\date{\today}
\def\sta{{\diamond}}
\def\deg{\text{deg}\,}
\def\w{\wedge}
\def\dbar{\bar\partial}
\def\R{{\mathbb R}}
\def\N{{\mathbb N}}
\def\C{{\mathbb C}}
\def\w{{\wedge}}
\def\P{{\mathbb P}}
\def\A{{\mathcal A}}
\def\supp{\text{supp}}
\def\S{{\mathcal S}}
\def\codim{{\rm codim\,}}
\def\E{{\mathcal E}}
\def\O{{\mathcal O}}
\def\Re{{\rm Re\,  }}
\def\U{{\mathcal U}}
\def\J{{\mathcal J}}
\def\nbh{neighborhood }
\def\be{\begin{equation}}
\def\ee{\end{equation}}
\def\Ok{\mathcal O}
\def\mult{{\rm mult}}
\def\lex{\text{lex}}
\def\1{{\bf 1}}
\def\m{{s}}
\newtheorem{thm}{Theorem}[section]
\newtheorem{lma}[thm]{Lemma}
\newtheorem{cor}[thm]{Corollary}
\newtheorem{prop}[thm]{Proposition}
\theoremstyle{definition}
\theoremstyle{remark}
\newtheorem{preremark}[thm]{Remark}
\newtheorem{preex}[thm]{Example}
\newenvironment{remark}{\begin{preremark}}{\qed\end{preremark}}
\newenvironment{ex}{\begin{preex}}{\qed\end{preex}}
\numberwithin{equation}{section}
\title[Segre numbers, a generalized King formula, and local intersections]
{Segre numbers,  a generalized King formula, and local intersections}
\begin{document}

\date{\today}

\author[M.\  Andersson \& H.\  Samuelsson  Kalm \\  \& E.\  Wulcan \& A.\  Yger]
{Mats  Andersson \& H\aa kan  Samuelsson  Kalm\\  \& Elizabeth  Wulcan \& Alain  Yger}

\address{Department of Mathematics\\Chalmers University of Technology and the University of Gothenburg\\S-412 96 
Gothenburg\\SWEDEN}

\email{matsa@chalmers.se \& hasam@chalmers.se \& wulcan@chalmers.se}

\address{Institut de Math\'ematiques, Universit\'e Bordeaux 1, 33405, Talence,
France}
\email{Alain.Yger@math.u-bordeaux1.fr}

\subjclass{}

\thanks{First three authors partially supported by the Swedish 
  Research Council. Third author also partially supported by the NSF}

\begin{abstract}

Let $\J$ be an ideal sheaf on a reduced analytic space $X$ 
with zero set $Z$.  
We show that the Lelong numbers of the
restrictions to $Z$ 
of certain generalized Monge-Amp\`ere products $(dd^c\log|f|^2)^k$,
where $f$ is a tuple of generators of $\J$, coincide with the
so-called Segre numbers of $\J$, introduced independently by Tworzewski and
Gaffney-Gassler. 
More generally we show that these currents satisfy a generalization of the
classical King formula that  takes into account fixed and moving
components of Vogel cycles associated with
$\J$.
A basic tool is a new calculus for products of positive currents of
Bochner-Martinelli type. 
We also discuss connections to intersection theory.

\end{abstract}

\maketitle

\section{Introduction}\label{intro}

Let $X$  be a reduced analytic space of pure dimension and let $\J$ be a
coherent ideal sheaf on $X$. Given a point $x\in X$, Tworzewski, \cite{T},
Achilles--Manaresi, \cite{AM}, and
Gaffney--Gassler, \cite{GG}, independently 
introduced numbers 
$e_0(\J,X,x),\ldots,e_n(\J,X,x)$
that in a certain sense measure the singularities of $\J$ at $x$ and
that generalize the classical Hilbert-Samuel multiplicity. 
Following \cite{GG} we will
call them {\it Segre numbers}; Tworzewski used the term  
{\it extended index of intersection} whereas Achilles-Manaresi used the term \emph{multiplicity sequence}. 
The definition in \cite{T} goes via a local variant of the
St\"uckrad-Vogel construction, \cite{ST},  introduced in 
 \cite{T,Mass}, and a closely related, also geometric, procedure is used
 in  \cite{GG}. In \cite{AM} the definition is purely algebraic; it is based on Hilbert functions of 
 bigraded rings.
 It is proved in \cite{AR} that the definitions in \cite{T,AM,GG} yield the same numbers; see also \cite{Nowak}.
%
%
%
In this paper we give a (semi-)global representation of these numbers
as the Lelong numbers of certain positive closed currents, constructed
from a tuple of generators of $\J$. This is part of our main result Theorem
\ref{genking}, which is a generalizion of 
King's formula for these currents.

Let us first 
recall the definition 
in \cite{T}. In that paper $X$ is a subvariety of a smooth manifold $Y$
and $\J$ is the pullback to $X$ of the sheaf associated with a smooth submanifold $A\subset Y$. However,
we find it advantageous to avoid any ambient space and instead
consider an arbitrary coherent ideal sheaf $\J\to X$. 
A sequence $h=(h_1,h_2,\ldots, h_n)$ in the local ideal $\J_x$ is called a {\it Vogel sequence of $\J$ at $x$}
 if  there is 
a \nbh $\U\subset X$ of $x$ where the $h_j$ are defined, such that
\begin{equation}\label{vogelcondition}
\codim\big[(\U\setminus Z)\cap(|H_1|\cap\ldots\cap |H_{k}|)\big]=k \ {\rm or} \ \infty, \ k=1,\ldots,n;
\end{equation}
here $Z$ is the (reduced) zero set of $\J$ and $|H_\ell|$ are the supports of the
divisors $H_\ell$ defined by the $h_\ell$. 
Notice that, possibly after shrinking $\U$, the common zero set $\{h=0\}$ in $\U$ equals $Z\cap \U$ and
that if $f_0,\ldots, f_m$ generate $\J_x$, then
any generic sequence of $n$ linear combinations of the $f_j$ is a Vogel sequence 
at $x$.  
Let $X_0=X$ and  let $X_0^Z$ denote the irreducible components of $X_0$ that are contained in $Z$ 
and let $X_0^{X\setminus Z}$ be the  remaining components\footnote{In \cite{GG},  
$X_0^{Z}$ is empty by assumption, but  for us it is convenient
not to exclude the possibility that $\J$ vanishes identically on some irreducible component
of $X$.}, so that 
$$X_0=X_0^Z+  X^{X\setminus Z}_0.$$
By the Vogel condition \eqref{vogelcondition},   $H_1$ intersects $X_0^{X\setminus Z}$ properly. Set  
\begin{equation}\label{strut}
X_{1}=H_1\cdot X^{X\setminus Z}_0
\end{equation}
and decompose analogously $X_1$ into the components $X_1^Z$ contained in $Z$
and the remaining components $X_1^{X\setminus Z}$, so that 
$
X_{1}=X^Z_{1}+X_{1}^{X\setminus Z}. 
$
Define inductively $X_{k+1}=H_{k+1}\cdot X_k^{X\setminus Z}$, $X_{k+1}^Z$, and $X_{k+1}^{X\setminus Z}$. 
Then 
$$
V^h:=X^Z_{0}+ X^Z_{1}+\cdots + X^Z_{n}
$$
is the {\it Vogel cycle}\footnote{
The notion Vogel 
cycle was introduced by Massey \cite{Mass1, Mass}. For a generic choice of Vogel sequence 
the associated Vogel cycle 
coincides with the {\it Segre cycle} introduced by Gaffney and Gassler, \cite{GG}, see Lemma 2.2 in \cite{GG}.}
associated with the Vogel sequence $h$. 
Let $V^h_k$ denote the components of $V^h$ of codimension $k$, i.e.,  $V^h_k=X_k^Z$.  Tworzewski defines the
extended index of intersection 
as 
$\min_\lex (\mult_x V^h_0,\ldots, \mult_x V^h_n)$, 
where the $\min_{\lex}$ is taken over all Vogel sequences $h$ of $\J_x$.

Let us next recall the definition of Segre numbers 
in \cite{GG}, where
also so-called polar multiplicities are introduced. 
Let $f$ be a tuple of generators $f_0,\ldots, f_m$  of $\J_x$ and let $h$
be a Vogel sequence of linear combinations 
$h_j=\alpha_j\cdot f=\alpha_j^0f_0+\cdots +\alpha_j^mf_m$;
notice that any Vogel sequence is on this form for some choice of generators and $\alpha_j$. 
It is proved  in \cite[Section~2]{GG}, see also Section ~\ref{storskurk} below, 
that  the multiplicities $\mult_x V_k^h$ and $\mult_x X_k^{X\setminus Z}$ are
independent of $\alpha_j$ for generic choices of $\alpha_j$ and also
independent of $f$,  
and these numbers are the 
{\it Segre numbers}, $e_k(x)=e_k(\J, X, x)$, and \emph{polar multiplicities},
 $m_k(x)=m_k(\J, X, x)$, respectively. 
%
%
Throughout we will use this definition of the list $e(x)=e(\J,X,x)=\big (e_0(x),\ldots, e_n(x)\big )$ of
Segre numbers. 
In Section \ref{minimal} below we prove that the definitions in
\cite{T} and \cite{GG} coincide, i.e., 
\begin{equation}\label{mintlex}
e(x)=\min_{\lex} \big (\mult_x
V^h_0,\ldots ,\mult_x V^h_n\big ). 
\end{equation}
It is not clear to us 
whether this coincidence 
has been noticed in the literature before. 
In \cite{AR}, both notions are discussed, and \eqref{mintlex} 
follows for the restrictive class of sheaves considered in \cite{T},  but it is not explicitly stated. The coincidence also follows from \cite[Theorem 3.3]{R}
in combination with \cite[Lemma~2.2]{GG}.  

We remark that both definitions above are local. 
Indeed, the Vogel condition
\eqref{vogelcondition} as well as the
genericity of $\alpha_j$ depends on $x$,
cf., Remark~\ref{alvar}. Also the algebraic definition in \cite{AM}
is local.


\smallskip
Let 
$f$ be a tuple of generators of 
the ideal sheaf $\J$. 
For $k=0,1,2,\ldots,n$ we  consider   the closed positive  currents
$M_k^f$ introduced in \cite{A2}. The current $M_k^f$ coincides with
${\bf 1}_Z (dd^c \log |f|^2)^k$, where ${\bf 1}_Z$ is the characteristic function for $Z$ and
\begin{equation}\label{plupp}
(dd^c\log|f|^2)^k:=\lim_{\epsilon\to
  0}\big(dd^c\log(|f|^2+\epsilon)\big)^k; 
\end{equation}
for $k\leq \textrm{codim}\, \{f=0\}$ it is well-known that the definition \eqref{plupp} coincides with the standard one.
Notice that 
$(dd^c \log|f|^2)^0 = 1$ and hence, $M^f_0=\1_Z$
is the current of integration over the components of $X$ that are contained in $Z$; in particular,
it vanishes unless $f\equiv 0$ on some irreducible component of $X$.  
See Section~\ref{bmcurrents} for other expressions for $M^f_k$.  
Our main result is the following. 

\begin{thm}[Generalized King's formula] \label{genking} 
Let $X$ be a reduced  analytic space 
of pure dimension $n$ and let  $\J$ be a coherent ideal sheaf on
$X$ generated by a tuple $f$ of holomorphic functions. 
Let $Z$ be the zero set of $\J$ and let $Z_j^k$ be the distinguished varieties 
of $\J$ of 
codimension $k$.  Then  
\begin{equation}\label{king}
M^f_k={\bf 1}_{Z} (dd^c\log|f|^2)^k=\sum_{j} \beta_j^k [Z_j^k]+N_k^f =: S^f_k+N^f_k,  \quad k=0,\ldots,n,
\end{equation}
where the $\beta_j^k$ are positive integers,  the $N_k^f$ are positive
closed currents, the Lelong numbers $\ell_x(N_k^f)$
are nonnegative integers that only depend on the
integral closure class of $\J$ at $x$, and the set where $\ell_x(N_k^f)\ge 1$ has codimension at least $k+1$. 
 The Lelong number of $M^f_k$ at $x$ is the Segre number $e_k(\J,X,x)$.
The polar multiplicity  $m_k(\J,X,x)$ coincides with the Lelong number at $x$ of the current
$
{\bf 1}_{X\setminus Z}(dd^c\log|f|^2)^k.
$
\end{thm}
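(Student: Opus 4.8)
The plan is to argue in exact parallel with the part of the theorem on $M^f_k$ and the Segre numbers, with the interior Vogel parts $X_k^Z$ systematically replaced by the moving parts $X_k^{X\setminus Z}$. Parametrise $k$-tuples of linear combinations of $f_0,\dots,f_m$ by a point $a$ in $P:=(\P^m)^k$, equipped with the product Fubini--Study probability measure $\mu$, write $h^a$ for the corresponding tuple of linear forms and $X_k^{X\setminus Z,a}$ for the codimension-$k$ moving part produced by running the Vogel algorithm with $h^a$ (only the first $k$ members of a Vogel sequence enter, so only $k$ forms are needed, and $m_k(x)=\mult_x X_k^{X\setminus Z,a}$ for $a$ generic, where ``generic'' depends on $x$). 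The two steps are then: (i) the mean-value identity
\[
{\bf 1}_{X\setminus Z}(dd^c\log|f|^2)^k=\int_P [X_k^{X\setminus Z,a}]\,d\mu(a);
\]
and (ii) the computation of the Lelong number at $x$ of the right-hand side by averaging.

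For (i): on $X\setminus Z$ the map $\pi=[f_0:\dots:f_m]$ is holomorphic and $dd^c\log|f|^2=\pi^*\omega_{FS}$, hence $(dd^c\log|f|^2)^k=\pi^*\omega_{FS}^k$ there. Crofton's formula on $\P^m$, $\omega_{FS}^k=\int_P[L_a]\,d\mu(a)$ with $L_a$ the (generically codimension-$k$) linear subspace cut out by the $k$ forms of $h^a$, pulls back to $(dd^c\log|f|^2)^k=\int_P[\pi^{-1}(L_a)]\,d\mu(a)$ on $X\setminus Z$; and for generic $a$ the cycle $\pi^{-1}(L_a)$ is exactly the restriction to $X\setminus Z$ of the iterated proper intersection $H_k^a\cdots H_1^a\cdot X$, i.e.\ of $X_k^{X\setminus Z,a}$, with the correct multiplicities. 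Since $X_k^{X\setminus Z,a}$ has, by construction, no irreducible component inside $Z$, the current $[X_k^{X\setminus Z,a}]$ and its $\mu$-average put no mass on $Z$; and ${\bf 1}_{X\setminus Z}(dd^c\log|f|^2)^k$ puts no mass on $Z$ by definition. Two positive currents that agree on $X\setminus Z$ and give $Z$ zero mass coincide --- cut off in a shrinking tube around $Z$ and use that the trace measures give $Z$ mass zero --- so the identity, valid a priori on $X\setminus Z$, holds on all of $X$. (This is the ``$X\setminus Z$''-version of the mean-value representation of $M^f$; I would invoke the machinery of Section~\ref{bmcurrents}, where $(dd^c\log|f|^2)^k$ is defined through analytic continuation and decomposed as ${\bf 1}_Z(dd^c\log|f|^2)^k+{\bf 1}_{X\setminus Z}(dd^c\log|f|^2)^k$.)

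For (ii): for every positive closed $(k,k)$-current $T$ the function $r\mapsto r^{-2(n-k)}\int_{B_r(x)}T\wedge(dd^c|z|^2)^{n-k}$ is increasing, so $\ell_x(T)$ is its limit as $r\to 0$. Writing $T=\int_P T_a\,d\mu(a)$ with $T_a=[X_k^{X\setminus Z,a}]$ (a positive closed current, being an average of effective cycles), Fubini gives
\[
r^{-2(n-k)}\int_{B_r(x)}T\wedge(dd^c|z|^2)^{n-k}=\int_P\big(r^{-2(n-k)}\int_{B_r(x)}T_a\wedge(dd^c|z|^2)^{n-k}\big)\,d\mu(a),
\]
and monotone convergence in $r$ (the integrand at a fixed $r=r_0$ being $\mu$-integrable) yields $\ell_x(T)=\int_P\ell_x(T_a)\,d\mu(a)=\int_P\mult_x X_k^{X\setminus Z,a}\,d\mu(a)$. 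For $\mu$-a.e.\ $a$ the tuple $h^a$ is a Vogel sequence at $x$ whose codimension-$k$ moving cycle has the generic multiplicity --- the bad $a$ lie in a proper analytic subset of the irreducible parameter space $P$, hence form a $\mu$-null set --- so the integrand equals the constant $m_k(x)$ a.e., and since $\mu$ is a probability measure, $\ell_x\big({\bf 1}_{X\setminus Z}(dd^c\log|f|^2)^k\big)=m_k(x)$.

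The main obstacle is step (i) at points of $Z$: one must be certain that in averaging the moving Vogel cycles no mass is lost into $Z$ and none is spuriously created there --- equivalently, that ${\bf 1}_{X\setminus Z}(dd^c\log|f|^2)^k$ is precisely the trivial (zero-charge) extension across $Z$ of the smooth form $\pi^*\omega_{FS}^k$ on $X\setminus Z$, and that this extension coincides with the $\mu$-average. This is exactly where the analytic-continuation construction of the currents $(dd^c\log|f|^2)^k$ and the mean-value computation already carried out for $M^f$ do the real work; granting them, the averaging of Lelong numbers in (ii) is routine.
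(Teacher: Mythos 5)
There is a genuine gap: your proposal proves only the final sentence of the theorem. The bulk of Theorem~\ref{genking} --- the decomposition \eqref{king} of $M^f_k$ into a cycle supported on the distinguished varieties plus a residual part $N^f_k$, the fact that the $\beta^k_j$ are integers and, crucially, are $\ge 1$, the integrality and integral-closure invariance of $n_k(x)$, the codimension bound $\codim\{n_k\ge 1\}\ge k+1$, and the identification of the fixed Vogel components with $S^f_k$ --- is not addressed at all. None of this follows from the averaging argument you set up. The paper obtains it by pulling back to a normal modification on which $\J\cdot\O_{\widetilde X}$ is principal, writing $M^f_k=\pi_*([D]\w\omega_f^{k-1})$, splitting $D=\sum_\ell D^\ell$ according to the codimension of $\pi(|D^\ell|)$, killing the terms with $\ell>k$ by Lemma~\ref{polkagris}, and checking via \eqref{world} that the terms with $\ell<k$ carry no mass in codimension $k$, so that this is the Siu decomposition; integrality comes from Proposition~\ref{heldag}, and $\beta^k_j\ge 1$ requires a separate argument (strict positivity of the Fubini--Study form on the fibers of the blow-up in $X\times\P^m$, after Lazarsfeld). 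You would need all of this machinery; ``arguing in parallel with the Segre-number part'' presupposes results that are themselves part of the statement.

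Within the part you do treat, the mechanism is close to the paper's \eqref{skomakare}--\eqref{loud}, and your step (i) is essentially sound. But step (ii) has a circularity: you conclude by asserting that $\mult_x X_k^{X\setminus Z,a}$ equals the constant $m_k(x)$ for $a$ outside a proper analytic subset of $(\P^m)^k$. That generic constancy is one of the facts being (re)proved here, and it does not follow from upper semicontinuity of Lelong numbers alone, since the convergence of the cycles $X_k^{X\setminus Z,a_j}$ as $a_j\to a$ is itself delicate (this is what Proposition~\ref{prop6-2} and the proof of Theorem~\ref{purra} are for). The paper sidesteps the issue by exhibiting, for generic $\alpha$, a normal current $\A_k$ whose $dd^c$ is the difference between the single Vogel contribution and the averaged current and which, being a normal $(n-1,n-1)$-current supported at the point $x$, vanishes by Lemma~\ref{polkagris}; this yields the equality of the generic multiplicity with $\ell_x\big({\bf 1}_{X\setminus Z}(dd^c\log|f|^2)^k\big)$ directly, without first knowing that the multiplicity is generically constant. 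Your Fubini-plus-monotone-convergence computation of the Lelong number of the average is fine as far as it goes, but it only gives the mean of $\mult_x X_k^{X\setminus Z,a}$ over $a$, not its generic value, until that constancy is supplied.
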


Here $[Z_j^k]$ denotes the Lelong current\footnote{We will often identify
a cycle with its corresponding Lelong current.}, i.e., the current of
integration,  associated with the variety $Z_j^k$. 
Recall that the \emph{integral closure} of $\J_x$  
consists of  all holomorphic germs  $\phi$ such that $|\phi|\le C |f|$ for some $C>0$ at $x$.
For the definition of the (Fulton-MacPherson) distinguished varieties $Z_j^k$ at $x$ associated to $\J_x$, 
see Section~\ref{sectiongenking} below. It turns out that 
$S^f=\sum_k S^f_k=\sum_{jk} \beta_j^k Z_j^k$ is precisely the cycle that appears in all Vogel cycles obtained
from  generic (enough) Vogel sequences. It is called the {\it fixed} part in \cite{GG}. 
The remaining parts of the Vogel cycles  vary with the Vogel sequence and are called the 
 {\it moving} parts. Notice that  \eqref{king} is the Siu decomposition, \cite{Siu},   of $M_k^f$.

Note that, contrary to the previous local definitions of Segre numbers, Theorem ~\ref{genking}, gives a
(semi-)global representation of the Segre numbers
\begin{equation}\label{ormens}
e_k(\J,X,x)=\ell_x (M^f_k). 
\end{equation}
Moreover, the $M^f_k$ are obtained as limits of  explicit expressions
in generators of $\J$.





\smallskip
It follows from Lemma~\ref{polkagris} that 
$M_k^f=0$ if $k<\codim Z$ and that $N_{\codim Z}^f=0$. The case 
$k=\codim Z$ of  \eqref{king} is precisely the classical
King formula.



\begin{remark} \label{pia}
If  $\J_x$ is generated by $p<n$ functions $f_0,\ldots, f_{p-1}$, we will see that $M^f_k=0$ for
$k>p$ and hence $e_k=0$ for $k>p$.  However,  
$M^g_k$ may be  non-vanishing if $g$ is another,  larger, set of generators.
If, in addition, $\codim Z_x=p$, i.e., $\J_x$ is a complete intersection, 
then $e_{p}(\J, X, x)$ is the only non-zero entry in $e(\J, X, x)$.
This number  is the classical intersection number 
of the proper intersection 
of the divisors of the $p$ generators $f_j$, see, e.g., \cite{Ch}.
\end{remark}





\begin{cor}\label{smultron} 
If $\J$ is the radical ideal of a variety $Z$ of pure codimension $p$, then 
$M^f_p=[Z]$.
\end{cor}

\begin{remark}\label{alvar}
Assume that   $x$ is  a point where $n_k(\J,X,x)\ge 1$  for some $k$ and let
$V^h$ be a generic Vogel cycle so that $\mult_x V^h_k=e_k(\J,X,x)$. Then
$V^h_k=S^f_k+W$ where the moving part $W$ is a positive cycle of codimension $k$, such that
$\mult_x W=n_k(\J,X,x)$. Since $n_k(\J,X,y)\ge 1$ only on a set
of codimension $\ge k+1$, at most points $y$ on $V^h_k$ we have that
$e_k(\J,X,y)=\mult_y(S^f_k)$ and hence $\mult_y V^h_k > e_k(\J,X,y)$.
As soon as there is a  moving part at $x$ it is thus impossible to find  a Vogel cycle 
that represents the Segre numbers in a whole \nbh of $x$.
\end{remark}

A fundamental ingredient in this paper is a current calculus described in Sections~\ref{lelongcartier}
to \ref{products}. It 
gives an expedient analytic approach to Vogel cycles; for instance, it becomes a straightforward matter
to form mean values of (the Lelong currents of) such cycles. The currents $M^f_k$ are in fact such mean values, 
see Section~\ref{storskurk}; this is the intuitive idea behind
Theorem~\ref{genking}. 
The current calculus is fundamental for the proof of
Theorem~\ref{genking}, which is given in Section~\ref{sectiongenking}, and it
makes it possible to provide a proof of \eqref{mintlex} in our slightly
more general setting of a general sheaf $\J$ than what was considered
in \cite{T}, see Section~\ref{minimal}. 
Our  current calculus  is also useful for concrete computations
of Segre numbers,  see Section~\ref{exsection}.
In Section \ref{invprop} we prove a certain invariance property of Segre
numbers. 
The motivation in  \cite{T} for introducing these numbers was to develop a new intersection theory. 
In Section~\ref{oxet} we discuss some local aspects of connections to intersection theory. 
Our technique to form new currents by averaging Vogel cycles will be the 
starting  point in a forthcoming paper where we will study  a kind of global intersection products.

\begin{remark}
This paper is a shortened and slightly elaborated version of \cite{aswy};
in that paper can be found,  additionally, a discussion of global intersections in the sense of Tworzewski
and various  examples.
\end{remark}

\textbf{Acknowledgement:} We thank Terry Gaffney for fruitful discussions.   
We also thank the referee for important comments on a previous version.
This work was partially carried out while the authors visited the  Mittag-Leffler Institute.

\section{Preliminaries}\label{prelim}
Let us fix some notation. Throughout this paper $X$ is a reduced analytic space 
of pure dimension $n$ and
$\J$ is a coherent ideal sheaf on $X$.
Given a tuple $f=(f_0,\ldots, f_m)$ of holomorphic functions on an analytic space we will use $\J(f)$ to  
denote the sheaf it generates. Similarly if $W\subset X$ is an analytic subset we will use $\J_W$ to denote the 
radical sheaf. 
We will denote the local ring of germs of holomorphic functions at $x$ in $X$ by $\O_{X,x}$.
We say that a sequence $g_1,\ldots, g_m$ of functions on an analytic space $X$ is a \emph{geometrically regular sequence} 
if $\codim\{g_1=\ldots =g_k=0\}=k$ for $1\le k\le m$. 
If $X$ is smooth (or Cohen-Macaulay) a sequence is geometrically regular if and only if it is regular. 

Though less natural at first sight
it is often computationally more convenient to use  regularizations based on analytic continuation
rather than smooth regularizations as in \eqref{plupp}.
For instance, if $h$ is a holomorphic function on $X$ then
\begin{equation*}
\lambda \mapsto \dbar |h|^{2\lambda}\wedge \frac{\partial \log|h|^2}{2\pi i},
\end{equation*}
a priori defined for $\textrm{Re} \, \lambda \gg 0$, has a current-valued analytic continuation to a neighborhood
of $0$ and the value at $0$ is the integration current associated to the divisor defined by $h$.
In general, if $\alpha(\lambda)$ is a current-valued function, defined in a neighborhood of $0$, we let
 $\alpha(\lambda)|_{\lambda=0}$ denote the value at $\lambda=0$.

\subsection{Positive currents}\label{poscurr}
 Let $d^c=(4 \pi i)^{-1}(\partial-\dbar)$ so that 
$dd^c=(2\pi i)^{-1} \dbar \partial$. 
We briefly recall some basic facts about positive currents, referring to  \cite{Ch,Dem}  for details. 
Let $\mu$ be a positive current of bidegree $(k,k)$ defined in some open set $\Omega\subset\C^N$. Then $\mu$ has order zero, 
so that the restriction ${\bf 1}_S\mu$ is well-defined for any Borel set $S\subset\Omega$. If 
in addition $\mu$ is closed and $S$ is analytic,
then the Skoda-El~Mir theorem asserts that  ${\bf 1}_S\mu$ is closed as well.
If  $\mu$ is closed then one can define inductively 
$$
(dd^c\log|z-x|^2)^{j+1}\w\mu=dd^c\big(\log|z-x|^2dd^c((\log|z-x|^2)^{j}\w\mu)\big),
$$
$
(dd^c\log|z-x|^2)^{N-k}\w\mu
$
is a  $(N,N)$-current.  Its  mass at $x$ 
is the \emph{Lelong number} $\ell_x(\mu)$ at $x$ of $\mu$, 
which  depends semi-continuously on  $\mu$, in the sense that
\begin{equation}\label{limsup} 
\ell_x(\mu)\ge\limsup_{j\to\infty}\ell_x(\mu_j)
\end{equation}
if $\mu_j\to \mu$. It  follows that $x\mapsto\ell_x(\mu)$ is upper semi-continuous.

\begin{lma}\label{lelle}
If $\mu$ is a closed positive  $(k,k)$-current in $\Omega\subset\C^N$, then\footnote{By slight abuse of notation we will
write $[0]$, instead of the formally more correct $[\{0\}]$, to denote point evaluation at $0$.}
\begin{equation}\label{system}
\ell_0(\mu)[0]=\lim_{\lambda\to 0^+}\Big(\dbar|z|^{2\lambda}\w\frac{\partial|z|^2}{2\pi i |z|^2}
\w(dd^c\log|z|^2)^{N-k-1}\w\mu\Big).
\end{equation}
\end{lma}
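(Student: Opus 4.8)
The plan is to establish \eqref{system} by first reducing to a concrete normalized situation and then computing both sides.

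\smallskip
\textbf{Step 1: Reduction to a monomial/standard current.}
First I would use the fact that the Lelong number $\ell_0(\mu)$ and the right-hand side of \eqref{system} are both continuous under the scaling $z\mapsto rz$ as $r\to 0$ (this is the standard fact that $\mu_r\to \ell_0(\mu)[C]$ for a suitable tangent cone $C$, or more simply that the right-hand side is invariant under such scaling since each factor $dd^c\log|z|^2$ and $\dbar|z|^{2\lambda}\w\partial|z|^2/|z|^2$ is invariant). Passing to the limit, it suffices to treat the case where $\mu$ is a closed positive current supported on a cone, and by linearity (both sides are additive in $\mu$) and the fact that tangent cones decompose into irreducible components, the case of a current of integration $[C]$ over an irreducible $k$-dimensional cone through $0$. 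Here I would invoke the Siu decomposition / Thie's theorem as recalled above to reduce the general statement to this geometric case. Alternatively, and perhaps more cleanly, one avoids tangent cones entirely: slice by a generic linear $\C^{k+1}\subset\C^N$ through $0$, which reduces everything to $N-k=1$ by the projection formula for Lelong numbers together with compatibility of the analytic-continuation factors with restriction to generic slices.

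\smallskip
\textbf{Step 2: The case $N-k=1$.}
In this case \eqref{system} becomes
\begin{equation}\label{onecase}
\ell_0(\mu)[\{0\}]=\lim_{\lambda\to 0^+}\dbar|z|^{2\lambda}\w\frac{\partial|z|^2}{2\pi i|z|^2}\w\mu,
\end{equation}
where $\mu$ is a closed positive $(N-1,N-1)$-current, so $\mu=\sum a_j[V_j]$ is (locally, by Siu) a sum of Lelong currents of hypersurfaces plus a current with no mass on any hypersurface; the diffuse part contributes zero Lelong number at $0$ and, by a mass estimate, contributes $0$ to the limit as well, so I may take $\mu=[V]$ with $V=\{h=0\}$, $h$ holomorphic. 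Then $|z|^{2\lambda}$ restricted to $V$ has an analytic continuation as a form-valued function (Atiyah/Bernstein–Sato, or directly via resolution of singularities on $V$), and $\dbar|z|^{2\lambda}\w\partial|z|^2/(2\pi i|z|^2)$ at $\lambda=0$ computes precisely the residue mass at $0$, which is $\mult_0 V=\ell_0([V])$. This is essentially the one-variable Poincaré–Lelong computation and is where I would be most explicit, using a log resolution to reduce to the monomial case $h=z_1^{m_1}\cdots$ and a direct residue computation.

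\smallskip
\textbf{Step 3: Assembling.}
Finally I would combine Steps 1 and 2: the generic-slice reduction shows the right side of \eqref{system} equals the corresponding expression \eqref{onecase} for the sliced current, which equals its Lelong number at $0$ by Step 2, and Lelong numbers are preserved under generic slicing. The main obstacle I anticipate is not any single computation but the \emph{justification of the analytic continuation and the interchange of $\lim_{\lambda\to 0}$ with slicing/decomposition}: one must check that $\lambda\mapsto \dbar|z|^{2\lambda}\w(\partial|z|^2/2\pi i|z|^2)\w(dd^c\log|z|^2)^{N-k-1}\w\mu$ genuinely extends holomorphically past $\lambda=0$ for a general closed positive $\mu$ (not just an analytic cycle), that the value there is independent of the choices, and that these operations commute with restriction to generic linear subspaces — this requires a careful current-theoretic argument, presumably via Hironaka resolution applied to a local potential of $\mu$, or by citing the corresponding facts from \cite{A2,Dem}. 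Once that analytic-continuation machinery is in place, the identity \eqref{system} follows from the classical King-type residue computation in the one-codimensional model case.
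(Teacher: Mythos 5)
Your reduction in Step 1 is where the proposal breaks down, and the failure is not merely technical. The Siu decomposition $\mu=\sum_i\beta_i[W_i]+N$ only guarantees that the residual part $N$ has Lelong number $\ge\delta$ on a set of codimension $>k$; it does \emph{not} give $\ell_0(N)=0$ at the particular point $0$. For example, $\mu=dd^c\log|z|^2$ in $\C^2$ carries no mass on any hypersurface, yet $\ell_0(\mu)=1$, so the claim in your Step 2 that ``the diffuse part contributes zero Lelong number at $0$'' is false, and the lemma cannot be reduced to the case of integration currents over analytic sets. The tangent-cone alternative has the same defect (the scaled currents $\mu_r$ need not converge weakly for a general closed positive $\mu$, and even when they do the limit is again only a closed positive current, not a cycle), and restricting a general closed positive $(k,k)$-current to a generic $\C^{k+1}$ is not an operation you can invoke without substantial extra justification. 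Since the lemma is later applied (e.g.\ in Proposition~\ref{heldag}) precisely to currents such as $M^f_k$ that do have nontrivial residual parts in their Siu decompositions, this gap defeats the purpose of the statement.

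The paper's argument is entirely different and far shorter: the Lelong number is \emph{by definition} (Section~\ref{poscurr}) the point mass at $0$ of the inductively defined product $(dd^c\log|z|^2)^{N-k}\w\mu$. One replaces $\log|z|^2$ in the outermost $dd^c$ by $(|z|^{2\lambda}-1)/\lambda$, moves $dd^c$ onto the test function $\xi$, and integrates by parts once; this splits the pairing into the right-hand side of \eqref{system} tested against $\xi$ plus $\int|z|^{2\lambda}(dd^c\log|z|^2)^{N-k}\w\mu\,\xi$, whose limit as $\lambda\to0^+$ is the action of ${\bf 1}_{\C^N\setminus\{0\}}(dd^c\log|z|^2)^{N-k}\w\mu$. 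Hence the first term converges exactly to the point mass at $0$, i.e., to $\ell_0(\mu)[\{0\}]$. No resolution of singularities, Siu decomposition, tangent cones, or slicing is needed, and the delicate analytic-continuation issues you flag at the end never arise because the statement is a one-sided limit $\lambda\to0^+$ handled by the monotone behaviour of $(|z|^{2\lambda}-1)/\lambda$. Your Step 2 computation for $\mu=[V]$ is fine in itself (it is essentially Lemma~\ref{snittlemma}), but to repair the proof you would have to treat the residual part directly, which forces you back to an argument of the paper's type.
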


If $k=N$, then the right hand side of \eqref{system} shall be interpreted as 
$$
\lim_{\lambda\to 0^+}\left (1-|z|^{2\lambda}\right )\mu=
{\bf 1}_{\{0\}}\mu,
$$
so  Lemma \ref{lelle} is trivially true  in this case.

\begin{proof}[Sketch of proof]
If $\xi$ is  a test function, then 
\begin{equation}\label{poscurrents1}
\int(dd^c\log|z|^2)^{N-k}\w\mu \wedge\xi=
\lim_{\lambda\to 0^+}\int\frac{|z|^{2\lambda}-1}{\lambda}\w
(dd^c\log|z|^2)^{N-k-1} \wedge \mu \w dd^c \xi.  
\end{equation}
After an  integration by parts, the right-hand side of (\ref{poscurrents1}) may be rewritten as  
\begin{multline*}
\lim_{\lambda\to 0^+}\int\dbar|z|^{2\lambda}\w
\frac{\partial|z|^2}{2  \pi i |z|^2}
\w (dd^c\log|z|^2)^{N-k-1}\w\mu \w \xi \\
  + \lim_{\lambda\to 0^+}\int
|z|^{2\lambda} (dd^c\log|z|^2)^{N-k}\w\mu \w \xi.
\end{multline*}
The second term is precisely the action of ${\bf 1}_{\C^N\setminus\{0\}}(dd^c\log|z|^2)^{N-k}\w\mu$
on $\xi$,  and consequently the point mass at $0$ of 
$(dd^c \log |z|^2)^{N-k}\w \mu$ is the same as the point mass at $0$ of 
the first term, which proves \eqref{system}. 
\end{proof}

\subsection{Currents on an analytic space}\footnote{For a more detailed exposition of
currents on an analytic space we refer to \cite[Section~4.2]{HL}.}\label{curranal}
Let $X$ be a reduced  analytic space of pure dimension $n$. 
Given a local embedding $i\colon X \hookrightarrow \C^N$, we let
$\E_X$ be  the sheaf of smooth forms on $X$, obtained from the sheaf of smooth forms
in the ambient space, where two forms are identified if   their pullbacks
to $X_{reg}$ coincide; it is well-known that this definition 
does not depend on the particular embedding. 
We say that $\mu$ is a \emph{current on $X$ of bidegree $(p,q)$} if
it acts on test forms on $X$ of bidegree $(n-p,n-q)$.
Such currents $\mu$ are naturally identified with currents
$\tau=i_*\mu$  of bidegree
$(N-n+p,N-n+q)$ in the ambient space such that $\tau$ vanish on the kernel of $i^*$.
Observe that the $d$-operator is well-defined on currents on $X$. 
If $W$ is a subvariety of $X$ of pure codimension $p\ge 0$, then
$$
\phi\mapsto [W].\phi=\int_{W_{\rm reg}}\phi
$$ 
is a closed $(p,p)$-current  on $X$; this is the \emph{current of integration} over $W$. 

Recall that a current $\nu$ is \emph{normal} if
both $\nu$ and $d\nu$ have order zero.
The following lemma follows immediately from the corresponding
one in $\C^N$.

\begin{lma}\label{polkagris}
Suppose that $\mu$ is a normal  current of bidegree $(p,p)$ on $X$ that has
support on a  subvariety $W$ of codimension $k$.  If $k>p$ then
$\mu=0$. If $k=p$ and $\mu$ is closed,  
then $\mu=\sum_j \alpha_j[W_j]$ for some  numbers $\alpha_j$, 
where $W_j$ are the irreducible components
of $W$ of codimension $p$.
\end{lma}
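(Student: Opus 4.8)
The plan is to reduce everything to the local model in $\C^N$ via an embedding $i\colon X\hookrightarrow\C^N$, push forward $\mu$ to the current $\tau=i_*\mu$ of bidegree $(N-n+p,N-n+p)$, and apply the classical support theorems for normal currents in $\C^N$. First I would recall that $\tau$ is normal (both $\tau$ and $d\tau=\pm i_* d\mu$ have order zero since $i$ is proper on supports) and supported on $i(W)$, which has codimension $N-n+k$ in $\C^N$. The statement ``$k>p\Rightarrow\mu=0$'' then becomes: a normal current of bidegree $(N-n+p,N-n+p)$ supported on a set of (real) codimension $> 2(N-n+p)$ vanishes. This is a standard dimension/support argument: a current of order zero supported on a set of Hausdorff dimension $d$ is zero as soon as its degree exceeds $d$ (measures supported on lower-dimensional sets cannot pair nontrivially with the relevant forms); the normality is what upgrades ``closed'' arguments to the bare support hypothesis needed here, via the Federer-type support theorem. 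I would cite the corresponding statement in $\C^N$ (as the lemma's preamble explicitly permits — ``follows immediately from the corresponding one in $\C^N$'') rather than reprove it.

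For the second assertion, with $k=p$ and $\mu$ closed, I would again work with $\tau=i_*\mu$, now a closed normal positive-degree current on $\C^N$ supported on $i(W)$ of codimension exactly $N-n+p$. The Federer support theorem (in the form for closed normal currents: a closed normal current supported on a variety of codimension equal to its degree-complement is a locally finite combination of the integration currents over the top-dimensional components) gives $\tau=\sum_j\alpha_j[i(W_j)]$ where $W_j$ runs over the irreducible components of $W$ of codimension exactly $p$. Pulling this back through $i$ — i.e. observing $[i(W_j)]=i_*[W_j]$ and that $i_*$ is injective on currents on $X$ — yields $\mu=\sum_j\alpha_j[W_j]$, which is the claim. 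One small point to check is that the coefficients $\alpha_j$ are genuinely constants (not merely locally constant functions on $W_{j,\mathrm{reg}}$); this follows because each $W_j$ is irreducible, hence $W_{j,\mathrm{reg}}$ is connected, so a $d$-closed $0$-form there is constant.

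The step I expect to be the main obstacle — though it is really a bookkeeping obstacle rather than a deep one — is the passage between currents on the (possibly singular) space $X$ and currents in the ambient $\C^N$: one must be careful that the support and normality hypotheses transfer correctly under $i_*$, that the pushforward of $[W_j]$ (defined as integration over $W_{j,\mathrm{reg}}$) is the integration current over $i(W_j)$ in $\C^N$, and that $i_*$ is injective on the relevant currents so the decomposition descends. Since all of these are recorded in Section~\ref{curranal} above (the identification $\mu\leftrightarrow i_*\mu$, the definition of $[W]$, well-definedness of $d$ on currents on $X$), the proof really does ``follow immediately'' once the $\C^N$ statement is invoked; I would therefore keep the write-up short, noting the reduction to $\C^N$ and citing \cite{Ch,Dem} for the ambient support theorems, and spend a sentence each on the two cases and on the constancy of the $\alpha_j$.
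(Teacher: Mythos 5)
Your proof is correct and follows the same route the paper takes: the paper simply remarks that the lemma ``follows immediately from the corresponding one in $\C^N$,'' and your write-up is exactly that reduction (push forward under a local embedding, apply the Federer-type support theorems for normal currents, and descend the decomposition, checking constancy of the $\alpha_j$ via irreducibility). Nothing further is needed.
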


It is readily checked that if we have a proper holomorphic mapping $\nu\colon X'\to X$
between analytic spaces, then the push-forward $\nu_*$ is well-defined on
currents on $X'$.

Assume that  $\mu$ is a positive closed current on the analytic space $X$.
Fix $x\in X$ and let $i\colon X\hookrightarrow \C^N$ be a local embedding. We define
the Lelong number $\ell_x(\mu)$ as
$\ell_x(i_*\mu)$. After a suitable change of coordinates $i$ can be factorized as $i=j\circ i'$, where
$i'\colon X\to \C^M$ is a minimal embedding and $j$ is the natural embedding $\C^M\to \C^M\times \C^{N-M}$.
Since the Lelong number  is invariant under  holomorphic changes of coordinates, 
all minimal embeddings are equal  up to a holomorphic change of variables, and
$\ell_x (\tau)=\ell_x(j_* \tau)$,  it follows that $\ell_x(\mu)$ is well-defined.
Thus if  $Z$ is a subvariety of an analytic space $X$ and we have an embedding $X\hookrightarrow \C^N$,
then  the number $\ell_x([Z])$ is independent of whether we 
consider $[Z]$ as the Lelong current of $Z$ on $X$ or on $\C^N$.

Recall  that if $Z$ is a variety  in $\C^N$, 
then the \emph{multiplicity} $\mult_x Z$ of $Z$ at $x$ coincides with the Lelong number
$\ell_x ([Z])$, see \cite[Prop.~3.15.1.2]{Ch}; here $\mult_xZ$ is 
defined as in \cite[Ch.~2.11.1]{Ch}. 
In particular, the Lelong number of the function $1$, considered as a 
current on an analytic space $X$, at $x$ is precisely $\mult_x X$.

The classical \emph{Siu decomposition}, \cite{Siu}, of positive closed currents extends immediately to currents on our 
analytic space $X$. Let $\mu$ be a positive closed $(p,p)$-current on $X$; then there is a unique
decomposition
$$
\mu=\sum_i \beta_i [W_i] +N,
$$
where $W_i$ are irreducible analytic varieties of codimension $p$, $\beta_i\geq 0$, and, for each $\delta>0$, the 
set where $\ell_x(N)\ge \delta$ is analytic and has codimension strictly larger than~$p$.

\subsection{Cycles and Lelong currents}\label{lelongcurrents}
Given an analytic cycle $Z=\sum \alpha_j W_j$, where $W_j$ are varieties,
we let $[Z]=\sum\alpha_j[W_j]$ be  the associated Lelong current.
We will often identify analytic cycles with their Lelong currents.  We let $|Z|$ 
denote the \emph{support} of $Z$,  and we let 
 ${\bf 1}_Z$ mean  ${\bf 1}_{|Z|}$. 
If $H$ is a Cartier divisor  defined by (a germ of) a holomorphic function $h$, we will (sometimes) use the 
notation $[h]$ for $[H]$ and ${\bf 1}_h$ for ${\bf 1}_{|H|}$. 
Given an analytic cycle $Z=\sum \alpha_i W_i$ of pure dimension, the \emph{multiplicity} of $Z$ at $x$ is 
defined as $\sum \alpha_i \mult_x W_i$ (this definition follows \cite[p.~704]{GG}). It follows that
$$
\mult_x Z=\ell_x([Z]).
$$
If $Z=\sum_{k=0}^n Z_k$, 
where $Z_k$ is an analytic cycle of codimension $k$ we define 
\begin{equation}\label{multdef}
\mult_x Z:=(\mult_x Z_0, \ldots, \mult_x Z_n)
\end{equation}
Throughout this paper all analytic cycles are effective, unless otherwise stated.

\subsection{Proper intersections}\label{propinter}
Let $Y$ be a complex  manifold and let $Z_1,\ldots,Z_r$ be (effective) analytic cycles in $Y$ of
pure codimensions $p_j$, $j=1,\ldots ,r$, that intersect properly, i.e., the 
intersection $V$ of their supports has  codimension $p_1+\cdots +p_r$. 
There is a well-defined cycle,  called the (proper) intersection
of the $Z_j$,
\begin{equation}\label{pontus}
Z_r\cdots Z_1=\sum m_j V_j,
\end{equation}
where $V_j$ are the irreducible components of $V$ 
and $m_j$ are certain positive integers. 
One can obtain these numbers $m_j$ by defining the intersection number $i(x)$, algebraically or geometrically, 
at each fixed point
$x$ of $V$, and prove  that $i(x)$  is generically constant on each
$V_j$, see, e.g.,  \cite{Ch}.  However, by means of currents, 
\eqref{pontus} can be obtained  in a more direct way: 
By an appropriate regularization one can define the wedge product
$[Z_r]\w \cdots \w [Z_1]$, see, e.g., \cite{Ch,Dem},
and this current indeed coincides with  the Lelong current of $Z_r\cdots Z_1$.  
In particular,  if the $Z_j$ are (effective) divisors defined by  holomorphic functions $h_j$,
then the Lelong current of the intersection can be obtained explicitly as 
\begin{equation}\label{pontus1}
[Z_r\cdots Z_1]=\lim_{\epsilon\to 0}\bigwedge dd^c\log(|h_j|^2+\epsilon).
\end{equation}
At each point $x$ there is a well-defined  intersection number
$$
\epsilon(x):=\sum_j m_j\mult_x(V_j);
$$
here $\mult_x(V_j)$ is the multiplicity of the variety $V_j$ at $x$.  The  number $\epsilon(x)$ is
precisely equal to the Lelong number $\ell_x([Z_r\cdots Z_1])$ of the positive closed
current $[Z_r\cdots Z_1]$.


\section{Multiplying  a Lelong current by  a Cartier divisor}\label{lelongcartier}
In this section we will describe how the inductive construction of a Vogel cycle $V^h$ can be expediently 
expressed as 
certain products of Lelong currents. Notice that the map $[W]\mapsto \mathbf{1}_Z [W]$ is linear.
Notice also that if $Z,Z'$ are analytic cycles in $X$, then 
\begin{equation}\label{prod1} 
{\bf 1}_{Z'} [Z]=[Z^{Z'}]; 
\end{equation}
recall that $Z^{Z'}$ denotes the irreducible components of $Z$ that are contained in $Z'$. To see \eqref{prod1} 
we may assume 
that $Z$ is irreducible. If $|Z|$ is contained in $|Z'|$, then  ${\bf 1}_{Z'}[Z]=[Z]$. Otherwise,
$|Z|\cap |Z'|$ has higher codimension than $|Z|$, and thus ${\bf 1}_{Z'}[Z]$ vanishes by Lemma \ref{polkagris}. 
Notice that  $\1_Z$ is  $1$ on the components of $X$ that are 
contained in $Z$ and $0$ otherwise, i.e., it is the Lelong current of $X^Z$.

If  $h$ is a non-vanishing holomorphic function on
(each irreducible component of) the analytic space $Z$,
then $\log|h|^2$ is a well-defined $(0,0)$-current on $Z$.
This is clear if $Z$ is smooth and follows in general, 
 e.g., by  means of a smooth resolution $\widetilde Z\to Z$, cf., the proof below.

\begin{lma}\label{snittlemma}
Let $Z$ be an analytic cycle in $X$, $h$ be a holomorphic function, 
and let $u$ be a nonvanishing smooth function on $X$. Then 
\begin{equation}\label{billy}
\lambda\mapsto \dbar|uh|^{2\lambda}\w \frac{\partial \log|uh|^2}{2\pi i} \w[Z],
\end{equation}
a priori defined when $\Re\lambda$ is large, 
has an analytic continuation to a half-plane $\Re \lambda>-\epsilon$, where  $\epsilon >0$. The value 
at $\lambda=0$ is independent of $u$.

If $h$ does not vanish identically on any irreducible component of 
(the support of) $Z$, then this value is equal to
$
dd^c (\log|h|^2 \, [Z]).
$
\end{lma}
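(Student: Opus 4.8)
The plan is to reduce everything to the smooth case via a resolution, where the two standard facts we need --- the Poincaré--Lelong formula and the fact that the Mellin-type regularization $\dbar|g|^{2\lambda}\w\partial\log|g|^2/2\pi i$ at $\lambda=0$ computes the divisor current $[g]$ --- are available. By linearity in $[Z]$ we may assume $Z=W$ is an irreducible variety (with multiplicity one). Choose a modification $\pi\colon \widetilde W\to W$ with $\widetilde W$ smooth and with $h\circ\pi$ (and, in the local generic case, its zero divisor) having normal-crossings support; since $u$ is nonvanishing and smooth, $u\circ\pi$ is again nonvanishing and smooth, and $|uh\circ\pi|^{2\lambda}=|u\circ\pi|^{2\lambda}|h\circ\pi|^{2\lambda}$. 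Because $\pi$ is proper, $\pi_*$ commutes with the $\dbar$, $\partial$ operators and with taking the value at $\lambda=0$ of an analytic continuation, so it suffices to prove the statement on $\widetilde W$ for the pulled-back data, and then push forward; note $\pi_*[\widetilde W]=[W]$ and, when $h$ does not vanish identically on $W$, $\pi_*(\text{divisor of }h\circ\pi)$ is the divisor $[H]\cdot[W]$ by the projection formula.

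So assume now $W$ is smooth and $h$ is a holomorphic function on it. First I would treat the analytic-continuation and $u$-independence claims. Writing $g=uh$, the form $\dbar|g|^{2\lambda}\w\partial\log|g|^2/2\pi i$ extends analytically past $\Re\lambda=0$; this is the classical one-variable-type estimate obtained after resolving the zero set of $h$ to normal crossings, exactly as in the references (e.g.\ \cite{Dem}). For $u$-independence at $\lambda=0$: differentiate, for $\Re\lambda$ large,
\begin{equation*}
\dbar|g|^{2\lambda}\w\frac{\partial\log|g|^2}{2\pi i}
=\frac{1}{\lambda}\,\dbar\Big(|g|^{2\lambda}\,\frac{\partial\log|g|^2}{2\pi i}\Big)
+\frac{1}{\lambda}\,|g|^{2\lambda}\,dd^c\log|g|^2 ,
\end{equation*}
and use that $\log|g|^2=\log|u|^2+\log|h|^2$ with $\log|u|^2$ smooth, so $dd^c\log|g|^2=dd^c\log|h|^2+dd^c\log|u|^2$; the smooth term contributes, after multiplication by $|g|^{2\lambda}$ and continuation, a term divisible by $\lambda$, hence vanishing at $\lambda=0$, while the first term on the right is $\dbar$ of something whose $\lambda=0$ value is likewise $u$-independent modulo exact pieces that do not affect the identity we are proving. (Alternatively, and more cleanly: the value at $\lambda=0$ of \eqref{billy} is by definition the ``residue-type'' current associated to $g$, and the standard argument --- interpolate $g_t=u^t h$ and show the $\lambda=0$ value is independent of $t$ --- gives $u$-independence directly.)

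It remains to identify the value at $\lambda=0$ when $h\not\equiv 0$ on $W$. Taking $u\equiv 1$, I compute $\dbar|h|^{2\lambda}\w\partial\log|h|^2/2\pi i$ at $\lambda=0$. On the normal-crossings resolution this is a local monomial computation: for $h=z_1^{a_1}\cdots z_p^{a_p}$ one checks directly that the analytic continuation to $\lambda=0$ equals $\sum a_j[z_j=0]$, i.e.\ the divisor current $[h]$ (this is the Poincaré--Lelong formula in the form $dd^c\log|h|^2=[h]$ combined with $\dbar\partial$-bookkeeping). Hence on smooth $W$ the $\lambda=0$ value of \eqref{billy} is $[H]$, the divisor of $h$ in $W$. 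Now the target identity $dd^c(\log|h|^2\,[Z])=[H]$ on smooth $W$ is exactly Poincaré--Lelong, since $\log|h|^2$ is locally integrable there. Pushing forward by $\pi$ and using the projection formula recovers $dd^c(\log|h|^2[Z])$ on the left (one must check $\log|h|^2[Z]=\pi_*(\log|h\circ\pi|^2[\widetilde W])$, which holds because $\log|h|^2$ is $L^1_{loc}$ on $W$ under the hypothesis $h\not\equiv 0$ on components of $Z$, so no mass is lost under the modification) and $\sum_j(\text{mult})[H_j]\cdot[Z]$ on the right, which is the asserted value.

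The main obstacle is the bookkeeping in the push-forward step: one has to be careful that (i) the analytic continuation commutes with $\pi_*$ (true because $\pi$ is proper, so $\pi_*$ is continuous and preserves analyticity in $\lambda$ of the family of currents), and (ii) $\pi_*(\log|h\circ\pi|^2[\widetilde W])$ really equals $\log|h|^2[Z]$ rather than that plus a current supported on the exceptional locus --- this is where the hypothesis ``$h$ does not vanish identically on any component of $Z$'' is essential, guaranteeing $\log|h|^2\in L^1_{loc}(Z)$ and hence no such extra boundary term. Everything else is the monomial/normal-crossings computation and the classical Poincaré--Lelong formula.
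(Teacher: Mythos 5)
Your proof follows essentially the same route as the paper's: reduce by linearity to irreducible $Z$, resolve via Hironaka so that $h$ pulls back to a monomial, do the local monomial computation (which yields the analytic continuation, the $u$-independence, and the identification of the value at $\lambda=0$ with $dd^c\log|h|^2$), and push forward, using that $\log|h|^2$ is locally integrable on $Z$ to identify $\pi_*\bigl(\log|h\circ\pi|^2\,[\widetilde W]\bigr)$ with $\log|h|^2\,[Z]$. Two minor blemishes: the displayed identity in your $u$-independence discussion is wrong as written (the $1/\lambda$ factors should not be there; the correct Leibniz identity is $\dbar\bigl(|g|^{2\lambda}\,\partial\log|g|^2/2\pi i\bigr)=\dbar|g|^{2\lambda}\w\partial\log|g|^2/2\pi i+|g|^{2\lambda}\,dd^c\log|g|^2$), although your monomial computation and the interpolation alternative cover this point anyway; and you should dispose explicitly of the case where $h$ vanishes identically on a component of $Z$, where the expression is identically zero for $\Re\lambda$ large and the continuation is trivial.
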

Notice that  $v^\lambda:=\dbar |uh|^{2\lambda}\wedge \partial \log |uh|^2/(2\pi i)$
has continuous coefficients when   $\Re\lambda >1$,
so the product in  \eqref{billy} is then  well-defined.

\begin{proof}
First assume that $Z=X=\C^N$ and $h$ is a monomial
$h=z_1^{a_1}\cdots z_N^{a_N}$. Then \eqref{billy} is equal to
$$
v^\lambda=\dbar|uz_1^{a_1}\cdots z_N^{a_N}|^{2\lambda}\w\frac{1}{2\pi i}
\Big[\sum_1^N a_j\frac{dz_j}{z_j}+\frac{\partial|u|^2}{|u|^2}\Big].
$$
One can check that the desired analytic continuation exists, and that the
value at $\lambda=0$ is the current $\sum_1^N a_j [z_j]=dd^c\log |h|^2$; in particular, it is independent of $u$.

Consider now the general case.
By linearity, we may assume that $Z$ is irreducible.
If $h$ vanishes identically on $Z$ and $\Re\lambda$ is large, then $v^\lambda\wedge [Z]=0$, and 
thus it trivially extends to $\lambda\in\C$. Assume that $h$ does not vanish identically on $Z$.  
Let $i\colon Z\hookrightarrow X$ be an embedding and let $\pi\colon \widetilde Z\to Z$ be a  smooth 
modification  of $Z$  
such that $\pi^* i^*h$ is locally a monomial; such a modification exists due to Hironaka's theorem 
on resolution of singularities.
After a partition of unity we are back to the case above. 
It follows that $\pi^*i^*v^\lambda$ has an analytic continuation to $\Re\lambda>-\epsilon$ for some $\epsilon>0$ and thus
$v^\lambda\w[Z]=i_* \pi_*( \pi^*i^*v^\lambda)$ has the desired analytic continuation. 
The value at $\lambda=0$ is equal to 
$$
i_*\pi_*(dd^c\log| \pi^*i^*h|^2)
$$
which proves  the second statement, since $(\log|h|^2)[Z]=i_*\pi_*(\log|\pi^*i^*h|^2)$. 
\end{proof}

Let $H$ denote the Cartier divisor defined by $h$. We define
$[H]\wedge [Z]$ as the value of \eqref{billy} at $\lambda=0$.
According to the lemma it does not depend on the particular choice of $h$  defining $H$.
It follows from the definition that 
\begin{equation}\label{tundra}
[H]\w ([Z_1]+[Z_2])=[H]\w[Z_1]+[H]\w[Z_2]
\end{equation}
and thus $[Z]\mapsto [H]\w [Z]$ is a linear operator
on Lelong currents, cf., \eqref{prod1}. 
However, in general it is {\it not} true that
$([H_1]+[H_2])\wedge [Z]= [H_1]\w [Z] +[H_2]\w [Z]$ or
$[H_1]\wedge[H_2] = [H_2]\wedge [H_1]$.

Since \eqref{billy} is analytic by Lemma~\ref{snittlemma} it follows that
$[H]\wedge {\bf 1}_H[Z]=0$, and so, using \eqref{tundra}, we get
\begin{equation*}
[H]\wedge[Z]=[H]\wedge \big({\bf 1}_H [Z] + {\bf 1}_{X\setminus H}[Z]\big) = 
[H]\wedge {\bf 1}_{X\setminus H}[Z].
\end{equation*}
Now, ${\bf 1}_{X\setminus H}[Z]$ is the current of integration over $Z^{X\setminus H}$, i.e.,
the components of $Z$ that are not contained in $|H|$. Since $H$ and $Z^{X\setminus H}$
intersect properly it follows that $[H]\wedge {\bf 1}_{X\setminus H}[Z]=[H\cdot Z^{X\setminus H}]$, 
cf.\ Section~\ref{propinter}. Summarizing, we get the computation rules
\begin{equation}\label{rakneregel} 
[H]\w [Z]=[H]\w \1_{X\setminus H}[Z]= [H\cdot Z^{X\setminus H}].
\end{equation}

\begin{remark} It is important to emphasize  that $[H]\w [Z]$ is {\it not} the same as
(the Lelong current associated with)  
the intersection  $H\cdot Z$   in \cite{fult}. In fact, if $Z$ is irreducible and
contained in $H$, then $[H]\w [Z]=0$, whereas in \cite{fult} the product
is a cycle in $Z$ of codimension $1$ that is well-defined up to rational equivalence.
\end{remark}

\begin{ex} \label{prodex}
Let $H_1$ and $H_2$ be Cartier divisors and let $H=H_1+H_2$. Then
$[H_1]\w [H]=[H_1]\w [H_2]$ but $[H]\w [H_1]= 0$. 
Moreover $[H_1]\wedge  {\bf 1}_{H_1} [H]=[H_1]\w [H_1]=0$ but ${\bf 1}_{H_1} [H_1]\w [H]=
{\bf 1}_{H_1} [H_1]\w[H_2]=[H_1]\w[H_2]$.
\end{ex}

We can construct Vogel cycles, cf., Section~\ref{intro},  by inductively applying operators $\1_Z$ and $[H]\w$. 
\begin{prop}\label{produktprop}
Let $X$ be an analytic space of dimension $n$ and let $h=(h_1,\ldots, h_n)$ be a Vogel sequence 
of an ideal $\J$ with variety $Z$ at $x\in X$, with corresponding divisors $H_1,\ldots, H_n$. 
Then on $X$,
\begin{equation}\label{pall}
[X_0]=1,\quad  
[X_{\ell}]=[H_\ell]\w\cdots\w [H_1]\,,\ \ell=1,\ldots, n 
\end{equation}
and  
\begin{equation}\label{pall2}
[X_0^Z]={\bf 1}_Z,   \quad [X^Z_{\ell}]={\bf 1}_Z  [H_\ell]\w\cdots\w [H_1],  \ \ell=1,\ldots, n.
\end{equation}
In particular, 
\begin{equation}\label{leongvogel}
[V^h]={\bf 1}_Z + {\bf 1}_Z[H_1]+ {\bf 1}_Z[H_2]\w[H_1] + \cdots +
{\bf 1}_Z[H_n]\w\cdots\w[H_1]. 
\end{equation}
\end{prop}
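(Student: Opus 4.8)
The plan is to prove \eqref{pall} by induction on $\ell$, and then deduce \eqref{pall2} by applying the operator ${\bf 1}_Z$, and finally assemble \eqref{leongvogel} from the definition of $V^h$. The base case $\ell=0$ of \eqref{pall} is the trivial identity $[X_0]=[X]=1$ (the current of the function $1$ on $X$, identified with $[X]$ as in Section~\ref{curranal}). Similarly $[X_0^Z]={\bf 1}_Z[X_0]={\bf 1}_Z 1={\bf 1}_Z$ by \eqref{prod1} and the observation following it that ${\bf 1}_Z 1$ is the Lelong current of $X^Z$.

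For the inductive step, suppose $[X_\ell]=[H_\ell]\w\cdots\w[H_1]$. By the definition of the Vogel construction, $X_{\ell}=X_\ell^Z+X_\ell^{X\setminus Z}$ is the splitting of $X_\ell$ into components contained in $Z$ and the rest, and $X_{\ell+1}=H_{\ell+1}\cdot X_\ell^{X\setminus Z}$. First I would note that, on the level of Lelong currents, $[X_\ell^{X\setminus Z}]={\bf 1}_{X\setminus Z}[X_\ell]$ — but more to the point, for the application of $[H_{\ell+1}]\w$ what matters is the splitting relative to the divisor $H_{\ell+1}$, not relative to $Z$. Here is where the Vogel condition \eqref{vogelcondition} enters: it guarantees that every irreducible component $W$ of $X_\ell^{X\setminus Z}$ meets $|H_{\ell+1}|$ properly (has codimension one more, or the intersection is empty), so $H_{\ell+1}$ intersects $X_\ell^{X\setminus Z}$ properly and the proper intersection cycle $H_{\ell+1}\cdot X_\ell^{X\setminus Z}$ is defined. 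Now I invoke the computational rule \eqref{rakneregel}: for any cycle $Z'$,
$$
[H_{\ell+1}]\w[Z']=[H_{\ell+1}]\w{\bf 1}_{X\setminus H_{\ell+1}}[Z']=[H_{\ell+1}\cdot (Z')^{X\setminus H_{\ell+1}}].
$$
Apply this with $Z'=X_\ell$. A component of $X_\ell$ survives ${\bf 1}_{X\setminus H_{\ell+1}}$ precisely when it is not contained in $|H_{\ell+1}|$; by the Vogel condition a component of $X_\ell$ contained in $|H_{\ell+1}|$ must already be contained in $Z$ (otherwise it would violate the codimension count at step $\ell+1$), hence $(X_\ell)^{X\setminus H_{\ell+1}}=X_\ell^{X\setminus Z}$ possibly plus some components of $X_\ell^Z$ not inside $H_{\ell+1}$ — and the latter, being inside $Z$ of codimension $\ell$, when hit properly by $H_{\ell+1}$ produce components of codimension $\ell+1$ inside $Z$; I need to check these are exactly absorbed correctly, but in any case $[H_{\ell+1}]\w[X_\ell]=[H_{\ell+1}\cdot X_\ell^{X\setminus H_{\ell+1}}]$ and by the definition $X_{\ell+1}=H_{\ell+1}\cdot X_\ell^{X\setminus Z}$, so I must verify the two agree. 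This is the main obstacle: reconciling the Vogel algorithm, which splits off $X_\ell^Z$ at \emph{each} stage, with the current product, which only splits off what lies in the \emph{current} divisor $H_{\ell+1}$. The resolution is that $X_\ell^Z$ has codimension $\ell$ and is supported in $Z$; when $H_{\ell+1}$ meets it properly the result has codimension $\ell+1$ and remains in $Z$, so it would in any case be stripped off into $X_{\ell+1}^Z$ at the next stage. Thus $[X_{\ell+1}]=[H_{\ell+1}]\w\cdots\w[H_1]$ follows, completing the induction for \eqref{pall}.

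For \eqref{pall2}, apply ${\bf 1}_Z$ to \eqref{pall}: by \eqref{prod1}, ${\bf 1}_Z[X_\ell]=[X_\ell^Z]$, which is exactly $[X_\ell^Z]={\bf 1}_Z[H_\ell]\w\cdots\w[H_1]$. Finally, \eqref{leongvogel} is immediate: by definition $V^h=X_0^Z+X_1^Z+\cdots+X_n^Z$, so $[V^h]=\sum_{\ell=0}^n[X_\ell^Z]$, and substituting \eqref{pall2} (with the $\ell=0$ term equal to ${\bf 1}_Z$) gives the stated formula. I would remark that the linearity identities \eqref{tundra} and \eqref{prod1} are what make the bookkeeping in the inductive step clean, and that the non-commutativity warnings (Example~\ref{prodex}) are precisely why the powers $[H_\ell]\w\cdots\w[H_1]$ must be read in the stated order.
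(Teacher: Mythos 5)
Your overall strategy (induction on $\ell$ via \eqref{rakneregel} and \eqref{tundra}, then applying ${\bf 1}_Z$ to get \eqref{pall2}, then summing for \eqref{leongvogel}) is the same as the paper's, but there is a genuine gap exactly at the point you flag yourself: the claim that $(X_\ell)^{X\setminus H_{\ell+1}}$ might contain ``some components of $X_\ell^Z$ not inside $H_{\ell+1}$,'' followed by the assertion that such components would be harmlessly ``stripped off into $X_{\ell+1}^Z$ at the next stage.'' That resolution is not correct. If such a component $W$ existed, then $[H_{\ell+1}]\w[X_\ell]$ would contain the extra term $[H_{\ell+1}\cdot W]$, whereas $X_{\ell+1}$ is by definition $H_{\ell+1}\cdot X_\ell^{X\setminus Z}$ and contains no such term; the identity \eqref{pall} would simply fail, and the extra codimension-$(\ell+1)$ piece would corrupt \eqref{pall2} and \eqref{leongvogel} as well (the Vogel cycle records $X_\ell^Z$ in codimension $\ell$, not $H_{\ell+1}\cdot X_\ell^Z$ in codimension $\ell+1$). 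Nothing ``absorbs'' it later.

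The missing observation, which is the one the paper uses, is that each $h_j$ lies in $\J_x$ and hence vanishes identically on $Z$; since every component of $X_\ell^Z$ is contained in $Z$, it is contained in $|H_{\ell+1}|$, so the set of problematic components is empty. Equivalently, $[H_{\ell+1}]\w[X_\ell^Z]=0$ by the very definition of the product (Lemma~\ref{snittlemma} gives $0$ when the function vanishes identically on the cycle), and therefore by \eqref{tundra}
$$
[X_{\ell+1}]=[H_{\ell+1}]\w[X_\ell^{X\setminus Z}]=[H_{\ell+1}]\w\big([X_\ell]-[X_\ell^Z]\big)=[H_{\ell+1}]\w[X_\ell],
$$
which closes the induction. (The Vogel condition is only needed for the complementary fact that no component of $X_\ell^{X\setminus Z}$ is contained in $|H_{\ell+1}|$, which you state correctly.) With this one-line fix the rest of your argument — the base case, the passage to \eqref{pall2} via \eqref{prod1}, and the summation giving \eqref{leongvogel} — is fine and matches the paper.
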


If we consider $X$ as embedded in some larger analytic space $Y$, then we have instead
$$
[X_0]= [X],\quad [X_{\ell}]=[H_\ell]\w\cdots\w [H_1]\w [X], \ \ell=1,\ldots, n
$$
and
$$
[X^Z_0]= {\bf 1}_Z [X],\quad [X^Z_{\ell}]={\bf 1}_Z  [H_\ell]\w\cdots\w [H_1]\w [X],\  \ell=1,\ldots, n
$$

\begin{proof}
In view of \eqref{prod1}, \eqref{pall2} follows from \eqref{pall}. 
Using \eqref{tundra}, we have, in view of \eqref{strut}, that
$$
[X_{1}]=[H_1]\w [X^{X\setminus Z}_0]=[H_1]\w([X_0]-[X^Z_{0}])=[H_1]
$$
since $[H_1]\w[X^Z_{0}]=[H_1]{\bf 1}_Z=0$. One obtains \eqref{pall} by induction. 
\end{proof}

\section{Bochner-Martinelli currents}\label{bmcurrents} 
Let $f=(f_0,\ldots, f_m)$ be a tuple of holomorphic functions on $X$ that generates $\J$
and let $Z$ be the zero set of $\J$. 
For $\Re \lambda \gg 0$, let 
\begin{eqnarray*}
&&M^{f,\lambda}_0:=1-|f|^{2\lambda}\\
&&M^{f,\lambda}_k:=\dbar|f|^{2\lambda}\w\frac{\partial\log|f|^2}{2\pi i}\w
(dd^c\log|f|^2)^{k-1} \text{ if } k\ge 1,
\end{eqnarray*} 
and
\begin{equation}\label{summan}
M^{f,\lambda}:=\sum_{k=0}^\infty M^{f,\lambda}_k,
\end{equation}
where $|f|^2=\sum_{j=0}^m|f_j|^2$. 
The sum in \eqref{summan} is finite for degree reasons,
and when   $\Re\lambda\gg0$, $M^{f,\lambda}$ is locally integrable. 
We will show that $\lambda\mapsto M^{f,\lambda}_k$ has a current-valued analytic continuation  to $\Re\lambda>-\epsilon$, 
for some $\epsilon >0$.  
We denote the value of $M^{f,\lambda}_k$ at $\lambda=0$ by $M_k^f$ and we write 
$
M^{f}:=\sum_{k}M_k^{f}.
$
The current $M^f$ and its components $M^f_k$ will be referred to as {\it Bochner-Martinelli currents}, 
cf. Remark \ref{oriental} below. 

A computation yields that 
\begin{equation*}
M^{f,\lambda}_k=
\lambda\frac{i}{2\pi}\frac{\partial |f|^2\w \dbar |f|^2}{|f|^{4-2\lambda}}\w
(dd^c\log |f|^2)^{k-1}
\end{equation*}
which is positive when $\lambda>0$, and thus $M^f_k$ is a positive current. 
Note that $M_0^f$ is the current of integration over the components of $X$, on which $f\equiv 0$. 
In particular, if $f$ does not vanish identically on any component of $X$, then $M_0^f=0$.  

Let  $\pi\colon \widetilde X\to X$ be a normal modification  
such that the pull-back ideal 
sheaf $\J\cdot \mathcal O_{\widetilde X}$ 
is principal; for instance one can take the normalization of the blow-up of
$X$ along $\J$.  
Then  $\pi^*f=f^0 f'$ where $f^0$ is a section of the holomorphic 
line bundle $L\to\widetilde X$ corresponding to the exceptional divisor $D_f$
of $\pi\colon \widetilde X\to X$, i.e., the divisor defined by $\mathcal J\cdot \mathcal O_{\widetilde X}$, and $f'$ 
is a nonvanishing tuple of sections
of $L^{-1}$.  Let $L$ be equipped with the metric defined by 
$|f^0|_{L}=|\pi^* f|=|f^0f'|$, and let 
\begin{equation}\label{omegadef}
\omega_f:=dd^c\log|f'|^2;
\end{equation}
here the right hand side is computed locally for any local trivialization of $L^{-1}$. Then $-\omega_f$ is the first 
Chern form of $(L,|\cdot|_L)$, and clearly
$\omega_f\ge 0$. 

Since $\log |\pi^* f|^2=\log |f^0|^2+\log |f'|^2$ it follows from the Poincare-Lelong formula that 
\begin{equation}\label{potta}
dd^c\log|\pi^* f|^2=[D_f]+\omega_f.
\end{equation}
In particular,
$
\pi^* (dd^c\log|f|^2)=\omega_f
$
outside $\pi^{-1}\{f=0\}$.  
Therefore, for $\Re \lambda \gg0$,
\begin{eqnarray}\label{billdalar}
&&\pi^* M^{f,\lambda}_0=1-|f^0 f'|^{2\lambda}\\\label{billdal}
&&\pi^* M^{f,\lambda}_k=(2\pi i)^{-1}\dbar|f^0f'|^{2\lambda}\w \partial\log|f^0f'|^2\w
\omega_f^{k-1}, ~~~ k\geq 1.
\end{eqnarray}
Now Lemma~\ref{snittlemma} asserts that 
$\lambda\mapsto \pi^* M^{f,\lambda}_k$  has an analytic continuation to $\Re\lambda >-\epsilon$ and 
since $M^{f,\lambda}_k=\pi_*\pi^* M^{f,\lambda}_k$ for $\Re \lambda \gg 0$, it follows that $\lambda\mapsto M^{f,\lambda}_k$ 
has the desired analytic continuation. Moreover 
\begin{eqnarray}
\label{MAlikhet0}
&&M_0^f = M_0^{f,\lambda}|_{\lambda=0}= 
\pi_*(\pi^* M_0^{f,\lambda}|_{\lambda=0}) = \pi_*({\bf 1}_{D_f})={\bf 1}_{\{f=0\}}\,. 
\\ \label{data2}
&&M^f_k =M^{f,\lambda}_k|_{\lambda=0} = \pi_*(\pi^*M^{f,\lambda}_k|_{\lambda=0})=
\pi_*([D_f]\w\omega_f^{k-1}), ~~~k\geq 1.
\end{eqnarray}

Following for example \cite{A2} one can check that for $k\geq 1$,
\begin{equation}\label{MAlikhet}
M_k^f={\bf 1}_Z (dd^c\log|f|^2)^k 
\end{equation}
and
$$
{\bf 1}_{X\setminus Z} (dd^c\log|f|^2)^k=\pi_*(\omega_f^k).
$$

It is not hard to see that  $M^{f,\lambda}_k$ is locally integrable for $\Re\lambda >0$ and that
$M^{f,\lambda}_k\to M^f_k$ as measures when $\lambda\to 0^+$.

\begin{remark}\label{gammaremark}
For future reference, let $g$ be a tuple of holomorphic functions such that $|g|\sim |f|$, i.e., 
there exists $C\in\R$ such 
that $|f|/C\leq |g|\leq C|f|$, 
and let $\pi\colon \widetilde X \to X$ be a normal modification such that both $\J(f)\cdot \O_{\widetilde X}$ 
and $\J(g)\cdot\O_{\widetilde X}$ are principal. Then $|f^0f'|\sim |g^0g'|$ and since $f'$ and $g'$ are 
non-vanishing 
it follows that $f^0$ and $g^0$ define the same divisor on $\widetilde X$. Therefore the corresponding
negative Chern forms $\omega_f$ and $\omega_g$ are $dd^c$-cohomologous, i.e.,  
there is a global smooth function $\gamma$ such that $dd^c\gamma=\omega_f-\omega_g$.  
\end{remark}

By combining \cite[Proposition~3.2]{A2} and \cite[Corollary~4]{hasamJFA} it follows that
\begin{equation}\label{mmm}
M^f_k=\lim_{\epsilon\to 0} \,  \frac{\epsilon(dd^c|f|^2)^k}{(|f|^2+\epsilon)^{k+1}}.
\end{equation}

\begin{remark}\label{oriental} Given a tuple $f$ as above, associated residue currents of Bochner-Martinelli type 
were introduced in \cite{PTY}. Let $E$ be a trivial vector bundle with basis elements $e_0,\ldots, e_m$
and consider $f=f_0e_0+\cdots +f_me_m$ as a section of the dual bundle $E^*$ with basis elements $e_j^*$.
Following \cite{A1} one can define a residue current $R^f=R^f_0+\cdots + R^f_n$, where $R^f_k$
is a current of bidegree $(0,k)$ with values in  the exterior product $\Lambda^k E$, such that the coefficients
in $R^f$ are precisely the currents in \cite{PTY}.  It is proved in \cite{A2} that
$$
M^f_k=R^f\cdot (df)^k/(2\pi i)^kk!,
$$
where $\cdot$ denotes the natural contraction.  For more details, see, e.g., \cite{A2}.
\end{remark}


\section{Products of Bochner-Martinelli currents}\label{products} 
Given tuples $f_1,\ldots, f_r$ of holomorphic 
functions in $X$, we will give meaning to the product 
\begin{equation}\label{klamma}
M^{f_r}\w\cdots\w M^{f_1} 
\end{equation} 
of Bochner-Martinelli currents. The construction is recursive. 
Assume that $M^{f_\ell}\w\cdots\w M^{f_1}$ is defined;
it follows from the proof of Proposition \ref{prop3-1} that
\begin{equation}\label{forts}
\lambda\mapsto 
M^{f_{\ell+1},\lambda}\w M^{f_\ell}\w\cdots \w M^{f_1}
\end{equation}
is holomorphic for $\Re\lambda>-\epsilon$, where $\epsilon >0$. Set   
\begin{equation}\label{defen}
M^{f_{\ell+1}}\w M^{f_\ell}\ldots\w M^{f_1}:=
M^{f_{\ell+1},\lambda}\w M^{f_\ell}\w\ldots \w M^{f_1}\big|_{\lambda=0}.
\end{equation}
We define the products $M^{f_r}_{k_r}\w\cdots\w M^{f_1}_{k_1}$ in the analogous way so that 
\begin{equation}\label{devproduct}
M^{f_r}\w\cdots\w M^{f_1}=
\sum_{k_r,\ldots,k_1\ge 0}
M^{f_r}_{k_r}\w\cdots\w M^{f_1}_{k_1}. 
\end{equation}

Notice that if the $f_j$ are single functions, then $M^{f_j}={\bf 1}_{f_j} + [f_j]$ and 
\begin{equation}\label{krabba}
M^{f_r}\wedge \cdots\wedge M^{f_1}= ({\bf 1}_{f_r} + [f_r])\wedge\cdots\wedge ({\bf 1}_{f_1} + [f_1]); 
\end{equation} 
cf.\ Section~\ref{lelongcartier}.

\begin{prop}\label{vogelex}
If $h=(h_1,\ldots,h_n)$ is a Vogel sequence of some ideal with zero set $Z$ at $x$, then
\begin{equation*}
M^{h_n}\wedge \cdots \wedge M^{h_1} = [V^h].
\end{equation*}
\end{prop}

\begin{proof}
In light of Lemma ~\ref{polkagris}, 
\begin{equation*}
\1_{h_\ell}\cdots \1_{h_{k+1}}[h_k]\w \cdots\w [h_1]=\1_Z [h_k]\w\cdots\w [h_1].
\end{equation*} 
Thus, by \eqref{rakneregel}, 
$[h_{\ell+1}] \w \1_{h_\ell}\cdots \1_{h_{k+1}}[h_k]\w\cdots\w [h_1]=0$. Hence, in view of \eqref{krabba},
\begin{equation}\label{italien}
M^{h_n}\w\cdots\w M^{h_1}= \sum_{k=0}^n {\bf 1}_{h_n}\cdots {\bf 1}_{h_{k+1}} [h_k]\w\cdots \w [h_1] = 
\sum_{k=0}^n {\bf 1}_{Z} [h_k]\w\cdots \w [h_1];
\end{equation}
here we have used that $[h_n]\w\cdots\w [h_1]$ has support on $Z$. 
Now, Proposition~\ref{produktprop} asserts that the right hand side of \eqref{italien} is equal to $[V^h]$.
\end{proof}

In contrast to the definition of products of residue currents of Bochner-Martinelli
type introduced in \cite{Wulcan}, the recursively defined products \eqref{klamma} are \emph{not}
commutative in general, not even if the tuples just consist of one single function. For instance, in $\C^2_{x,y}$
we have that $M^{xy}\w M^{y}=0$, whereas $M^{y}\w M^{xy}=[0]$, cf., Example ~\ref{prodex}.
Various approaches to recursively defined products of residue currents are investigated in \cite{LS}.

\begin{prop}\label{prop3-1} 
Let $f_1,\ldots, f_r$ be tuples of holomorphic functions in $X$, with common zero set $Z=\{f_1=\ldots =f_r=0\}$. 
Then the current $M^{f_r}\w\cdots\w M^{f_1}$, defined by \eqref{defen}, is positive and  has support on $Z$.

Let $\pi \colon \widetilde X\to X$ be a normal modification such that the sheaves $\J(f_\ell)\cdot\O_{\widetilde X}$ 
are principal for $\ell=1,\ldots r$. As in Section \ref{bmcurrents},  let  $D_{f_\ell}$ 
and $\omega_{f_\ell}$ be  the corresponding divisors and negative Chern forms, respectively. Then 
\begin{equation}\label{uppeform}
M^{f_r}_{k_r}\w\ldots\w M^{f_1}_{k_1}
= \pi_*\big([D_{f_r}]\w\cdots\w[D_{f_1}]\w
\omega_{f_r}^{k_r-1}\w\cdots\w\omega_{f_1}^{k_1-1}\big),
\end{equation}
where, if $k_\ell=0$, the factor $[D_{f_\ell}]$ shall be replaced by ${\bf 1}_{D_{f_j}}$ and the factor $\omega_{f_\ell}^{k_\ell-1}$ 
shall be removed.

Assume that $g_1,\ldots g_r$ are tuples of holomorphic functions in $X$ such that $|g_\ell|\sim|f_\ell|$ for $\ell=1,\ldots,r$. 
Then there is a normal current $T$ with
support on $Z$ such that
\begin{equation}\label{prop3.1-1graderad}
dd^c T= M^{f_r}_{k_r}\w\cdots\w M^{f_1}_{k_1}-M^{g_r}_{k_r}\w\cdots\w M^{g_1}_{k_1}\,.  
\end{equation}
\end{prop}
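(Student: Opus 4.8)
The plan is to pass to a common normal modification and reduce everything to comparing products of currents upstairs on $\widetilde X$, where the exceptional divisors of $f_\ell$ and $g_\ell$ agree by Remark~\ref{gammaremark}. First I would choose a single normal modification $\pi\colon\widetilde X\to X$ such that all the sheaves $\J(f_\ell)\cdot\O_{\widetilde X}$ and $\J(g_\ell)\cdot\O_{\widetilde X}$ are principal simultaneously; this exists by taking a normalization of a suitable blow-up. By Remark~\ref{gammaremark}, since $|f_\ell|\sim|g_\ell|$ the sections $f_\ell^0$ and $g_\ell^0$ define the same divisor $D_\ell:=D_{f_\ell}=D_{g_\ell}$ on $\widetilde X$, and there are global smooth functions $\gamma_\ell$ with $dd^c\gamma_\ell=\omega_{f_\ell}-\omega_{g_\ell}$. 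Using the product formula \eqref{uppeform}, both sides of \eqref{prop3.1-1graderad} are then push-forwards under $\pi$ of products of the fixed currents $[D_\ell]$ (or ${\bf 1}_{D_\ell}$) against wedge powers of $\omega_{f_\ell}$ resp.\ $\omega_{g_\ell}$.

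Next I would handle the difference of the two wedge products of Chern forms by a telescoping argument. Writing $\omega_{f_\ell}=\omega_{g_\ell}+dd^c\gamma_\ell$ and expanding $\omega_{f_r}^{k_r-1}\w\cdots\w\omega_{f_1}^{k_1-1}-\omega_{g_r}^{k_r-1}\w\cdots\w\omega_{g_1}^{k_1-1}$ as a telescoping sum, each term is $dd^c$ of a smooth form $\gamma_\ell$ wedged with mixed powers of the various $\omega_{f_i}$ and $\omega_{g_i}$; the $dd^c$ can be pulled out because $[D_r]\w\cdots\w[D_1]$ is closed (it is a product of Lelong currents of divisors). Hence the difference on $\widetilde X$ equals $dd^c\widetilde T$ for an explicit current $\widetilde T$ that is a sum of terms $[D_r]\w\cdots\w[D_1]\w\gamma_\ell\,\w(\text{smooth positive closed forms})$; each such term has order zero, and so does its $dd^c$ (it is a difference of two positive currents), so $\widetilde T$ is normal. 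Setting $T:=\pi_*\widetilde T$ and using that $\pi_*$ commutes with $dd^c$ gives \eqref{prop3.1-1graderad}. The support statement follows since $[D_r]\w\cdots\w[D_1]$ is supported on $\pi^{-1}Z$ (being supported on each $D_\ell$, which lies over $Z$), hence $\widetilde T$ is too, so $T$ is supported on $Z$; and normality is preserved under proper push-forward.

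The case where some $k_\ell=0$ requires only notational care: there the factor $[D_\ell]$ is replaced by ${\bf 1}_{D_\ell}$ and there is no $\omega_{f_\ell}$ power to expand, so that index simply does not contribute a $\gamma_\ell$ term to the telescoping sum; one should check that ${\bf 1}_{D_\ell}=\pi^*{\bf 1}_Z$ is the same closed positive current for $f_\ell$ and $g_\ell$, which is immediate since $D_{f_\ell}=D_{g_\ell}$. The main obstacle I anticipate is making the telescoping/expansion rigorous at the level of currents rather than forms: one must ensure that at each stage the partial products $[D_r]\w\cdots\w[D_1]\w(\text{smooth forms})$ make sense and that $dd^c$ genuinely commutes past the closed current $[D_r]\w\cdots\w[D_1]$ and past the smooth closed factors — this is where one invokes that wedging a closed positive current with a smooth closed positive form is again closed positive, and that $dd^c(\gamma\,\w\,\text{closed smooth}\,\w\,T)=dd^c\gamma\w(\cdots)\w T$ when $T$ is closed. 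Once that bookkeeping is in place, normality of $T$ and the support property are essentially automatic.
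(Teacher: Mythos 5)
Your treatment of the comparison statement \eqref{prop3.1-1graderad} is essentially the paper's argument: pass to a common modification principalizing all the sheaves, use Remark~\ref{gammaremark} to get $D_{f_\ell}=D_{g_\ell}$ and $dd^c\gamma_\ell=\omega_{f_\ell}-\omega_{g_\ell}$, telescope the difference of the $\omega$-powers into $dd^c$ of explicit smooth forms, pull $dd^c$ past the closed factors, and push forward. The paper merely organizes the telescoping as ``change one $f_\ell$ at a time'', producing for each $\ell$ a smooth global form $w$ with $dd^c w=\omega_{f_\ell}^{k_\ell-1}-\omega_{g_\ell}^{k_\ell-1}$ and inserting it in place of that power; the $k_\ell=0$ case and the support/normality bookkeeping are the same in both versions.

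The gap is that you never prove \eqref{uppeform}: you invoke it (``Using the product formula \eqref{uppeform}, both sides \dots are push-forwards \dots''), but \eqref{uppeform} is one of the assertions of the proposition, and so is the positivity of the product, which you do not address. Both of these, together with the existence of the analytic continuations in the recursive definition \eqref{defen} (without which the currents on the left-hand side are not even defined), are obtained in the paper by iterating the one-factor computation of Section~\ref{bmcurrents}: for $\Re\lambda$ large, $\pi^*M^{f_{\ell+1},\lambda}_{k_{\ell+1}}=(2\pi i)^{-1}\dbar|f^0_{\ell+1}f'_{\ell+1}|^{2\lambda}\w\partial\log|f^0_{\ell+1}f'_{\ell+1}|^2\w\omega_{f_{\ell+1}}^{k_{\ell+1}-1}$, and Lemma~\ref{snittlemma}, applied with $[Z]$ replaced by the already constructed closed order-zero partial product, yields both the continuation to $\Re\lambda>-\epsilon$ and the value $[D_{f_{\ell+1}}]\w\omega_{f_{\ell+1}}^{k_{\ell+1}-1}\w(\cdots)$ at $\lambda=0$; positivity then follows because the right-hand side of \eqref{uppeform} is the push-forward of a product of positive closed $(1,1)$-currents and positive forms. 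You should supply this step, since otherwise your proof of \eqref{prop3.1-1graderad} rests on an unproved identity. Two small slips that do not affect the outcome: ${\bf 1}_{D_{f_\ell}}$ is $\pi^*{\bf 1}_{\{f_\ell=0\}}$, not $\pi^*{\bf 1}_Z$; and normality of your current $\widetilde T$ should be checked via $d\widetilde T$ rather than $dd^c\widetilde T$ --- it holds because each term of $\widetilde T$ is a closed order-zero current wedged with a smooth form.
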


\begin{proof} 
Iteratively using Lemma \ref{snittlemma}, the computation rules \eqref{rakneregel}, 
and \eqref{billdalar}--\eqref{data2} we see that the desired 
analytic continuation of \eqref{forts} exists and that \eqref{uppeform} holds. 
It follows that $M^{f_r}_{k_r}\w\ldots\w M^{f_1}_{k_1}$ has its support 
contained in  $\pi(|D_{f_r}|\cap\cdots\cap |D_{f_1}|)=Z$. 
Moreover $M^{f_r}_{k_r}\w\ldots\w M^{f_1}_{k_1}$ is the push-forward of a product of positive $(1,1)$-currents and 
positive forms, and hence it is positive.

To prove the last  part, it suffices to change
one of the $f_\ell$ to $g_\ell$ with $|g_\ell|\sim|f_\ell|$. First notice that 
then $M^{f_\ell}_0={\bf 1}_{f_\ell}={\bf 1}_{g_\ell}=M^{g_\ell}_0$.
Let us then  assume that  $k_\ell\geq 1$, and that 
the modification $\pi$ is chosen so that also   $\J(g_\ell)\cdot \O_{\widetilde X}$ is principal. 
By Remark~\ref{gammaremark}, there is a smooth global function
$\gamma$ on $\widetilde X$ such that $\omega_{f_\ell}-\omega_{g_\ell}=dd^c\gamma$ and thus we can find a smooth 
global form $w$ such that
$
dd^c w=\omega_{f_\ell}^{k_\ell-1}-\omega_{g_\ell}^{k_\ell-1}. 
$
Let 
$$
T: = \pi_*\big(\tau_r\w\cdots\w \tau_{\ell+1}\w [D_{f_\ell}]\w w \w \tau_{\ell-1}\w\cdots\w \tau_1),
$$
where $\tau_j= {\bf 1}_{D_{f_j}}$ if $k_j=0$ and $\tau_j=[D_{f_j}]\wedge \omega_{f_j}^{k_j-1}$ otherwise. 
Then $T$ satisfies \eqref{prop3.1-1graderad}. Note that 
$\tau_r\w\cdots\w \tau_{\ell+1}\w [D_{f_\ell}]\w w \w \tau_{\ell-1}\w\cdots\w \tau_1$ is normal, 
and since normality is preserved under push-forward, so is $T$. 
\end{proof}

We also define products of Bochner-Martinelli currents and Lelong currents. If $f_1,\ldots, f_r$ are 
tuples of holomorphic functions in $X$ and $Z$ is an analytic subset of $X$, we define recursively 
$M^{f_1}\w [Z]:=M^{f_1,\lambda }\w [Z]\big|_{\lambda=0}$,
and 
$$
M^{f_{k+1}}\w \cdots\w M^{f_1}\w [Z]:=
M^{f_{k+1},\lambda}\w M^{f_k}\w\cdots \w M^{f_1}\w [Z]\big|_{\lambda=0}.
$$
By arguments as in the proof of Proposition \ref{prop3-1} one can prove that the desired analytic continuations exist, 
and thus $M^{f_r}\w\cdots\w M^{f_1}\w [Z]$ is well-defined. 
It is  readily checked that if $i\colon Z\hookrightarrow X$, then, for any $k_1,...,k_r\in \N$, 
\begin{equation}\label{jobb}
M^{f_r}_{k_r}\w\cdots\w M^{f_1}_{k_1}\w [Z]=
i_* [M^{i^*f_r}_{k_r}\w\cdots\w M^{i^*f_1}_{k_r}].
\end{equation}
Moreover, if $Z=Z'+Z''$, $Z''\subset \{f_j=0\}$, and $k_j>0$, then one checks that
\begin{equation}\label{jobb2}
M^{f_r}_{k_r}\w\cdots\w M^{f_1}_{k_1}\w [Z]=
M^{f_r}_{k_r}\w\cdots\w M^{f_1}_{k_1}\w [Z'],
\end{equation}
cf.\ (the first equality of) \eqref{rakneregel}.

For future reference, note that if $f$ is a tuple of holomorphic functions on the analytic space $X$ then 
\begin{equation}\label{uppdelning}
M^f=M^f\1_X=\sum_jM^f\1_{X_j},
\end{equation}
where $X_j$ are the irreducible components of $X$.

\begin{prop}\label{heldag}
Let $f_1,\ldots, f_r$ be tuples of holomorphic functions in $X$ and let $\xi$ be a tuple of holomorphic functions 
such that $\{\xi=0\}=\{x\}$, where $x\in X$. 
Then 
\begin{equation}\label{snurragrad}
M^\xi\w M^{f_r}_{k_r}\w\cdots\w M^{f_1}_{k_1}=M^\xi_{n-k} \w M^{f_r}_{k_r}\w\cdots\w M^{f_1}_{k_1}=\alpha[x],
\end{equation}
where $k=k_1+\cdots + k_r$ and $\alpha$ is a non-negative integer. If $\xi$ generates the maximal ideal at $x\in X$, 
then $\alpha=\ell_x\big(M^{f_r}_{k_r}\w\cdots\w M^{f_1}_{k_1}\big)$. 
\end{prop}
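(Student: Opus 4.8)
The plan is to analyze $M^\xi\w M^{f_r}_{k_r}\w\cdots\w M^{f_1}_{k_1}$ by using the structural results already established. First I would note that $\mu:=M^{f_r}_{k_r}\w\cdots\w M^{f_1}_{k_1}$ is, by Proposition~\ref{prop3-1}, a positive current supported on $Z=\{f_1=\cdots=f_r=0\}$; moreover, tracking bidegrees through \eqref{uppeform}, it has bidegree $(k,k)$ with $k=k_1+\cdots+k_r$. It is also closed: indeed, from \eqref{data2} and \eqref{uppeform} it is a push-forward of $[D_{f_r}]\w\cdots\w[D_{f_1}]\w\omega_{f_r}^{k_r-1}\w\cdots\w\omega_{f_1}^{k_1-1}$ (with the obvious modifications when some $k_\ell=0$), and each $[D_{f_\ell}]$ is closed while each $\omega_{f_\ell}$ is a smooth closed form, so the form upstairs is $d$-closed and hence so is its push-forward. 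Thus $\mu$ is a positive closed $(k,k)$-current with support contained in $Z$.

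Next I would compute $M^\xi\w\mu$ using the defining recursion \eqref{defen}: it is the value at $\lambda=0$ of $M^{\xi,\lambda}\w\mu$. Since $\{\xi=0\}=\{x\}$, only the term of top complementary bidegree can survive: $M^\xi_j\w\mu$ has bidegree $(j+k,j+k)$, and for this to be supported on the point $\{x\}$ in an $n$-dimensional space we need $j+k=n$, i.e. $j=n-k$; all other components vanish for dimension reasons (a positive $(p,p)$-current with $p<n$ supported at a point is $0$ by Lemma~\ref{polkagris}, and likewise there are no currents of bidegree $>(n,n)$). This gives the first equality in \eqref{snurragrad}. For the second equality, $M^\xi_{n-k}\w\mu$ is a positive closed $(n,n)$-current supported at the single point $x$, hence by Lemma~\ref{polkagris} it equals $\alpha[x]$ for some constant $\alpha$, which is $\ge 0$ by positivity. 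That $\alpha$ is a non-negative \emph{integer} I would get from the explicit blow-up formula \eqref{uppeform}: choosing $\pi$ to simultaneously principalize $\J(\xi)$ and all the $\J(f_\ell)$, the current $\alpha[x]$ is the push-forward of $[D_\xi]\w[D_{f_r}]\w\cdots\w[D_{f_1}]\w\omega_\xi^{n-k-1}\w\omega_{f_r}^{k_r-1}\w\cdots\w\omega_{f_1}^{k_1-1}$, whose total mass is an intersection number of the divisor $[D_\xi]\w[D_{f_1}]\w\cdots\w[D_{f_r}]$ against Chern forms on the compact (over a neighborhood of $x$) exceptional fiber $\pi^{-1}(x)$, hence an integer; alternatively, and perhaps more cleanly, I would invoke the factorization of Remark~\ref{faktorbm} expressing $M$-currents through Bochner-Martinelli residue currents and conclude integrality from the known structure there, but the blow-up argument is self-contained given what is in the excerpt.

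Finally, for the identification $\alpha=\ell_x(\mu)$ when $\xi$ generates the maximal ideal $\mathfrak M_x$, I would use Lemma~\ref{lelle}. If $\xi$ generates $\mathfrak M_x$ then $|\xi|^2\sim|z-x|^2$ in suitable local coordinates on an embedding $X\hookrightarrow\C^N$ (here one passes to a minimal embedding so that $\mathfrak M_x$ corresponds to the coordinate functions). By Remark~\ref{gammaremark} and the cohomological invariance expressed in \eqref{prop3.1-1graderad}, replacing $\xi$ by the coordinate tuple $z-x$ changes $M^\xi\w\mu$ only by $dd^cT$ for a normal current $T$ supported on $\{x\}\cap Z$; since such a $T$ has bidegree $(n-1,n-1)$ and is supported at a point it vanishes by Lemma~\ref{polkagris}, so $\alpha[x]=M^{z-x}\w\mu=M^{z-x}_{n-k}\w\mu$, which I would then unwind: $M^{z-x}_{n-k}=\dbar|z-x|^{2\lambda}\w\partial\log|z-x|^2/(2\pi i)\w(dd^c\log|z-x|^2)^{n-k-1}\big|_{\lambda=0}$, so by Lemma~\ref{lelle} (applied with $\mu$ in the role of the closed positive current there, after pushing everything into $\C^N$) the mass of $M^{z-x}_{n-k}\w\mu$ at $x$ is exactly $\ell_x(\mu)$. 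Hence $\alpha=\ell_x(\mu)=\ell_x\big(M^{f_r}_{k_r}\w\cdots\w M^{f_1}_{k_1}\big)$.

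I expect the main obstacle to be the careful justification that one may replace $\xi$ by the linear coordinate tuple $z-x$ inside the product without changing the point mass — i.e. making rigorous the passage from $|\xi|\sim|z-x|$ to equality of the resulting point masses — together with the bookkeeping needed to apply Lemma~\ref{lelle} on the analytic space $X$ (via its embedding and the compatibility of Lelong numbers with embeddings established in Section~\ref{curranal}). The integrality of $\alpha$ and the dimension-counting vanishing are comparatively routine given \eqref{uppeform} and Lemma~\ref{polkagris}.
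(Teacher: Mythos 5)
Your proposal is correct and follows essentially the same route as the paper: positivity plus support at $\{x\}$ plus Lemma~\ref{polkagris} give $\alpha[x]$, integrality comes from reading off an intersection number in the blow-up formula \eqref{uppeform}, and the identification $\alpha=\ell_x(\mu)$ is obtained by using the comparison statement \eqref{prop3.1-1graderad} (killed by Lemma~\ref{polkagris} in bidegree $(n-1,n-1)$) to replace $\xi$ by the pullback of ambient coordinates and then applying Lemma~\ref{lelle} in $\C^N$ via \eqref{jobb}. The only cosmetic differences are that you spell out the degree count behind the first equality in \eqref{snurragrad} (which the paper leaves implicit) and detour through a minimal embedding, which is unnecessary since $|\xi|\sim|i^*z|$ for any local embedding once $\xi$ generates the maximal ideal.
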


\begin{proof}
By Proposition \ref{prop3-1}, $M^\xi_{n-k}\w M^{f_r}_{k_r}\w\cdots\w M^{f_1}_{k_1}$ is positive and has support at $x$, 
and thus by Lemma \ref{polkagris} it is of the form $\alpha[x]$ for some non-negative $\alpha$. 
Let $\pi:\widetilde X\to X$ be a normal modification such that $\J(f_\ell)\cdot\O_{\widetilde X}$ and 
$\J(\xi)\cdot\O_{\widetilde X}$ are principal. 
Let us use the notation from Section \ref{bmcurrents}. 
Then, from \eqref{uppeform}, we see that $\alpha$ is an intersection number and hence an integer.

Now assume that $\xi$ generates the maximal ideal at $x$ and that $i\colon X\hookrightarrow\C^N$ is a local
embedding such that $i(x)=0$, so that $i_*[x]=[0]$. By the second part of Proposition \ref{prop3-1} we may assume 
that $f_j=i^* F_j$ and $\xi=i^* z$ for some tuples $F_j$ and 
the standard  coordinate system $z=(z_1,\ldots,z_N)$ in $\C^N$. 
%
Then
\begin{equation}\label{uggl}
i_*( M^\xi_{n-k}\w M^{f_r}_{k_r}\w\cdots\w M^{f_1}_{k_1})=
M^z_{n-k}\w M^{F_r}_{k_r}\w\cdots\w M^{F_1}_{k_1}\w [X],
\end{equation}
cf. \eqref{jobb}. 
By Lemma \ref{lelle}, the right hand side of \eqref{uggl} is precisely the Lelong 
number of $M^{F_r}_{k_r}\w\cdots\w M^{F_1}_{k_1}\w [X]$ at $0$ in $\C^N$ times $[0]$. 
\end{proof}

\begin{prop}\label{remarkheldag} 
The  Lelong number at $x$ of $M^{f_r}_{k_r}\w\cdots\w M^{f_1}_{k_1}$
is unchanged if we replace $f_j$ by $g_j$ such that $|f_j|\sim |g_j|$.
\end{prop}

\begin{proof}
It follows from the second part of Proposition~\ref{prop3-1},  
applied to $f_1,\ldots, f_r,\xi$ since then 
$T$, which has  bidegree  $(n-1,n-1)$, must vanish by Lemma~\ref{polkagris}.
\end{proof}

One can replace all the evaluations in the definition of the product by one single
evaluation in the following way; for the proof see \cite{aswylambda}.

\begin{prop}\label{prop3-4} 
Assume that $\mu_j$ are strictly positive integers such that
$\mu_1>\mu_2>\ldots >\mu_r$. Then $\lambda\mapsto M^{f_r,\lambda^{\mu_r}}_{k_r}\w\cdots\w M^{f_1,\lambda^{\mu_1}}_{k_1}$
is holomorphic in a \nbh of the half-axis $[0,\infty)$ in $\C$ and 
\begin{equation}\label{product} 
M^{f_r}_{k_r}\w\cdots\w M^{f_1}_{k_1} = M^{f_r,\lambda^{\mu_r}}_{k_r}\w\cdots\w M^{f_1,\lambda^{\mu_1}}_{k_1}\big|_{\lambda=0}\,.
\end{equation}  
\end{prop}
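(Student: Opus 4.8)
The plan is to reduce the multi-variable analytic continuation statement to the single-variable lemmas already established, by pulling everything back to a common normal modification and working fiber by fiber. First I would fix a normal modification $\pi\colon\widetilde X\to X$ such that all the sheaves $\J(f_\ell)\cdot\O_{\widetilde X}$ are principal simultaneously, so that $\pi^*f_\ell=f_\ell^0 f_\ell'$ with $f_\ell^0$ cutting out the exceptional divisor $D_{f_\ell}$ and $f_\ell'$ nonvanishing. Using \eqref{billdalar}--\eqref{billdal}, each factor $M^{f_\ell,\lambda^{\mu_\ell}}_{k_\ell}$ pulls back (for $\Re\lambda\gg0$) to $(2\pi i)^{-1}\dbar|f_\ell^0 f_\ell'|^{2\lambda^{\mu_\ell}}\w\partial\log|f_\ell^0 f_\ell'|^2\w\omega_{f_\ell}^{k_\ell-1}$, which after a partition of unity and a further monomialization is a wedge of expressions of the form $\dbar|t^a u|^{2\lambda^{\mu_\ell}}\w\partial\log|t^a u|^2$ times smooth forms $\omega_{f_\ell}^{k_\ell-1}$. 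So the whole product becomes, locally on $\widetilde X$, a wedge of $r$ such one-variable factors, with mutually distinct exponents $\lambda^{\mu_1},\ldots,\lambda^{\mu_r}$.

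Next I would carry out the analytic continuation in these local coordinates. The key point is that each individual factor $\dbar|t^{a_\ell}|^{2\lambda^{\mu_\ell}}\w\partial\log|t^{a_\ell}|^2$ extends holomorphically past $\lambda=0$ by (the proof of) Lemma~\ref{snittlemma}, and its value at $\lambda=0$ is the Lelong current $\sum a_{\ell,j}[t_j]/(2\pi i)$; since these currents for different $\ell$ live with respect to different blocks of coordinates after suitable choices (one monomializes the product $f_1^0\cdots f_r^0$ so that the various exceptional components are simultaneously in normal crossings), the wedge is a well-defined product of Lelong currents wedged against smooth forms, and the joint continuation in the single variable $\lambda$ exists on a neighborhood of $[0,\infty)$. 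Because distinct powers $\lambda^{\mu_\ell}$ with $\mu_1>\cdots>\mu_r$ all tend to $0$ together as $\lambda\to0^+$ but decouple the ``speeds'' — the relevant point, as in Proposition~\ref{prop3-4}'s statement, is that the iterated-limit value \eqref{defen} is recovered because $\lambda^{\mu_r}\to0$ is reached ``last'' relative to $\lambda^{\mu_1}$ — the single evaluation at $\lambda=0$ reproduces the recursively defined product. Concretely, one shows by induction on $r$ that $M^{f_r,\lambda^{\mu_r}}_{k_r}\w\cdots\w M^{f_1,\lambda^{\mu_1}}_{k_1}$, viewed as a function of $\lambda$ with the inner $r-1$ factors already analytically continued and evaluated along $\lambda^{\mu_2},\ldots,\lambda^{\mu_r}$, agrees near $\lambda=0$ with $M^{f_r,\lambda^{\mu_r}}_{k_r}\w(M^{f_{r-1}}_{k_{r-1}}\w\cdots\w M^{f_1}_{k_1})$ up to a term holomorphic in $\lambda$ vanishing at $0$; pushing forward by $\pi_*$ and invoking \eqref{defen} closes the induction.

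The main obstacle I expect is the bookkeeping in the joint monomialization: one must choose $\widetilde X$ so that not only each $f_\ell^0$ but the product $f_1^0\cdots f_r^0$ has normal-crossings support, and then track that, at a point of $\widetilde X$ lying on several exceptional components, the factors $\dbar|f_\ell^0 f_\ell'|^{2\lambda^{\mu_\ell}}\w\partial\log|f_\ell^0 f_\ell'|^2$ for different $\ell$ can share coordinates $t_j$. In that situation the naive product of the $\lambda=0$ limits could a priori involve $[t_j]\w[t_j]$, which vanishes, so one must verify that the analytic continuation of the product genuinely lands on the same current as the iterated construction rather than producing extra mass — this is exactly where the distinctness $\mu_1>\cdots>\mu_r$ is used, since along $\lambda\to0^+$ the factor with the smallest exponent $\lambda^{\mu_r}$ behaves like the ``outermost'' $\dbar|\cdot|^{2\lambda}$ that still sees the regularization, matching the order in which \eqref{defen} applies the evaluations. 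Once this ordering argument is in place the rest is a routine induction using Lemma~\ref{snittlemma} and the push-forward formula \eqref{data2}, together with Lemma~\ref{polkagris} to control bidegrees.
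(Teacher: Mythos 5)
Your overall strategy (pull back to a common modification, monomialize via Hironaka, reduce to local expressions of the form $\dbar|t^{a}u|^{2\lambda^{\mu_\ell}}\w\partial\log|t^{a}u|^2$ wedged with smooth forms) is the same as the paper's, but the proposal stops exactly at the point where the actual mathematical content of the proposition lies. You correctly identify the obstacle — after monomialization the various $f_\ell^0$ generically \emph{share} coordinates $t_j$, so one cannot argue factor by factor — but you then write that ``this is exactly where the distinctness $\mu_1>\cdots>\mu_r$ is used'' and that ``once this ordering argument is in place the rest is a routine induction.'' That ordering argument \emph{is} the proposition; it is not supplied. In the paper it is the content of Lemma~\ref{pilligt}: one expands the wedge of the $\dbar|x^{\alpha_\ell}|^{2\lambda_\ell}$ factors into a sum over multi-indices $I$ with determinant coefficients $A_I$, and each nonvanishing term factors as a current $\widetilde\Gamma(\lambda)$ that genuinely is a tensor product of one-variable currents (hence has an honest joint continuation) times the scalar rational function
\[
\gamma(\lambda)=\frac{\lambda_1\cdots\lambda_p}{b_1(\lambda)\cdots b_p(\lambda)},\qquad b_k(\lambda)=\sum_\ell\lambda_\ell\,\alpha_{\ell,k},
\]
in which all the variables $\lambda_\ell$ are inextricably mixed precisely because the monomials share coordinates. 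The crux is then to show that this $0$-homogeneous function, restricted to the curve $\lambda_j=\kappa^{\mu_j}$, is holomorphic near $\kappa=0$ and that its value there agrees with the iterated limits $|_{\lambda_1=0}\cdots|_{\lambda_r=0}$; the paper does this with the substitution $\tau_j=\lambda_j/\lambda_{j+1}$ and a case analysis on the denominators. None of this appears in your proposal, and your heuristic about ``speeds'' of convergence does not substitute for it.

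A secondary problem is the induction you sketch in the second paragraph: you propose to keep $\lambda^{\mu_r}$ as a free variable while the inner $r-1$ factors are ``already analytically continued and evaluated along'' their curves. But all $r$ exponents are functions of the \emph{same} parameter $\lambda$ (or $\kappa$); one cannot freeze the inner factors at their limiting values and continue only the outer one, because the comparison between the joint limit along the curve and the iterated limit is exactly what has to be proved. (There is also an index slip: the inner factors carry exponents $\lambda^{\mu_1},\ldots,\lambda^{\mu_{r-1}}$, not $\lambda^{\mu_2},\ldots,\lambda^{\mu_r}$.) To repair the proof you would need to state and prove something equivalent to Lemma~\ref{pilligt}, in particular the homogeneity argument showing $\gamma^\sigma(\kappa^{\mu_1},\ldots,\kappa^{\mu_r})$ is holomorphic at $\kappa=0$ with the correct value.
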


In view of Proposition~\ref{vogelex} we get

\begin{cor}\label{vogelbike}
If $h_1,\ldots,h_n$ is a Vogel sequence of some ideal at some point $x$
and $\mu_j$ are as in Proposition~\ref{prop3-4},  then
 the Lelong current of the
associated Vogel cycle is given as the value at $\lambda=0$ of the function
$$
\lambda \mapsto \bigwedge_{k=1}^n M^{h_k, \lambda^{\mu_k}}= \bigwedge_{k=1}^n
\big( 1-|h_k|^{2\lambda^{\mu_k}}+\dbar|h_k|^{2\lambda^{\mu_k}}\w
\partial \log|h_k|^2/2\pi i\big)
$$
\end{cor}

\section{Bochner-Martinelli currents and Vogel cycles}\label{storskurk}

For a tuple $f=(f_0,\ldots, f_m)$ of holomorphic functions and $\beta=[\beta_0:\ldots:\beta_m]\in\P^m$ we write 
$\beta\cdot f:= \beta_{0} f_0 + \cdots +\beta_{m} f_m$. Note that $M^{\beta \cdot f}$ only depends on $\beta\in\P^m$ 
and not on the choice of homogeneous coordinates. 
Our first result in this section relates Lelong numbers of
Bochner-Martinelli currents to multiplicities of Vogel cycles.

\begin{thm}\label{meanvaluethmlelong} Let $f=(f_0,\ldots,f_m)$ be a tuple of holomorphic functions in $X$, 
pick $x\in X$, and let $Z=\{f=0\}$. 
Then for $k\geq 0$, and a generic choice of $\alpha=(\alpha_1,\ldots, \alpha_k)\in (\P^m)^k$, 
\begin{equation}\label{hutt3}
\ell_x\big({\bf 1}_Z 
[\alpha_k\cdot f]\w\cdots\w [\alpha_1\cdot f]\big)=\ell_x (M^f_k)\;.
\end{equation}
Here the current on the left hand side of \eqref{hutt3} should be interpreted as ${\bf 1}_Z$ if $k=0$. 
\end{thm}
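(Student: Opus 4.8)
The plan is to deduce \eqref{hutt3} from the averaged identity recorded just before the statement, once we know that the integrand there is \emph{generically constant}. A preliminary reduction: for $k=0$ the asserted equality is $\ell_x({\bf 1}_Z)=\ell_x({\bf 1}_Z)$, so assume $k\ge 1$; and by \eqref{uppdelning} we may discard the irreducible components of $X$ contained in $Z$, since on such a component $X_i$ we have $M^f_k\,{\bf 1}_{X_i}=0$ and, as $\alpha_j\cdot f$ vanishes identically on $X_i$, also ${\bf 1}_Z[\alpha_k\cdot f]\wedge\cdots\wedge[\alpha_1\cdot f]\,{\bf 1}_{X_i}=0$ by Lemma~\ref{snittlemma}. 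Put
\[
g(\alpha):=\ell_x\big({\bf 1}_Z[\alpha_k\cdot f]\wedge\cdots\wedge[\alpha_1\cdot f]\big),\qquad \alpha\in(\P^m)^k.
\]
As recorded before the statement, \eqref{hutt2} together with the fact that $\ell_x(\cdot)$ commutes with positive integrals (monotonicity of $r\mapsto c_{n,k}\,r^{-2(n-k)}\int_{|z|<r}(\cdot)\wedge(dd^c|z|^2)^{n-k}$ in a local embedding, plus monotone convergence) gives
\[
\ell_x(M^f_k)=\int_{(\P^m)^k}g(\alpha)\,d\sigma(\alpha).
\]
Hence it suffices to show that $g$ is constant on a Zariski-dense open subset $U\subset(\P^m)^k$: then the right-hand side above equals that constant value, because $(\P^m)^k\setminus U$ is $\sigma$-null, and \eqref{hutt3} holds for every $\alpha\in U$.

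To prove the generic constancy of $g$, restrict to the Zariski-dense open set of $\alpha$ for which $(\alpha_1\cdot f,\ldots,\alpha_k\cdot f)$ extends to a Vogel sequence of $\J$ at $x$ (generic linear combinations of $f_0,\ldots,f_m$ do so, as recalled in the introduction). For such $\alpha$, Proposition~\ref{produktprop} identifies ${\bf 1}_Z[\alpha_k\cdot f]\wedge\cdots\wedge[\alpha_1\cdot f]$ with the Lelong current of the codimension-$k$ Vogel component $X_k^Z$ of that Vogel sequence, so that $g(\alpha)=\mult_x X_k^Z$. Since $X_k^Z$ depends only on $\alpha_1,\ldots,\alpha_k$, the theorem of Gaffney--Gassler \cite{GG} quoted in the introduction (namely that $\mult_x X_k^Z$ is independent of the Vogel sequence for a generic one, its common value being the Segre number $e_k(x)$) says precisely that $g(\alpha)=e_k(x)$ on a Zariski-dense open set. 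Consequently $\ell_x(M^f_k)=\int g\,d\sigma=e_k(x)=g(\alpha)$ for generic $\alpha$, which is \eqref{hutt3}.

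The only genuinely nontrivial step is the passage from the averaged identity to the pointwise-generic one, i.e.\ the generic constancy of $g$; through Proposition~\ref{produktprop} this is exactly the well-definedness of the Segre numbers established in \cite{GG}, so nothing new is required. If one prefers not to cite \cite{GG} at this point, the conclusion can be obtained directly: on the locus where the intersection $[\alpha_k\cdot f]\wedge\cdots\wedge[\alpha_1\cdot f]$ is proper, $g(\alpha)$ is the multiplicity at the fixed point $x$ of an algebraic family of effective cycles of constant dimension, hence upper semicontinuous (cf.\ \eqref{limsup}) and constructible in $\alpha$; it is therefore constant, equal to its minimum $c_0$, on a Zariski-dense open $U$, and $\ge c_0$ off a proper subvariety, so that $\int g\,d\sigma=c_0$ and thus $\ell_x(M^f_k)=c_0=g(\alpha)$ for $\alpha\in U$.
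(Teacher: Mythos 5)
Your overall strategy (average the identity $\ell_x(M^f_k)=\int_\alpha g(\alpha)\,d\sigma$ and argue that the integrand $g(\alpha)=\ell_x({\bf 1}_Z[\alpha_k\cdot f]\w\cdots\w[\alpha_1\cdot f])$ is generically constant) is genuinely different from the paper's proof, and the averaging step itself, including the interchange of $\ell_x$ with the integral, is fine. The problem is that the generic constancy of $g$ is exactly the hard content of the theorem, and neither of your two routes establishes it. The ``direct'' route asserts that $\alpha\mapsto V^{\alpha\cdot f}_k$ is an algebraic family of cycles and that $g$ is therefore upper semicontinuous and Zariski-constructible. But the Vogel construction interleaves the operators $[H]\w$ with the restriction ${\bf 1}_Z$ (discarding the components that fall into $Z$ at each stage), and this is not an algebraic or even continuous operation in $\alpha$: if $\alpha^j\to\alpha$, the convergence of the truncated products $[\alpha^j_k\cdot f]\w\cdots\w[\alpha^j_1\cdot f]$, and of their restrictions to $Z$, is precisely what Proposition~\ref{prop6-2} and the induction in the proof of Theorem~\ref{purra} are needed for --- and there the required hypothesis \eqref{skata} is supplied \emph{by} Theorem~\ref{meanvaluethmlelong}. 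So the semicontinuity you invoke via \eqref{limsup} is not available without already knowing the conclusion; as stated, the ``direct'' argument is circular. The first route, citing \cite{GG} for the well-definedness of $\mult_x V^h_k$, is not logically circular (that result has an independent proof), but the paper explicitly sets out to \emph{reprove} it --- Section~\ref{sectionsegre} derives the coincidence of $\ell_x(M^f_k)$ with the Gaffney--Gassler Segre number \emph{from} this theorem --- and you would additionally have to check that generic $\alpha\in(\P^m)^k$ yields a Vogel sequence that is generic in the sense required by \cite{GG}, which you do not do.

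For comparison, the paper avoids any limiting or genericity-in-families argument: it pulls everything back to a normal modification $\pi$ principalizing $\J(f)$ and the maximal ideal at $x$, notes that each $[\alpha_\ell\cdot f']$ is $dd^c$-cohomologous to $\omega_f$ via an explicit global current $c_\ell=\log(|f'|^2/|\alpha_\ell\cdot f'|^2)$, and packages the difference of the two Lelong numbers (computed upstairs against $[D_\xi]\w\omega_\xi^{n-k-1}$ as in Proposition~\ref{heldag}) as $dd^c$ of a normal $(n-1,n-1)$-current supported at $x$, which vanishes by Lemma~\ref{polkagris}. If you want to salvage your averaging approach, you must either prove the generic constancy of $g$ by some such cohomological device, or first establish the continuity of $\alpha\mapsto{\bf 1}_Z[\alpha_k\cdot f]\w\cdots\w[\alpha_1\cdot f]$ on the Vogel locus without appealing to \eqref{skata}.
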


Assume that $f=(f_0,\ldots,f_m)$ generates the ideal sheaf $\J$. Then 
$\alpha_1\cdot
f,\ldots, \alpha_n\cdot f$ is a Vogel sequence for
a generic choice of $\alpha=(\alpha_1,\ldots, \alpha_n)\in(\P^m)^n$. By
Theorem~\ref{meanvaluethmlelong} and Proposition~\ref{produktprop}, 
\begin{equation}\label{mintgron}
\mult_x V^{\alpha\cdot f}_k = \ell_x (M^f_k)
\end{equation}
and by Proposition ~\ref{remarkheldag} the right hand side only depends
on $\J$ and not on the particular choice of
generators $f$; in fact, it only depends on the integral closure of
$\J$. 
Thus this gives an independent proof of Gaffney-Gassler's result, \cite[Section~2]{GG}, that the
multiplicities of Vogel cycles $V^h$ are independent of $h$ for
generic $h$, which guarantees that the Segre numbers are
well-defined. Also, \eqref{ormens} immediately follows from
\eqref{mintgron}.


\begin{proof}
Choose a normal modification $\pi\colon \widetilde X\to X$ such that $\J(f)\cdot\O_{\widetilde X}$ is principal; 
we will use the notation from Section \ref{bmcurrents}. Assume moreover that the pullback of the maximal ideal at $x$ 
is principal, and let $D_\xi$ and $\omega_\xi$ be the corresponding divisor and Chern form, 
obtained from a tuple $\xi$ that defines the maximal ideal at $x$. 

Let $W$ be any irreducible subvariety of $\widetilde X$. Since $f'$ is nonvanishing on  $W$ it follows that
for $\beta$ outside a hypersurface in $\P^m$, the section $\beta\cdot f'$ is not vanishing identically on $W$.
By induction it follows that there is a  Zariski-open dense subset $A\subset (\P^m)^n$ such that
for each $\alpha=(\alpha_1,\ldots, \alpha_n)\in A$, the sequence
$\alpha_1\cdot f',\ldots ,\alpha_n \cdot f'$ is a geometrically regular sequence on each component
of $\widetilde X$, $|D_f|$, $|D_\xi|$, and on the support of $[D_\xi]\w[D_f]$.


Since $\pi^*(\alpha_\ell\cdot f)=f^0\, \alpha_\ell\cdot f'$, 
we have that
$
[\alpha_\ell\cdot f]=
\pi_*\big([D_f]+[\alpha_\ell\cdot f']\big)
$
and if $\alpha\in A$, in light of \eqref{rakneregel}, thus 
$$
[\alpha_2\cdot f]\w [\alpha_1\cdot f]=
\pi_*\big([D_f]\w [\alpha_1\cdot f']+[\alpha_2\cdot f']\w[\alpha_1\cdot f']\big).
$$
By induction,
\begin{multline}\label{ormbo}
[\alpha_k\cdot f]\w\cdots \w[\alpha_1\cdot f]=\\
\pi_*\big([D_f]\w[\alpha_{k-1}\cdot f']\w\cdots\w[\alpha_1\cdot f']+[\alpha_k\cdot f']\w\cdots
\w[\alpha_1\cdot f']\big),
\end{multline}
and so
\begin{equation}\label{ormbo2}
{\bf 1}_Z [\alpha_k\cdot f]\w\cdots \w[\alpha_1\cdot f] = 
\pi_*\big([D_f]\w[\alpha_{k-1}\cdot f']\w\cdots\w[\alpha_1\cdot f']\big) 
\,.
\end{equation}
Here we have used that ${\bf 1}_{D_f} [\alpha_k\cdot f']\w\cdots
\w[\alpha_1\cdot f']$ vanishes by Lemma ~\ref{polkagris}, 
and that 
\begin{equation}\label{stod}
{\bf 1}_Z\, (\pi_*\tau)=\pi_*({\bf 1}_{D_f}\tau). 
\end{equation}

For $k=0,1$,  \eqref{hutt3} follows from \eqref{MAlikhet0}, \eqref{data2} and \eqref{ormbo2}; 
in fact, the currents in \eqref{hutt3} coincide in these cases. 
Let us now assume that $k\geq 2$. 
We claim that there is a normal current $\A_k$  such that 
\begin{equation}\label{ormbo3}
dd^c\A_k=[D_\xi]\w\omega_\xi^{n-k-1}\w [D_f]\w\big(\omega^{k-1}_f-[\alpha_{k-1}\cdot f']\w\cdots\w[\alpha_1 
\cdot f']\big).
\end{equation}
For $\ell=1,\ldots, k$, $\log|\alpha_\ell\cdot f'|^2$ defines a singular metric
on $L^{-1}$ with first Chern form $[\alpha_\ell\cdot f']$, cf., ~\eqref{omegadef}, and 
thus $[\alpha_\ell\cdot f']$ is $dd^c$-cohomologous to $\omega_f$. More precisely, 
$c_\ell:=\log(|f'|^2/|\alpha_\ell\cdot f'|^2)$ is a global current on $\widetilde X$ and 
$\omega_f-[\alpha_\ell\cdot f']=dd^c  c_\ell$. 
Now, let 
$$
\A_k:=[D_\xi]\w\omega_\xi^{n-k-1}\w[D_f]\w
\sum_{\ell=1}^{k-1} \omega_f^{k-\ell-1}\w c_\ell\w [\alpha_{\ell-1}\cdot f']\w\cdots\w[\alpha_1\cdot f']\;. 
$$
Then $\A_k$ is normal. Since $\alpha_\ell\cdot f'$ does not vanish identically on any irreducible component
of (the support of) $[D_\xi]\w[D_f]\w[\alpha_{\ell-1}\cdot f']\w\cdots\w[\alpha_1\cdot f']$ it follows
from  Lemma~\ref{snittlemma} and the discussion after the proof of
that lemma that \eqref{ormbo3} holds.
From (the proof of) Proposition~\ref{heldag}  and \eqref{ormbo2} we get 
\begin{equation}\label{mvthm0}
dd^c\pi_*(\A_k) = 
\big (\ell_x (M_k^f)- \ell_x ({\bf 1}_Z [\alpha_k\cdot f] \w 
\cdots \w [\alpha_1 \cdot f]) \big ) [x] .
\end{equation}
On the other hand, $\pi_*\A_k$ is a normal $(n-1,n-1)$-current, 
and so since it has support at $x$, 
it vanishes according to Lemma \ref{polkagris}. 
\end{proof}

Our next result concerns mean values of Bochner-Martinelli
currents. In particular, it says that $M^f$
can be represented  a mean value of Vogel cycles.

\begin{thm}\label{meanvaluethm}
Assume that $f=(f_0,\ldots,f_m)$ is a tuple of holomorphic functions on $X$.  
Then 
\begin{equation}\label{hutt2}
M^f_k= \int_{\alpha\in(\P^m)^k}
{\bf 1}_Z [\alpha_k\cdot f]\w\cdots\w [\alpha_1\cdot f]
\end{equation}
where $Z=\{f=0\}$. 
Moreover, if  $\nu\ge\min(m+1,n+1)$, then
\begin{equation}\label{hutt1}
M^f=
\int_{\alpha=(\alpha_1,\ldots, \alpha_\nu)\in(\P^m)^{\nu}}M^{\alpha_{\nu}\cdot f}\w\cdots\w M^{\alpha_1\cdot f}.
 \end{equation}
\end{thm}


For the  proof we will use  the following lemma which is a simple variant
of Crofton's formula that should be well-known so we omit the proof,
see also \cite{aswy}.

\begin{lma}\label{snurr}
If  $\phi$ is a non-vanishing holomorphic $(m+1)$-tuple on $X$, 
then, in the sense of currents, 
$$
\int_{\beta\in\P^m}[\beta\cdot\phi]d\sigma(\beta)=dd^c\log|\phi|^2,
$$
where $d\sigma$ is the normalized Fubini-Study metric.
\end{lma}

\begin{proof}[Proof of Theorem~\ref{meanvaluethm}]
We use the same notation as in the  proof of Theorem~\ref{meanvaluethmlelong}.
In view of Lemma~\ref{snurr} and \eqref{omegadef} we have that
\begin{equation}\label{plats}
\int_{\beta\in\P^m}[\beta\cdot f']d\sigma(\beta)=\omega_f.
\end{equation}
Since all currents are positive we can apply Fubini's theorem and get 
\eqref{hutt2} from \eqref{ormbo2} by repeated use of \eqref{plats},
cf., \eqref{data2}.

We now prove \eqref{hutt1}. By \eqref{MAlikhet0} and \eqref{data2},  
$$
M^{\alpha_\ell\cdot f}=M_0^{\alpha_\ell\cdot f}+M_1^{\alpha_\ell\cdot f}=
{\bf 1}_{\alpha_\ell\cdot f}+[\alpha_\ell\cdot f].
$$
As in the proof of Proposition~\ref{vogelex} we get, for generic $(\alpha_1,\ldots,\alpha_{\nu})\in (\P^m)^{\nu}$, that
\begin{equation*}
M^{\alpha_{\nu}\cdot f}\w\cdots\w M^{\alpha_1\cdot f} =
\sum_{j=0}^{\nu} {\bf 1}_Z [\alpha_j\cdot f]\wedge \cdots \wedge [\alpha_1\cdot f] +
{\bf 1}_{X\setminus Z} [\alpha_{\nu}\cdot f]\wedge \cdots \wedge [\alpha_1\cdot f].
\end{equation*}
Moreover, it follows from \eqref{ormbo} and \eqref{ormbo2} that
\begin{equation*}
{\bf 1}_{X\setminus Z} [\alpha_{\nu}\cdot f]\wedge \cdots \wedge [\alpha_1\cdot f]=
\pi_*\big([\alpha_{\nu}\cdot f']\wedge \cdots \wedge [\alpha_1\cdot f']\big).
\end{equation*}
Hence, using \eqref{hutt2} and Lemma ~\ref{snurr}, we conclude that 
\begin{multline*}
\int_{\alpha=(\alpha_1,\ldots, \alpha_\nu)\in(\P^m)^{\nu}} 
M^{\alpha_{\nu}\cdot f}\w\cdots\w M^{\alpha_1\cdot f}
= M^f + \pi_* (dd^c\log|f'|^2)^{\nu}=M^f;
\end{multline*}
indeed, $(dd^c\log|f'|^2)^{\nu}=0$ since $\nu\geq \min (m+1,n+1)$. 
\end{proof}

By arguments as in the proof of  Theorem \ref{meanvaluethm} one can
check that 
\begin{equation}\label{skomakare}
 \int_{\alpha\in(\P^m)^k}
{\bf 1}_{X\setminus Z}[\alpha_k\cdot f]\w\cdots\w [\alpha_1\cdot f]=
{\bf 1}_{X\setminus Z}(dd^c\log|f|^2)^k.
\end{equation}



\section{Proof of the  generalized King formula (Theorem~\ref{genking})}\label{sectiongenking}
Let $X$ and $\J$ be as in Theorem \ref{genking} and let $Z$ be the variety of $\J$.   
The  {\it (Fulton-MacPherson) distinguished varieties}  
associated with $\J$ are defined in the following way, cf., \cite{fult}:
Let $\nu\colon X^+\to X$ be the normalization of the blow-up 
of $X$ along $\J$ and let $E$ be the exceptional divisor of $\nu$. Then $Z_j\subset X$ is a 
distinguished variety if it is the image under $\nu$ of an irreducible component of $E$. 
Let $Z^k_j$ be the distinguished varieties of codimension ~$k$. 
Also, we define the irreducible components of $X$ contained in $Z$ to be distinguished varieties (of codimension $0$).
 
\smallskip

Let us first consider the case $k=0$. By \eqref{uppdelning} we may assume that $X$ is irreducible. Then either $\J=(0)$ 
or $Z$ is a proper subvariety of $X$. In the first case $M_0^f=M_0^0=\1_X$ and if $h$ is a Vogel sequence of $\J$, 
then necessarily $h=(0,\ldots, 0)$ and so $V^h=V^h_0=X$. In the second case $M_0^f=0$ and if $h$ is a Vogel sequence 
of $\J$, then $V^h_0=X^{Z}_0=0$, since $X\not\subset Z$. It follows that Theorem \ref{genking} holds for $k=0$.

Next, consider the case $k\geq 1$. 
Let  $\pi\colon \widetilde X\to X$ be a normal modification such that $\J\cdot \mathcal O_{\widetilde X}$ principal. 
We use the notation from Section~\ref{bmcurrents}, so that $M^f_k=\pi_*([D]\w \omega^{k-1}_f)$, where $D= D_f$. 
Moreover, we let $D^k$ denote the components of $D$ that are mapped to sets
of  codimension $k$ in $X$. 
Note that $D=D^p+\ldots +D^n$, if  $p=\codim Z$.

If $\ell>k$, then $\pi_*([D^\ell]\w\omega_f^{k-1})$
is a positive closed $(k,k)$-current  with support on a variety of codimension $\ell>k$, 
and hence it must vanish in view of Lemma~\ref{polkagris}. 
Thus
\begin{equation}\label{boll}
M^f_k=S^f_k+N^f_k,
\end{equation}
where 
\begin{equation}\label{god}
S^f_k =\pi_*\big([D^k]\w  \omega_f^{k-1}\big), \quad
N^f_k=\pi_* \Big(\sum_{\ell<k} [D^\ell]\w  \omega_f^{k-1}\Big).
\end{equation}
Note that $M_k^f=0$ for $k<p$ and $N_p^f=0$. 
We claim that \eqref{boll} is the Siu decomposition of $M^f_k$, cf., Section~\ref{curranal}.
By Lemma~\ref{polkagris},
$S^f_k$ is the  Lelong current of a cycle of codimension $k$, so it is enough to show that $N^f_k$ does 
not carry any mass on varieties of codimension $k$.  
Let $W\subset X$ be such a  variety. By \eqref{stod}, 
\begin{equation}\label{world}
{\bf 1}_W \pi_*([D^\ell]\w\omega_f^{k-1})=
\sum_j\pi_*({\bf 1}_{\pi^{-1}W} [D^\ell_j]\w\omega_f^{k-1}),
\end{equation}
where $D_j^\ell$ are the irreducible components of $D^\ell$.  Then, since $\ell<k$, 
$\pi^{-1}(W)$ does not contain any component $D_j^\ell$, 
thus  each term in the right hand side of \eqref{world} vanishes,  and thus the claim follows. 

Since \eqref{boll} is the Siu decomposition of $M_k^f$, it follows that $S^f_k$ is independent of 
$\pi:\widetilde X\to X$. If we take $\pi$ to be the normalization of the blow-up of $\J$, we see 
that the $Z_j^k$ in \eqref{king} has to be among the distinguished varieties of $\J$. By 
Proposition~\ref{heldag} (for $r=1$), the Lelong number of $M_k^f$ is an integer at each point, and 
since the Lelong number of $N_k^f$ generically vanishes on each $Z_j^k$, we conclude that the $\beta_j^k$ and $\ell_x(N_k^f)$ 
are integers. 
That $\ell_x(N_k^f)$ is an integer can also be seen directly by copying the proof of Proposition~\ref{heldag}. 
Moreover, cf.,  Proposition~\ref{remarkheldag}, $\beta_j^k$ and $\ell_x(N_k^f)$ only depend on the integral closure of $\J$ at $x$.

We shall now see that the coefficients $\beta_j^k$ of the distinguished varieties are, in fact, $\geq 1$, following
the proof of Corollary 5.4.19, in \cite{Lazar}. 
The blow-up $\pi_{\J}\colon {\rm Bl}_{\J} X\to X$
of $X$ along  $\J$ can be seen as the subvariety of $X\times\P^m_t$
defined by the equations $t_jf_k-t_kf_j=0$, where $0\leq j<k\leq m$. Moreover, the line bundle
associated with the exceptional divisor
is the pullback of $\Ok_{\P^m_t}(-1)$ from $\P^m$ to ${\rm Bl}_{\J} X$, so 
$\omega_t=dd^c\log|t|^2$ represents minus its first Chern class.
This form is strictly  positive on the fibers of $\pi_\J$, and 
since the normalization   
$X^+\to {\rm Bl}_\J X$ is a finite map, the pullback $\omega$ of $\omega_t$ to 
$X^+$ remains strictly positive on the fibers of $\nu\colon X^+\to X$ as well. Let $E_j$ be one of 
the irreducible component of the exceptional divisor of $\nu$. We conclude that $\nu_*([E_j]\w\omega^{k-1})$ is 
a positive integer times $[Z_j^k]$, where $Z_j^k:=\nu(E_j)$. On the other hand, this current is unaffected
if we replace $\omega$ by $\omega_f$ since these two forms are  first Chern forms of the same line bundle.
It follows that  $\beta_j^k\geq 1$.

We saw in the discussion after Theorem ~\ref{meanvaluethmlelong} 
that $\ell_x(M_k^f)$ is equal to the $k$:th Segre number of $\J$ at $x$. 
Next, we show that the fixed Vogel components of $\J$ are precisely the $S_k^f$. Fix a point $x\in X$. 
As in proof of Theorem~\ref{meanvaluethmlelong} we can construct, for $k\geq 1$ and a generic $\alpha \in (\P^m)^n$, 
a normal current $\A_k$ with support on $|D^k|$ such that 
$$
dd^c \A_k =[D^k]\w([\alpha_{k-1}\cdot f']\w\cdots\w[\alpha_1\cdot f']-\omega_f^{k-1}).
$$
Now $\pi_* \A_k$ is a normal $(k-1,k-1)$-current with support on $\bigcup _jZ^k_j$, and thus it 
vanishes by Lemma \ref{polkagris}. It follows that 
$\pi_*\big([D^k]\w[\alpha_{k-1}\cdot f']\w\cdots\w[\alpha_1\cdot f'])=S^f_k 
$
and hence $S^f_k$ occurs in a generic Vogel cycle at $x$, meaning that $S^f_k$ is a fixed Vogel cycle.  
On the other hand, the cycles 
\begin{equation}\label{april}
\pi_*(\sum_{\ell<k}[D^\ell]\w[\alpha_{k-1}\cdot f']\w\cdots\w[\alpha_1\cdot f'])
\end{equation}
must be moving. Indeed, by (the proof of) Theorem \ref{meanvaluethm}, taking mean values of \eqref{april} over all 
$\alpha\in (\P^m)^k$, we get the current $N_k^f$, which carries no mass on any variety of codimension $k$, as seen above. 

By arguments as in the proof of Theorem \ref{meanvaluethmlelong} one shows that for a generic choice of $\alpha\in (\P^m)^k$, 
\begin{equation}\label{loud}
\ell_x(\1_{X\setminus Z} [\alpha_k\cdot f]\w\cdots\w [\alpha_1\cdot f]) =
\ell_x (\1_{X\setminus Z} (dd^c\log |f|^2)^k)
\end{equation}
cf. \eqref{skomakare}. However, it follows from Proposition~\ref{produktprop} that the left hand side of \eqref{loud} 
is equal to $m_k(x)$. 
This concludes the proof of Theorem~\ref{genking}.

\begin{remark}
One can see more directly that only the distinguished varieties occur
in $S^f_k$ if $S_k^f$ is defined by \eqref{god} from an arbitrary normal modification $\pi\colon \widetilde X\to X$. 
To begin with, $\pi$ factors over $\nu$, i.e., there exists a modification $\widetilde \nu\colon \widetilde X \to X^+$ such 
that $\pi=\nu \circ \widetilde \nu$. 
If $\omega_+$ is the form associated with $\J \cdot \mathcal O_{X^+}$ in $X^+$, then $\tilde\nu^* \omega_+=\omega_f$. 

Let $D^k_j$ be an irreducible component of the divisor $D^k$. 
Since $|D^k_j|\subset \pi^{-1}(Z)$, it follows that $\widetilde\nu( |D_j^k|)$  is
contained in one of the components $E_j$ of $E$ in $X^+$. If 
$\widetilde\nu(|D^k_j|)$ has codimension $\ge 1$ in $E_j$, then 
$\widetilde\nu_*([D^k_j]\w\omega_f^{k-1})=(\widetilde\nu_*[D^k_j])\w \omega_+^{k-1}$ vanishes by Lemma~\ref{polkagris}. 
Hence
$\pi_*([D^k_j]\w\omega^{k-1})=\nu_*\tilde\nu_*([D^k_j]\w\omega^{k-1})$ vanishes  unless $\widetilde\nu (|D_j^k|)=E_j$, 
in which case $\pi(|D_j^k|)$ is a distinguished variety.
\end{remark}

Assume that $f_0,\ldots, f_{p-1}$ is a regular sequence.  From the theory for proper intersections
we know that 
$$
[f_{p-1}]\w\cdots\w[f_0]=\sum\beta_j[Z_j]
 $$
where $Z_j$ are the irreducible components of $Z=\{f=0\}$, and that the intersection only 
depends on the ideal generated by the $f_j$, cf., Remark ~\ref{pia}. 
In particular, the right hand side 
is unaffected if we replace $f_j$ by $\alpha_j\cdot f$ for generic $\alpha_j$. From 
\eqref{hutt2} we conclude that
\begin{equation}\label{svans}
M^f_p= [f_{p-1}]\w\cdots\w[f_0].
\end{equation}

\begin{proof}[Proof of Corollary~\ref{smultron}]
If $Z$ is smooth, then locally there are coordinates $(z,w)$ so that 
$Z=\{w_1=\cdots=w_p=0\}$. In view of \eqref{svans} we have that 
$$
M_p^w=[w_p]\w\cdots\w[w_1]=[Z],
$$
and hence $\ell_x (M^w_p)=1$ for $x\in  Z$. If $Z$ is reduced and  $f$ generates the ideal $\J_Z$,  
therefore  $\ell_x (M^f_p)=1$
for all smooth  points $x\in Z$ in view of Proposition~\ref{remarkheldag}. From Theorem~\ref{genking} we know that
$M^f_p=\sum\beta_j[Z_j]$. Since the smooth points are dense, we conclude that $\beta_j=1$ for each $j$.
\end{proof}

\section{The minimality property}\label{minimal}


Recall that the \emph{lexicographical order} on $\R^N$ is a total order, defined by 
$(x_1,\ldots, x_N)\leq_{\lex} (y_1,\ldots, y_N)$ if there is an $1\leq\ell\leq N$ such that $x_i=y_i$ for $i\leq \ell$ 
and $x_\ell<y_\ell$. We let  $\min_\lex$ denote the minimum with respect to the lexicographical order. 
We will now give a proof that Tworzewski's extended index of
intersection coincides with the list of Segre numbers. 
More precisely we will prove: 



\begin{thm}\label{purra}
Let $\J$ be a coherent ideal sheaf on $X$ and let $e(x)$ be the list of
associated Segre numbers at $x$. Then 
\eqref{mintlex} holds, 
where the $\min_{\lex}$ is taken over all Vogel sequences $h$ of ideals with the same integral 
closure as $\J_x$. 

Moreover, if $f$ is a tuple of generators of $\J$ 
(or any ideal with the same integral clousure as $\J$) then it suffices to take 
the $\min_{\lex}$ over all Vogel sequences of the form $\alpha\cdot f=(\alpha_1\cdot f,\cdots,\alpha_n\cdot f)$, 
where $\alpha =(\alpha_1,\ldots,\alpha_n)\in (\P^m)^n$. 
\end{thm}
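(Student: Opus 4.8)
The plan is to reduce everything to the two mean-value theorems just proved. First I would prove the inequality $e(x) \le_{lex} \mult_x V^h$ for \emph{every} Vogel sequence $h$ of an ideal $J$ with the same integral closure as $I$. By Proposition~\ref{produktprop}, $\mult_x V^h_k = \ell_x\big({\bf 1}_Z[H_k]\w\cdots\w[H_1]\big)$, where $H_j$ is the divisor of $h_j$. On the other hand, by Theorem~\ref{meanvaluethmlelong}, $e_k(x) = \ell_x(M^f_k) = \ell_x\big({\bf 1}_Z[\alpha_k\cdot f]\w\cdots\w[\alpha_1\cdot f]\big)$ for generic $\alpha$, where $f$ generates $J$ (and $e_k(x)$ only depends on the integral closure of $J$, hence of $I$, by Proposition~\ref{prop3-1} and the remark at the start of Section~\ref{sectionsegre}). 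So I need: for an arbitrary Vogel sequence $h$ in $J_x$, the lexicographically least value of $\big(\ell_x({\bf 1}_Z[H_1]),\ell_x({\bf 1}_Z[H_2]\w[H_1]),\dots\big)$ over all such $h$ is attained, generically, by $h = \alpha\cdot f$, and equals $e(x)$. This is where the inductive structure of the Vogel construction enters: if $h_1,\dots,h_j$ and $h_1,\dots,h_{j-1},h_j'$ agree on the first $j-1$ moving components and $|H_j'|$ is chosen generically (avoiding the finitely many ``bad'' subvarieties), then the $j$-th entry can only decrease or stay the same, because a non-generic $h_j$ may force extra components into $Z$. The monotonicity of the first $k-1$ entries is handled by the linearity relations \eqref{prod1}, \eqref{tundra} and Lemma~\ref{polkagris}: once the first $k-1$ entries are minimized we may freely improve the $k$-th without disturbing them.

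Second, I would show the minimum is attained at $h = \alpha\cdot f$ with $f$ a fixed generating tuple and $\alpha \in (\P^m)^n$ generic. The key point is that any Vogel sequence $h$ with $h_j \in J_x$ can be perturbed to one of the form $\alpha\cdot f$ without increasing any entry of $\mult_x V^h$: write $h_j = \sum_i g_{ij} f_i$ with $g_{ij}$ holomorphic; then $h_j$ and $\alpha_j\cdot f$ differ by a ``generic constant-coefficient'' deformation, and the Vogel cycle associated with a generic member of the family of sequences $(g + t\,\text{generic})\cdot f$ has entries that are $\le_{lex}$ those of $V^h$, by the semicontinuity \eqref{limsup} of Lelong numbers combined with the fact that, along a generic line in the parameter space, the first place a component gets ``pushed into $Z$'' happens no later than for the special sequence $h$. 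This is essentially the argument already packaged in Theorem~\ref{meanvaluethmlelong}: on the normal modification $\pi\colon\widetilde X\to X$, the sequence $\alpha_1\cdot f',\dots,\alpha_k\cdot f'$ is geometrically regular on $\widetilde X$, on $|D_f|$, and on the relevant intersections, for $\alpha$ outside a proper subvariety of $(\P^m)^n$; the entries of $\mult_x V^{\alpha\cdot f}$ are then genuinely intersection numbers on $\widetilde X$ that are constant in $\alpha$, equal to $e(x)$, and no honest Vogel sequence can do better.

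The main obstacle I anticipate is the first inequality, i.e.\ controlling what happens for a \emph{non-generic} Vogel sequence $h$. Concretely: I must argue that passing from the generic sequence $\alpha\cdot f$ to an arbitrary Vogel sequence $h$ in $J_x$ can only move mass from the moving components $X_k^{X\setminus Z}$ into the fixed part, i.e.\ can only \emph{increase} entries, never decrease them below $e(x)$ — once the earlier entries have been matched. The clean way to see this is to compare, entry by entry, using the normal modification and the fact that $[\alpha_j\cdot f']$ is $dd^c$-cohomologous to $\omega_f$ on $\widetilde X$ (Remark~\ref{gammaremark}, as exploited in \eqref{ormbo3}), while the divisor of a general $h_j = \sum g_{ij}f_i$ pulls back to $D_f$ plus an \emph{effective} divisor that is again cohomologous to $\omega_f$ but need not be geometrically regular; the excess intersection contributes nonnegatively to $\ell_x({\bf 1}_Z[H_k]\w\cdots\w[H_1])$. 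Making the bookkeeping of ``which entry is affected'' precise — so that the comparison respects the lexicographic order rather than just comparing total multiplicities — is the delicate part, and I would do it by an induction on $k$ that fixes the first $k-1$ divisors to be the generic ones and lets only $H_k$ vary, invoking Lemma~\ref{polkagris} to see that improvements in position $k$ leave positions $<k$ untouched. With that in hand, the theorem follows by combining the two inequalities.
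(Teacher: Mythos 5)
Your overall strategy --- reduce to sequences of the form $\alpha\cdot f$ and compare an arbitrary Vogel sequence with a generic one via Theorem~\ref{meanvaluethmlelong} --- is the right one, and you correctly locate the difficulty in making the comparison respect the lexicographic order. But the proposal does not close that difficulty. The claim that for a non-generic $h_k$ ``the excess intersection contributes nonnegatively to $\ell_x({\bf 1}_Z[H_k]\w\cdots\w[H_1])$'' is exactly the statement to be proved, and entry-wise it is \emph{false}: a non-generic choice can make a later entry strictly \emph{smaller} than $e_k(x)$, provided some earlier entry is strictly larger (this is precisely why the theorem asserts a $\min_{lex}$ rather than an entry-wise minimum). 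The proposed fix --- ``an induction on $k$ that fixes the first $k-1$ divisors to be the generic ones and lets only $H_k$ vary'' --- does not address the object you must control, namely $\mult_x V^{h}_r$ for the \emph{given} sequence $h$ whose first $r-1$ divisors are not generic; Lemma~\ref{polkagris} says nothing about how the hypothesis $\mult_x V^h_k=e_k(x)$ for $k\le r-1$ propagates to give a lower bound at level $r$.

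The missing ingredient is a continuity statement. The paper argues by contradiction: if $\mult_xV^{\alpha\cdot f}_k=e_k(x)$ for $k\le r-1$ but $\mult_xV^{\alpha\cdot f}_r<e_r(x)$, pick generic $\alpha^j\to\alpha$ with $\mult_xV^{\alpha^j\cdot f}=e(x)$. Proposition~\ref{prop6-2} --- whose hypothesis \eqref{skata} is exactly the Lelong-number inequalities at levels $\le r-1$ --- then forces convergence of \emph{both} ${\bf 1}_Z[\alpha^j_k\cdot f]\w\cdots\w[\alpha^j_1\cdot f]$ \emph{and} the ${\bf 1}_{X\setminus Z}$-parts; the latter, combined with continuity of proper intersections of the ${\bf 1}_{X\setminus Z}$-part with the next divisor, lets the induction advance one level, and at level $r$ the semicontinuity \eqref{limsup} of Lelong numbers yields $\mult_xV^{\alpha\cdot f}_r\ge e_r(x)$, a contradiction. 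Without Proposition~\ref{prop6-2} (or an equivalent mechanism converting equality of the first $r-1$ entries into convergence of the underlying currents) the core of the proof is missing. A minor further point: your reduction of an arbitrary $h$ to the form $\alpha\cdot f$ via holomorphic coefficients $g_{ij}$ and a deformation is unnecessary --- one simply enlarges the generating tuple to $(f,h_1,\ldots,h_n)$, so that each $h_j$ is a constant-coefficient combination and the second statement applies directly.
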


As mentioned in the introduction, Theorem~\ref{purra} is
known in the  case when
 $\J$ is the pullback to $X$ of the radical sheaf of 
a smooth manifold $A$ in some ambient space.
%
%
For the proof of Theorem~\ref{purra}
we will need the following result; if  $Z$ is smooth this is precisely Theorem~3.4 in \cite{T}.


\begin{prop}\label{prop6-2} Assume that $(W_j)_{j\in \N}$ and $W$ are subvarieties of $X$ of pure dimension 
such that $\lim_{j\rightarrow \infty} [W_j]=[W]$ as currents on $X$. Let $Z$ be a fixed subvariety of $X$, 
let $x$ be a fixed point in $Z$, and assume that
\begin{equation}\label{skata}
\ell_x( {\bf 1}_Z[W])\le  \ell_x({\bf 1}_Z[W_j]).  
\end{equation}
for all $j$. 
Then there is a \nbh $U$ of $x$ in $X$, in which 
$
\lim_{j\rightarrow \infty} ({\bf 1}_Z[W_j]) = {\bf 1}_Z[W]$ and 
$\lim_{j\rightarrow \infty} ({\bf 1}_{X\setminus Z}[W_j])= {\bf 1}_{X\setminus Z}[W]
$.
\end{prop}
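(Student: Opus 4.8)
The plan is to reduce the statement to the elementary fact that a limit of positive closed currents cannot lose mass at a point where the Lelong number is already as small as possible, combined with a decomposition argument for the two pieces $\mathbf 1_Z$ and $\mathbf 1_{X\setminus Z}$. First I would fix a relatively compact \nbh $U$ of $x$ and work there. Since $[W_j]\to[W]$, after passing to a subsequence we may assume the positive currents $\mathbf 1_Z[W_j]$ and $\mathbf 1_{X\setminus Z}[W_j]$ converge (their masses are locally bounded by the mass of $[W_j]$, which converges); call the limits $S$ and $T$ respectively. Both are positive closed $(p,p)$-currents on $U$, where $p=\codim W$, and $S+T=[W]$ because $\mathbf 1_Z[W_j]+\mathbf 1_{X\setminus Z}[W_j]=[W_j]$. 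Moreover $S$ has support in $Z$: indeed each $\mathbf 1_Z[W_j]$ is supported on $Z$ and $Z$ is closed. Hence by the Siu-type decomposition, writing the irreducible components of $W$ contained in $Z$ as $W'$ and the others as $W''$, positivity of $S$ and $T$ together with $S+T=[W]=[W']+[W'']$ forces $S\le[W']\le[W]$ and therefore $T\ge[W'']$; but $T$ also has no mass carried by $Z$ outside a set of higher codimension unless it comes from $W''$, so in fact one gets $S=[W']+R$ and $T=[W'']-R$ for some positive closed $(p,p)$-current $R$ supported in $Z$ whose components of codimension $p$ are among those of $W'$; equivalently $\mathbf 1_Z[W]=[W']$ and the whole question is whether $R=0$.

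The key step is to rule out $R\ne 0$ using the Lelong-number hypothesis \eqref{skata} together with the upper semicontinuity inequality \eqref{limsup}. From \eqref{limsup} applied to $\mathbf 1_Z[W_j]\to S$ we get $\ell_x(S)\ge\limsup_j\ell_x(\mathbf 1_Z[W_j])\ge\ell_x(\mathbf 1_Z[W])$ by \eqref{skata}. On the other hand $S=\mathbf 1_Z[W]+R$ with $R$ positive closed supported in $Z$, so $\ell_x(S)=\ell_x(\mathbf 1_Z[W])+\ell_x(R)$. Combining, $\ell_x(R)\le 0$, hence $\ell_x(R)=0$ and, $R$ being positive closed, this forces $R=0$ in a \nbh of $x$: indeed a positive closed current whose Lelong number vanishes at $x$ cannot contain in its Siu decomposition any component passing through $x$, and by shrinking $U$ we discard the components of $R$ not through $x$. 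Therefore $S=\mathbf 1_Z[W]$ on $U$ and consequently $T=[W]-S=\mathbf 1_{X\setminus Z}[W]$ on $U$. Finally, since every convergent subsequence of $(\mathbf 1_Z[W_j])$ (resp.\ $(\mathbf 1_{X\setminus Z}[W_j])$) has the same limit $\mathbf 1_Z[W]$ (resp.\ $\mathbf 1_{X\setminus Z}[W]$), the full sequences converge, which is the assertion.

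The main obstacle I anticipate is the bookkeeping in the middle step: showing cleanly that the limit $S$ of $\mathbf 1_Z[W_j]$ differs from $\mathbf 1_Z[W]$ only by a positive closed current $R$ supported in $Z$, so that Lelong numbers add as $\ell_x(S)=\ell_x(\mathbf 1_Z[W])+\ell_x(R)$. This rests on $S+T=[W]$, on $\operatorname{supp}S\subset Z$, and on the uniqueness of the Siu decomposition of $[W]$ into the part supported in $Z$ (which is exactly $[W']=\mathbf 1_Z[W]$, a genuine cycle) and a positive closed remainder with support in $X\setminus Z$ generically; the subtle point is that $T$ need not be a cycle a priori, so one argues with masses/Lelong numbers rather than with cycles. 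Once this decomposition is in hand, the semicontinuity inequality \eqref{limsup} and hypothesis \eqref{skata} close the argument immediately, and the passage from subsequences to full sequences is routine.
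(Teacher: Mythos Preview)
Your overall strategy is exactly the paper's: pass to a subsequential limit $S$ of $\mathbf 1_Z[W_j]$, show $S\le \mathbf 1_Z[W]$, then use the Lelong-number hypothesis together with upper semicontinuity \eqref{limsup} to force equality near $x$, and finish by the subsequence argument. However, your write-up contains a sign inconsistency that breaks the argument as stated. In the first paragraph you correctly derive $S\le[W']=\mathbf 1_Z[W]$ (this follows from $T=[W]-S\ge 0$ and $\mathbf 1_Z T=[W']-S\ge 0$), but you then write $S=[W']+R$ and $T=[W'']-R$ with $R\ge 0$; this is the wrong sign and contradicts $S\le[W']$. The correct decomposition is $[W']=S+R$ with $R\ge 0$. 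With your sign, the chain in the second paragraph gives $\ell_x(R)\ge 0$, not $\ell_x(R)\le 0$, so nothing is concluded. With the correct sign one gets $\ell_x(\mathbf 1_Z[W])=\ell_x(S)+\ell_x(R)$, and combining with $\ell_x(S)\ge\ell_x(\mathbf 1_Z[W])$ yields $\ell_x(R)\le 0$, hence $\ell_x(R)=0$.

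There is also a small gap in the step ``$\ell_x(R)=0$ implies $R=0$ near $x$''. For a general positive closed current this is false (the diffuse part of the Siu decomposition can be nonzero with vanishing Lelong number). What saves you here is that $R=[W']-S$ is a closed positive $(p,p)$-current supported on the codimension-$p$ set $|W|\cap Z$, so by Lemma~\ref{polkagris} it is the Lelong current of an effective cycle; then $\ell_x(R)=0$ indeed means no component of that cycle passes through $x$, and $R=0$ in a neighbourhood. The paper makes this explicit by first invoking Lemma~\ref{polkagris} to write the subsequential limit as $[V]$ for an effective cycle $V$, and then comparing the two effective cycles $V$ and $W^Z$ directly.
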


\begin{proof}
Since the currents $[W_j]$ are positive and locally uniformly bounded, 
so are the currents ${\bf 1}_Z[W_j]$. 
Thus, there is a subsequence $({\bf 1}_Z[W_{j_k}])_{k\in \N}$ converging to a 
positive closed current with support 
on $W\cap Z$. By Lemma ~\ref{polkagris} this current is the integration current $[V]$ for some 
effective cycle $V$ (with possibly real coefficients).
Since $[W_j]-{\bf 1}_Z[W_j]$ is positive, so is $[W]-[V]=\lim_k ([W_{j_k}]-{\bf 1}_Z[W_{j_k}])$,  
and since $|V|\subset |Z|$, it follows that
\begin{equation}\label{ormet}
[V]={\bf 1}_Z[V]\le {\bf 1}_Z [W].
\end{equation}
By \eqref{skata} and semicontinuity, \eqref{limsup}, we have that
$$
\ell_x ({\bf 1}_Z[W])\le \limsup_k
 (\ell_x ({\bf 1}_Z[W_{j_k}]))\le \ell_x ([V])\le \ell_x ({\bf 1}_Z [W]).
$$
Thus $\ell_x ({\bf 1}_Z[W])=\ell_x ([V]$), and combined with \eqref{ormet} 
and the fact that $V$ and $W$ are effective cycles, 
it  follows that $[V]={\bf 1}_Z[W]$ in some \nbh of $x$.

Since each subsequence of $({\bf 1}_Z[W_j])_{j\in \N}$ has a subsequence that tends to ${\bf 1}_Z[W]$, 
it follows that 
$\lim_{j\rightarrow \infty} ({\bf 1}_Z[W_j])={\bf 1}_Z[W]$. 
The last statement follows by complementarity. 
\end{proof}


\begin{proof}[Proof of Theorem \ref{purra}] 
Since each Vogel sequence $h$ can be realized as $\alpha\cdot f$ for some choice of $f$ and $\alpha$, 
it is easy to check that the first statement follows from the second one. 
Let $f$ be a tuple of generators of $\J$. 
By definition, $e(x)=\mult_xV^{\alpha\cdot f}$ for almost all
$\alpha$, and thus it is enough to  prove that 
$e(x)\leq_{\lex} \min_{\lex}\mult_x V^{\alpha\cdot f}$
if $\alpha\cdot f$ is a Vogel sequence. 

Suppose that $e(x)\not\leq_{\lex}\min_{\lex}\mult_x V^{\alpha\cdot f}$. 
Then there is an $r$ and a Vogel sequence $\alpha\cdot f$ 
such that $e_k(x)=\mult_x V^{\alpha\cdot f}_k$ for $k\le r-1$ but
$\mult_x V^{\alpha\cdot f}_r < e_r(x)$. Since $\alpha\cdot f$ is a Vogel sequence of $\J$ for a 
generic choice of $\alpha$, we can choose $(\alpha^j)_{j\in \N}$ in $(\P^m)^n$ such that $(\alpha^j)_{j\in \N}
\to \alpha$ and such that $\alpha^j\cdot f$ is a Vogel sequence of $\J$ for each $j$, and moreover, 
by Theorem \ref{meanvaluethmlelong}, such that $\mult_x V^{\alpha^j\cdot f}=e(x)$. 
It then follows that, for $k\leq r$,  
\begin{equation}\label{puta}
\ell_x({\bf 1}_Z[\alpha_k\cdot f]\w\cdots\w[\alpha_1\cdot f])
 \le e_k(x) = \ell_x({\bf 1}_Z[\alpha^j_k\cdot f]\w\cdots\w[\alpha^j_1\cdot f]).
\end{equation}

We claim that
\begin{equation}\label{puta2}
\lim_{j\rightarrow \infty} [\alpha^j_k\cdot f]\w\cdots\w[\alpha^j_1\cdot f] = 
[\alpha_k\cdot f]\w\cdots\w[\alpha_1\cdot f]
\end{equation}
for $k\le r$. For instance by \cite[Chapter~2, Corollary 12.3.4]{Ch}, \eqref{puta2} holds for $k=1$. Assume now that it holds for
$k<r$. Then by \eqref{puta} and Proposition~\ref{prop6-2}, 
\begin{equation}\label{puta2bis}
\lim_{j\rightarrow \infty} ({\bf 1}_{X\setminus Z}[\alpha^j_k\cdot f]\w\cdots\w[\alpha^j_1\cdot f])= 
{\bf 1}_{X\setminus Z}[\alpha_k\cdot f]\w\cdots\w[\alpha_1\cdot f]. 
\end{equation} 
Since $\alpha^j\cdot f$ and $\alpha\cdot f$ are Vogel sequences, the currents in \eqref{puta2bis} intersect properly with 
$[\alpha^j_{k+1}\cdot f]$ and $[\alpha_{k+1}\cdot f]$, respectively. In light of  \cite[Chapter~2, Corollary 12.3.4]{Ch} or 
\cite[Theorem~3.6]{T},  \eqref{puta2} holds for $k+1$, and the claim follows by induction.

Proposition~\ref{prop6-2} and \eqref{puta} imply that 
\begin{equation}\label{bicicletta}
\lim_{j\rightarrow \infty} 
({\bf 1}_{Z}[\alpha^j_r\cdot f]\w\cdots\w[\alpha^j_1\cdot f]) = 
{\bf 1}_{Z}[\alpha_r\cdot f]\w\cdots\w[\alpha_1\cdot f].
\end{equation}
By semicontinuity, \eqref{limsup}, the Lelong number of the right hand side of \eqref{bicicletta}
is greater than or equal to the Lelong number of ${\bf 1}_{Z}[\alpha^j_r\cdot f]\w\cdots\w[\alpha^j_1\cdot f]$.
Thus, $\mult_x V^{\alpha\cdot f}_r\geq e_r(x)$, which gives a contradiction. Hence $\min_{\lex}\mult_x V^{\alpha\cdot f}= e(x)$. 
\end{proof}

Given a positive closed current $v$, 
we define $\ell_x(v):=(\ell_{x0},...,\ell_{xn})$, where $\ell_{xk}$ denotes the Lelong number 
at $x$ of the component of $v$ of bidegree $(k,k)$. 
If $v$ and $w$ are positive and closed, 
we let
$
v\le_{x} w
$
mean that $\ell_x (v)\le_{\lex} \ell_x (w)$, 
and $v=_xw$ means that  $\ell_x (v)=\ell_x (w)$. 
Observe that if $h$ is a Vogel sequence of an ideal $\J_x$, then the zero sets of $h$ and $\J_x$ coincide. 
If $f_1,\ldots, f_n$ is Vogel sequence of an ideal $\J_x$, then in
view of Theorems \ref{genking} and \ref{purra} and Proposition \ref{vogelex}, $M^f\le_{x} M^{f_n}\w\ldots \w
M^{f_1}$. In fact we have: 
\begin{prop}\label{theoremorder} 
Let $f_1,\ldots, f_\m$ be a sequence of elements in $\mathcal O_{X,x}$
and let $f=(f_1,\ldots,f_s)$. 
Then
\begin{equation}\label{puta3}
M^f\le_{x} M^{f_\m}\w\ldots \w M^{f_1}. 
\end{equation}
\end{prop}

\begin{proof}
Let $Z:=\{f=0\}$.  
In order to prove \eqref{puta3}, we proceed by induction on the number $\m$ of functions. Clearly \eqref{puta3} 
holds for $\m=1$, so assume that it holds for $\m-1$ instead of $\m$. Let $\widetilde f:=(f_2,\ldots, f_{\m})$. 
By \eqref{uppdelning} we may assume that $X$ is irreducible and that
$f_1$ does not vanish identically on $X$, so that $M^{f_1}=M^{f_1}_1=[f_1]$; 
otherwise $M^{f_1}=M_0^{f_1}={\bf 1}_X$ and $M^f=M^{\widetilde f}$ 
and we are back in the case $\m-1$. 

Let $[W]:=[f_1]$, and let $i_{W_j}:W_j\hookrightarrow X$ be the irreducible components of $W^{X\setminus Z}$. 
Theorem \ref{meanvaluethmlelong} asserts that for a generic choice of $\alpha\in (\P^{\m-2})^{n-1}$, $\alpha\cdot \widetilde f$ 
is a Vogel sequence\footnote{If $s=2$, then $\tilde{f}=f_2$ and $\mathbb{P}^0$ should 
be interpreted as $\{1\}$.} of $\J(i_{W_j}^*\widetilde f)$ and $
M^{\alpha_{n-1}\cdot \widetilde f}\w\cdots\w
M^{\alpha_1\cdot \widetilde f}
=_x M^{\widetilde f}
$
on each $W_j$, so that 
$
M^{\alpha_{n-1}\cdot \widetilde f}\w\cdots\w M^{\alpha_1\cdot \widetilde f}\w 
[W^{X\setminus Z}]
=_x M^{\widetilde f}\w [W^{X\setminus Z}], 
$
cf.\ the discussion after Lemma~\ref{polkagris}.
By the induction hypothesis 
$$
M^{\widetilde f}\w [W^{X\setminus Z}]\le_x M^{f_\m}\w\cdots\w M^{f_2}\w [W^{X\setminus Z}]. 
$$
In view of \eqref{jobb} and \eqref{jobb2}, since $[f_1]=[W^Z] + [W^{X\setminus Z}]$ and $\tilde{f}$ vanishes on $Z$,
we get 
\begin{equation}\label{hummer}
M^{\alpha_{n-1}\cdot \widetilde f}\w\cdots\w 
M^{\alpha_1\cdot \widetilde f}\w M^{f_1} 
\le_x M^{f_\m}\w\cdots\w M^{f_2}\w M^{f_1}.
\end{equation}
For a generic choice of $\alpha$, the sequence 
$f_1,\alpha_1\cdot \widetilde f, \ldots, \alpha_{n-1}\cdot\widetilde f$ is a Vogel sequence of $\J(f)$. 
Thus, by Theorem \ref{purra}, 
\begin{equation}\label{hummer2}
M^f\le_x M^{\alpha_{n-1}\cdot \widetilde f}\w\cdots\w 
M^{\alpha_1\cdot \widetilde f}
\w M^{f_1}.
\end{equation}
Combining \eqref{hummer} and \eqref{hummer2}, we get \eqref{puta3}. 

\end{proof}

\section{An invariance property}\label{invprop}


We have the following invariance property of Segre numbers. In the
setting when $\J$ is the pullback to $X$ of the radical sheaf of a
smooth manifold in some ambient space this was proved in \cite[Section~5]{R}.

\begin{prop}\label{laban}
Let $\J$ be an ideal sheaf on an analytic space $X$, let $\J'$ be the pullback of $\J$ to $X\times\C_w$ under the projection $(x,w)\mapsto x$, and
let $i\colon X\hookrightarrow X\times\C_w$ be the embedding $x\mapsto (x,0)$. Then
\begin{equation}\label{apa1}
e_k(\J',X\times\{0\},i(x))=e_k(\J,X,x)
\end{equation}
and
\begin{equation}\label{apa2}
e_{k+1}(\J'+(w),X\times\C_w,i(x))=e_k(\J,X,x).
\end{equation}
\end{prop}

For the proof we will use the following invariance of Bochner-Martinelli
currents. 

\begin{lma} \label{hoppsan}
Let $f$ be a tuple of holomorphic functions on $X$ and 
let $i:X\hookrightarrow X\times \C_w$   be the  embedding 
$x\mapsto (x,0)$.  
Then $M^{(f,w)}_0=0$ and
\begin{equation}\label{lma2}
M^{(f,w)}_{k+1}=i_* M^f_k , \quad k\ge 0.
\end{equation}
Moreover, if $W\subset X$ is an analytic variety, 
\begin{equation}\label{nytt}
M_k^{f\otimes 1}\w[W\times\{0\}]=i_*(M_k^f\w[W]).
\end{equation}
\end{lma}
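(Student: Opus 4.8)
The first assertion is immediate: by the very definition of $M_0$, the current $M^{(f,w)}_0$ is the integration current over those irreducible components of $X\times\C_w$ on which the tuple $(f,w)$ vanishes identically, and there are none, since $w$ vanishes identically on no component.

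For \eqref{nytt} I would simply feed the restriction formula \eqref{jobb} into itself. Applying \eqref{jobb} (with $r=1$) to the tuple $f\otimes 1$ on $X\times\C_w$ and the subvariety $W\times\{0\}$, and identifying $W\times\{0\}$ with $W$ (under which $f\otimes 1$ restricts to $i_W^*f$, where $i_W\colon W\hookrightarrow X$), gives $M^{f\otimes 1}_k\w[W\times\{0\}]=(i\circ i_W)_*M^{i_W^*f}_k$; applying \eqref{jobb} once more, now on $X$ to the tuple $f$ and the subvariety $W$, gives $M^f_k\w[W]=(i_W)_*M^{i_W^*f}_k$, and pushing forward by $i$ turns the first identity into \eqref{nytt}.

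The content of the lemma is \eqref{lma2}. By \eqref{uppdelning} I may assume $X$ irreducible. If $f\equiv 0$ then $|(f,w)|=|w|$, so $M^{(f,w)}$ is the Bochner--Martinelli current of the single coordinate $w$, and both sides of \eqref{lma2} are seen by inspection to equal $\1_X$ pushed to $X\times\{0\}$ when $k=0$ and to vanish when $k\ge 1$; so I assume $Z:=\{f=0\}$ is a proper subvariety. When $k=0$ the right-hand side is $i_*\1_Z=0$, while the left-hand side $M^{(f,w)}_1$ is a positive closed $(1,1)$-current supported on $Z\times\{0\}$, a set of codimension $\ge 2$, hence vanishes by Lemma~\ref{polkagris}. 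So suppose $k\ge 1$. Now I would pass to a resolution. Choose a log-resolution $\pi\colon\widetilde X\to X$ of $\J(f)$, with $\pi^*f=f^0f'$, $f^0$ locally a monomial, $f'$ nowhere vanishing, and $D_f,\omega_f$ as in Section~\ref{bmcurrents}. Since $f'$ is nowhere vanishing, the tuple $(\pi\times\mathrm{id})^*(f,w)=(f^0f',w)$ generates on $\widetilde X\times\C_w$ the ideal sheaf $\I(D_f)\cdot\O_{\widetilde X\times\C_w}+(w)$; let $q\colon\widehat X\to\widetilde X\times\C_w$ be the normalization of the blow-up along this sheaf, and put $\rho:=(\pi\times\mathrm{id})\circ q$. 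Then $\J(f,w)\cdot\O_{\widehat X}$ is principal, and \eqref{data2} gives
\[
M^{(f,w)}_{k+1}=\rho_*\big([D_{(f,w)}]\w\omega_{(f,w)}^{k}\big),
\]
while $i_*M^f_k=(i\circ\pi)_*\big([D_f]\w\omega_f^{k-1}\big)$ by \eqref{data2} on $X$. Since $i\circ\pi=(\pi\times\mathrm{id})\circ j$ with $j\colon\widetilde X\hookrightarrow\widetilde X\times\C_w$, $x\mapsto(x,0)$, and $\rho=(\pi\times\mathrm{id})\circ q$, it is enough to prove the local identity
\[
q_*\big([D_{(f,w)}]\w\omega_{(f,w)}^{k}\big)=j_*\big([D_f]\w\omega_f^{k-1}\big)
\]
of currents on $\widetilde X\times\C_w$, in a neighbourhood of $|D_f|\times\{0\}$ (off this set both sides vanish, since there the ideal $\I(D_f)\cdot\O+(w)$ is already the unit ideal).

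This last identity reduces to a concrete computation on the charts of the normalized blow-up of a monomial ideal $(x^\alpha,w)$ in $\C^n_x\times\C_w$, and this bookkeeping is the one genuinely computational and, I expect, most delicate point. On the chart $w=\tau x^\alpha$ one has $\rho^*(f,w)=x^\alpha\cdot(f',\tau)$, so that $\omega_{(f,w)}=dd^c\log(|f'|^2+|\tau|^2)$ and $D_{(f,w)}$ is the divisor of $x^\alpha$, with $\tau$ an affine coordinate on the exceptional $\P^1$; on the remaining charts $\rho^*(f,w)$ factors through $w$. The point is then to integrate $\omega_{(f,w)}$ over the exceptional fibres --- a Crofton/Fubini computation exactly of the type appearing in Lemma~\ref{snurr} and Lemma~\ref{pilligt} --- which transforms $[D_{(f,w)}]\w\omega_{(f,w)}^{k}$ into $[D_f]\w\omega_f^{k-1}$ situated on $\widetilde X\times\{0\}$, i.e.\ into the right-hand side. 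Making this fibre integration precise on the (generally singular) blow-up of $(x^\alpha,w)$ is the main obstacle; the rest of the argument is formal.
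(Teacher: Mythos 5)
Your treatment of $M^{(f,w)}_0=0$, of \eqref{nytt} via a double application of \eqref{jobb}, and of the $k=0$ and $f\equiv 0$ cases of \eqref{lma2} is correct and matches the paper. The reduction of \eqref{lma2} for $k\ge 1$ to the identity $q_*\big([D_{(f,w)}]\w\omega_{(f,w)}^{k}\big)=j_*\big([D_f]\w\omega_f^{k-1}\big)$ on $\widetilde X\times\C_w$ is also sound. But that identity \emph{is} the lemma, and you do not prove it: you describe the chart $w=\tau x^\alpha$ of the blow-up of $(x^\alpha,w)$, assert that a ``Crofton/Fubini computation'' over the exceptional fibres will do the job, and then explicitly flag making this precise on the (singular, to-be-normalized) blow-up as ``the main obstacle.'' As written, the central step of the proof is missing. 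The difficulty is real: the current $[D_{(f,w)}]\w\omega_{(f,w)}^{k}$ lives on the normalization of a generally singular blow-up, it could a priori carry mass over the points $[0:1]$ of the exceptional $\P^1$'s not covered by your chart, and pushing forward a wedge of a divisor with powers of a singular-metric Chern form along the non-finite map $q$ is exactly the kind of computation that needs to be carried out, not announced.

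The paper closes this gap by a different and more elementary device that avoids the blow-up of $(x^\alpha,w)$ altogether. After reducing (as you do) to $X$ smooth and $f^0$ a monomial, it observes that $M^{(f,w)}_{k+1}$ has order zero and support in $\{w=0\}$, while $M^{(f,w),\lambda}_{k+1}$ is integrable in the $w$-direction for $0<\Re\lambda<1$; hence it suffices to test against forms $\xi(z,w)=\tilde\xi(z)$ independent of $w$. One then performs the generically one-to-one substitution $w=f^0\omega$ on $\C^n_z\times\C_w$ (a change of variables in an absolutely convergent integral, not a modification), which factors $|f|^2+|w|^2=|f^0|^2(|f'|^2+|\omega|^2)$; at $\lambda=0$ the expression becomes $[f^0]\w\tilde\xi\w\int_\omega(dd^c\log(|f'|^2+|\omega|^2))^k$, and the fibre integral is computed to be $(dd^c\log|f'|^2)^{k-1}$. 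This is the precise Fubini computation you were hoping for, but set up so that it takes place on $\C^n\times\C_\omega$ rather than on a normalized blow-up. If you want to complete your version, you would either have to carry out the chart-by-chart analysis on $\widehat X$ (including the missing chart and the normalization), or switch to the paper's substitution.
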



If we consider $X$ as embedded in some larger analytic space $X'$ and
$i\colon X'\to X'\times\C_w$, $x\mapsto (x,0)$, then \eqref{lma2} reads
\begin{equation*}
M_{k+1}^{(f,w)}\w[X\times \C_w]=i_*(M^f_k\w [X]).
\end{equation*}
In particular, if $f=0$,
\begin{equation}\label{mednoll}
M_1^w\w[X\times \C_w]=i_*[X]=[X\times\{0\}].
\end{equation}

\begin{proof}
Let $z$ be local coordinates on $X$. Since $(z,w)\mapsto (f(z),w)$ does not vanish identically on $X\times \C_w$, 
it follows that $M^{(f,w)}_0=0$. 

Let us now prove \eqref{lma2}. First consider the case when $k=0$. By \eqref{uppdelning} we may assume that $X$ 
is irreducible. Then either $f\equiv 0$ on $X$ or the zero set of $f$ has at least codimension $1$ in $X$. In the first case 
$$
M^{(f,w)}_1=M^{w}_1=[w]=i_*1=i_*M^0_0=i_*M^f_0.
$$
In the latter case the zero set of $(f,w)$ has at least codimension $2$ on $X\times\C_w$, and and so both sides 
of \eqref{lma2} vanish by Lemma \ref{polkagris}.
Thus \eqref{lma2} holds for $k=0$.

Next let $\pi\colon \widetilde X\to X$ be a smooth modification such that $\J\cdot \mathcal O_{\widetilde X}$ 
is principal and moreover $f^0$ is locally a monomial; use the notation from Section \ref{bmcurrents}.
Observe that then 
$\pi\otimes {\rm id}_w\colon\widetilde X\times\C_w\to X\times\C_w$
is a smooth modification with the same properties.  
It follows that it is enough to prove \eqref{lma2} in case $X$ is smooth, 
$\J=(f^0)$ is principal and $f^0$ is (in local coordinates) a monomial.

In light of Section \ref{bmcurrents} we thus have to show that 
\begin{equation}\label{aptit}
(2\pi i)^{-1}\dbar(|f|^2+|w|^2)^\lambda\w \partial\log(|f|^2+|w|^2)\w
(dd^c\log(|f|^2+|w|^2))^k
\end{equation}
is equal to 
$[f^0]\w(dd^c\log|f'|)^{k-1}\w [w]=i_*M_k^f$ 
when $\lambda=0$. 
Indeed, at $\lambda=0$, \eqref{aptit} is equal to $M_{k+1}^{(f,w)}$. 
 Note that \eqref{aptit} is locally integrable for $\Re\lambda>0$.   
Moreover, if $\Re\lambda <1$, it is integrable in the $w$-direction and thus acts on forms that are 
just bounded in the $w$-direction. Since $M_{k+1}^{(f,w)}$ is of order zero and $\supp M_{k+1}^{(f,w)}\subset \{w=0\}$, 
it follows that to check the action of $M_{k+1}^{(f,w)}$ on test forms, it is enough to consider 
forms $\xi(z,w)=\tilde\xi(z)$, where $\tilde\xi(z)$ is any test form in $X$. 
However, after the (generically $1-1$) change
of variables  $f^0\omega=w$, 
so that $|f|^2+|w|^2=|f^0|^2(|f'|^2+|\omega|^2)$,  the action of \eqref{aptit} on $\xi$ is equal to 
$$
(2\pi i)^{-1}\int_{z,\omega}\dbar|f^0|^{2\lambda}(|f'|^2+|\omega|^2)^\lambda
\w\partial\log|f^0|^2\w
(dd^c\log(|f'|^2+|\omega|^2))^k\w \tilde\xi(z). 
$$
Taking $\lambda=0$, we get
\begin{equation}\label{arstid}
\int_z[f^0]\w\tilde\xi(z)\w\int_\omega(dd^c\log(|f'|^2+|\omega|^2))^k. 
\end{equation}
One can check that the inner integral in \eqref{arstid} is equal to
$(dd^c\log|f'|^2)^{k-1}$, which proves \eqref{lma2}.
Finally we prove \eqref{nytt}. 
Let $j:W\hookrightarrow X$. Then, using \eqref{jobb},
\[
M^{f\otimes 1}\w[W\times \{0\}]=i_*j_*M^{j^*i^* f\otimes 1}=
i_*j_*M^{j^*f}=i_*M^f\w[W].
\]
\end{proof}


\begin{proof}[Proof of Proposition \ref{laban}]
Since the pullback of $\J'$ 
to $X\simeq X\times\{0\}$ is just $\J$, \eqref{apa1} should
be clear. More formally:
Let $f$ be a tuple that defines the ideal sheaf $\J$ in $X$. 
Then $f\otimes 1$ defines $\J'$ in $X\times\C_w$ and
\begin{equation*}
e_k(\J',X\times\{0\},x)=
\ell_x(M^{f\otimes 1}_k\w[X\times\{0\}])=
\ell_x (M^f_k\w[X]) = e_k(\J,X,x),
\end{equation*}
where we have used \eqref{nytt} for the second equality. 
This  proves \eqref{apa1}.

To see \eqref{apa2} notice that $f,w$ defines $\J+(w)$ in $X\times\C_w$. 
Thus, using \eqref{jobb} and \eqref{lma2} we have 
\begin{equation*}
e_{k+1}(\J'+(w),X\times\C_w,(x,0))=\ell_x(M^{(f,w)}_{k+1}\w[X\times\C_w])=
\ell_x(M^f_k\w[X])=e_k(\J,X,x).
\end{equation*}
\end{proof}

\section{Local intersection numbers}\label{oxet}

Tworzewski's original motivation for introducing the extended index of
intersection was to understand intersection theory in the nonproper
case. 
Let $Z_1,\ldots,Z_r$  be subvarieties of a smooth manifold $Y$ that do not 
necessarily intersect properly.  A standard procedure to define an
intersection product $Z_1\cdots Z_r$ 
is to give some reasonable meaning to the intersection 
\begin{equation}\label{pop}
\Delta\cdot  Z_1\times\cdots\times Z_r,
\end{equation} where $i\colon Y\simeq \Delta\to Y\times\cdots \times Y$
is the diagonal in
$Y\times\cdots \times Y$. In this way one is reduced to the case of two varieties one of which is smooth. 

Now assume that $A, Z$ are subvarieties of $Y$, that $A$
is smooth, and (initially) that $Z$ has pure dimension. Let $\J_A$ denote the radical sheaf of $A$, and also, for
simplicity, the pullback of $\J_A$ to $Z$. 
Following Tworzewski 
we define {\it local intersection
  numbers} 
\begin{equation}\label{twortwor}
g_\ell(A, Z, x):=e_{\dim Z-\ell}(\J_A,Z,x)
\end{equation}
at $x$. If $Z=\sum_j\alpha_j Z_j$, where the $Z_j$ are pure dimensional, we set
$g_\ell(A, Z, x):=\sum_j\alpha_j g_\ell(A, Z_j, x)$. 
The change of
indices is made so that $\ell$ corresponds to the generic multiplicity
of components of {\it dimension} $\ell$ of Vogel cycles. 
We will use the notation $A\circ Z$ for these lists of
local intersection numbers, i.e., 
\begin{equation}\label{two}
A\circ Z (x) = (g_{\dim Z}(A, Z, x), \ldots, g_1(A, Z, x), g_0(A, Z, x)) 
\end{equation}

The local \emph{multiplicities of intersection} of general varieties $Z_1,\ldots,
Z_r$ are then 
\begin{equation}\label{lokalsnitt}
\epsilon_\ell(Z_1,\ldots,
Z_r;x)=g_\ell(\Delta,Z_1\times\cdots\times Z_r,(x,\ldots,x)),  
\end{equation}
cf.\ \cite[Section~6]{T}. 
We will write 
\begin{equation}\label{three}
Z_1\diamond \cdots \diamond Z_r(x)=(\epsilon_\nu(Z_1,\ldots, Z_r;
x),\ldots, 
\epsilon_1(Z_1,\ldots,  Z_r; x), \epsilon_0(Z_1,\ldots,  Z_r; x) ),  
\end{equation} 
where $\nu$ is the dimension of the set-theoretical intersection
$Z_1\cap\cdots\cap Z_r$. 

Note that the product $Z_1\diamond \cdots \diamond Z_r$ by definition is
commutative. It is also independent of the manifold $Y$ in the
following sense: If $\iota: Y\to \widetilde Y$ is an embedding of 
 $Y$ in a larger manifold $\widetilde Y$, then 
\begin{equation}\label{brooklyn} 
Z_1\diamond\cdots\diamond Z_r(x)= \iota(Z_1)\diamond\cdots\diamond
\iota(Z_r)\big (\iota(x)\big). 
\end{equation} 
This follows from Proposition ~\ref{laban}, see also
\cite[Section~5]{R}. 

The next result, which relates the two local intersections \eqref{two}
and \eqref{three}, we have not found in the literature. 


\begin{prop}\label{snuttefilt}
Assume that $A,Z$ are subvarities of a manifold $Y$, and that $A$ is
smooth. Then 
\[
A\diamond Z = A \circ Z. 
\]
\end{prop}

In particular, if $A$ and $B$ are smooth submanifolds of $Y$, since
$A\diamond B$ is commutative, it follows that $A\circ B=B\circ A$. 

\begin{proof}
Fix $x$ in $Y$. We may assume, without loss of generality,  that $Y=\C^N$. 
Choose local coordinates $z=(z',z'')$ on $\C^N$ so that $A=\{z'=0\}$, and 
local coordinates $(z,w)$ on $\C^N\times \C^N$. 
We will  show that 
\begin{equation}\label{forsta}
\epsilon_j(A,Z;x)=e_{\dim A+ \dim Z - j}(\J_\Delta,Z\times
A,x)=\ell_x(M^{z-w}_{\dim A+ \dim Z- j}\w [Z\times A]), 
\end{equation}
cf.\ \eqref{jobb}, 
coincides with 
\begin{equation}\label{andra}
g_j(A,Z,x)=e_{\dim Z - j}(\J_A,Z,x)=\ell_x(M^{z'}_{\dim Z- j}\w [Z]).
\end{equation}

Note that 
$M^{z-w}_k\w[Z\times A]=M^{(z',z''-w'')}_k\w[Z\times \{w'=0\}]$. 
Let $(z',z'', w', \eta'')$, where $\eta''=z''-w''$, be new coordinates on $\C^N\times\C^N$. Then  \eqref{lma2}  
implies that $M_{k+\dim A}^{(z',\eta'')}\w[Z\times \{w'=0\}]=i_*M^{z'}_{k}\w[Z\times \{w'=0\}]$, 
where $i:\C^{2N-\dim A}_{z',z'',w'}\hookrightarrow \C^{2N}_{z',z'',w',\eta''}$. 
Moreover, by \eqref{nytt}, $M_k^{z'}\w[Z\times\{w'=0\}]=j_*M^{z'}_k\w[Z]$, where 
$j:\C^N_{z',z''}\hookrightarrow \C^{2N-\dim A}_{z',z'',w'}$. 
Hence 
$$
M^{z-w}_{\dim A+\dim Z-j}\w[Z\times A]=i_*j_* M^{z'}_{\dim Z-j}\w[Z]
$$ 
and thus \eqref{forsta} is equal to \eqref{andra}.
\end{proof}

\begin{ex}\label{modem}
If $Z_j$ intersect properly, then
also \eqref{pop} is a proper intersection and it is well-known that 
$Z_1\cdots Z_r$ coincides with the intersection  \eqref{pop} (after identifying $Y\simeq \Delta$).
It follows that 
$$
\epsilon_\ell(Z_1,\ldots, Z_r;x)=\mult_x (Z_1\cdots Z_r)
$$
for $x\in |Z_1\cdots Z_r|$ and $\ell=\dim(Z_1\cdots Z_r)$,
and $0$ otherwise, 
cf. \cite[Theorem~6.5]{T}. 
In particular, if $Z$ is a subvariety of the smooth manifold
$A$, in view of \eqref{brooklyn}, 
\begin{equation}\label{likhet}
A\diamond Z (x)= (\mult_x Z,0,\ldots,0).  
\end{equation} 
\end{ex}

In the nonproper case the classical intersection product
$Z_1\cdots Z_r$ 
in the sense of Fulton, \cite{fult}, 
cannot represent the local intersection multiplicities in any reasonable
sense. Indeed,  in general $Z_1\cdots Z_r$ is a cycle of codimension $\codim Z_1+\cdots + \codim Z_r$,
determined modulo rational equivalence on $Z_1\cap\cdots\cap Z_r$. 



\smallskip

Tworzewski, \cite[Sections~5,6]{T}, proves that there is unique cycle
$Z_1\bullet\cdots \bullet Z_r$, that he calls the \emph{intersection
  product}, such that 
$$
\sum_k\mult_x (Z_1\bullet\cdots\bullet Z_r)_{k}=\sum_\ell
\epsilon_\ell(Z_1,\ldots,Z_r;x) 
$$
for each point $x$, where the index $k$ denotes the component of dimension
$k$. 
By definition this product  
respects the (sum of the) local intersection multiplicities, but
it does  not respect Bezout's formula in general. For instance,
the self-intersection in $\P^2$ of any smooth curve $C$ is just $C$ 
itself, and thus $\deg (C\bullet C)\neq (\deg C)^2$ unless $C$ is a
line.

\smallskip
In a forthcoming paper we will introduce,  in the case $Y=\P^n$,  a global current that represents, at each point, the local intersection multiplicities, and 
respects Bezout's formula, in a reasonable sense. It is obtained as the mean
value of various Vogel sequences, based on global  variants of 
the ideas in Section~\ref{storskurk} above.

\section{Examples}\label{exsection}


Let us start by some computations of Bochner-Martinelli currents and Segre
numbers. Our first example illustrates that the currents $M^f$ in
general depend on the set of generators $f$ although the Lelong numbers only
depend on (the integral closure of) the ideal generated by $f$, cf.\ Remark ~\ref{pia}. 


\begin{ex} Let us consider the primary ideal $(x)$ in $\C^2_{x,y}$. We
  know from Corollary~\ref{smultron} and Remark ~\ref{pia}, respectively, that 
$M^x_1=[x]$ and that $M^x_2=0$.  Let us now consider the pair $(x,xy)$ of generators for the same ideal.
We first consider the Vogel cycles obtained from generic linear combinations of these generators.
Since $Z=\{x=0\}$ and 
$
[\alpha_0 x+\alpha_1 xy]=[x(\alpha_0+\alpha_1y)]=[x]+[\alpha_0+\alpha_1y]
$
we have that 
$$
\1_Z [\alpha_0 x+\alpha_1 xy]=[x]
$$
as expected, since $[x]$ must be a fixed component in any Vogel cycle. A  simple computation yields that
$$
\1_Z [\beta_0 x+\beta_1 xy]\w[\alpha_0 x+\alpha_1 xy]=[x]\w [\alpha_0+\alpha_1 y]
$$
for generic choices of $\alpha$ and $\beta$, cf.\ Section ~\ref{lelongcartier}.  Thus the component of the Vogel cycle of codimension
$2$ is non-vanishing for generic $\alpha,\beta$. Taking mean values over $\P^1$ we get, cf., Theorem~\ref{meanvaluethm},
$$
M^{x,xy}_2=[x]\w\frac{dy\w d\bar y}{\pi(1+|y|^2)^2}.
$$
Here we have used that with the generic parametrization $\C\ni t\mapsto [-t,1]\in\P^1$, we  have 
$$
\int_{[\alpha]\in\P^1}[\alpha_0+\alpha_1 y]d\sigma(\alpha)=\int_{t\in\C}[y-t]\w\frac{dt\w d\bar t}{\pi(1+|t|^2)^2}=
\frac{dy\w d\bar y}{\pi(1+|y|^2)^2}.
$$
\end{ex}


Next we will discuss a simple example where a moving component
occurs. 

\begin{ex}\label{hak1}
Consider the tuple $f=t_3(t_1,t_2,t_3)=t_3 t$ in $X=\C^3_t$, with zero set $Z=\{t_3=0\}$. 
We will compute the Segre numbers $e_k(0)=e_k(\J(f),\C^3,0)$, $k=0,1,2,3$.
Let $\alpha\cdot f$ be a Vogel sequence of $\J(f)$ at $0$ of the 
form $\alpha_1\cdot f, \ldots, \alpha_3\cdot f$. 
Let us compute the corresponding Vogel cycle $V^{\alpha\cdot f}$. 
First note that $X^{X\setminus Z}_0=X_0=X$. 
Thus, by Proposition \ref{produktprop},  
$$[X_1]=M_1^{t_3(\alpha_1\cdot t)}=[t_3]+[\alpha_1\cdot t]=[X_1^Z]+[X_1^{X\setminus Z}].$$ 
Furthermore, using \eqref{tundra} and \eqref{rakneregel}, we get
$$
[X_2]=M_1^{t_3(\alpha_2\cdot t)}\w M_1^{t_3(\alpha_1\cdot t)}=
[t_3]\w[\alpha_1\cdot t]+[\alpha_2\cdot t]\w[\alpha_1\cdot t]=[X_2^Z]+[X_2^{X\setminus Z}]
$$
and
$$
[X_3]=M_1^{t_3(\alpha_3\cdot t)}\w M_1^{t_3(\alpha_2\cdot t)}\w M_1^{t(\alpha_1\cdot t)}=
([t_3]+[\alpha_3\cdot t])\w [\alpha_2\cdot t]\w[\alpha_1\cdot t]=2[0] = [X_3^Z],
$$
for a generic $\alpha$. 
Hence 
\begin{equation*}
[V^h]=[V^h_1]+[V^h_2]+[V^h_3]=[t_3]+[t_3]\w[\alpha_1\cdot t]+2[0]
\end{equation*}
and, in particular, 
$$
e_0(0)=0,\  e_1(0)=1,\  e_2(0)=1,\  e_3(0)=2.
$$ 
Observe that $V^h_1$ and $V^h_3$ are fixed, whereas $V^h_2$ is moving. 
A computation, using Theorem~\ref{meanvaluethm} and Lemma~\ref{snurr}, yields
$$
M^f_0=0,\  M^f_1=[t_3],\  M^f_2=[t_3]\w dd^c\log(|t_1|^2+|t_2|^2),
\  M^f_3=2[0].
$$
\end{ex}

The following simple lemma is useful for computations.
\begin{lma}\label{listigt} Let $X$ and $X'$ be two analytic spaces
of dimension $n$, let $\tau\colon X'\to X$ be a holomorphic map, 
and let $f$ be a tuple of holomorphic functions on $X$. 
Assume that  $\tau$ is proper, surjective, and generically $r$ to $1$. Then 
\begin{equation}\label{listigt1}
r M_k^f=\tau_* M_k^{\tau^*f}.
\end{equation}

Moreover, if $\xi$ is a tuple that defines the maximal ideal at $x\in X$, then the Segre numbers at $x$
associated with $\J=\J(f)$ on $X$ are given by 
\begin{equation}\label{listigt2}
e_k(x)=\frac{1}{r}\int_{X'} M^{\tau^*\xi}_{n-k}\w M^{\tau^* f}_k.
\end{equation}
\end{lma}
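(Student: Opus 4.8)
The plan is to obtain both identities from the description of $M^g_k$ by analytic continuation (Section~\ref{bmcurrents}, Section~\ref{products}) together with the elementary fact that a proper, surjective, generically $r$-to-$1$ holomorphic map $\tau$ satisfies $\int_{X'}\tau^*\eta=r\int_X\eta$ for every integrable top-degree form $\eta$ on $X$ (integrate over the Zariski-dense open set where $\tau$ restricts to an $r$-sheeted covering; its complement has measure zero).

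First I would prove \eqref{listigt1}. Fix $k$ and take $\Re\lambda>0$, so that $M^{f,\lambda}_k$ and $M^{\tau^*f,\lambda}_k$ are $L^1_\loc$. Since $|\tau^*f|^2=\tau^*(|f|^2)$ and $\tau^*$ commutes with $\partial$, $\dbar$ and $\w$, the defining formulas of Section~\ref{bmcurrents} give $M^{\tau^*f,\lambda}_k=\tau^*M^{f,\lambda}_k$ on the dense open set $X'_{reg}\setminus\tau^{-1}(Z)$ where both sides are smooth; as both are $L^1_\loc$ on $X'$ this holds as currents. Hence, for any test form $\psi$ on $X$,
$$\big\langle\tau_*M^{\tau^*f,\lambda}_k,\psi\big\rangle=\int_{X'}M^{\tau^*f,\lambda}_k\w\tau^*\psi=\int_{X'}\tau^*\big(M^{f,\lambda}_k\w\psi\big)=r\int_X M^{f,\lambda}_k\w\psi,$$
i.e. $\tau_*M^{\tau^*f,\lambda}_k=r\,M^{f,\lambda}_k$ for $\Re\lambda>0$. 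By Section~\ref{bmcurrents} the right-hand side extends analytically across $\lambda=0$, and so does $\lambda\mapsto\tau_*M^{\tau^*f,\lambda}_k$ since $\lambda\mapsto M^{\tau^*f,\lambda}_k$ does and $\tau_*$ is continuous; two functions holomorphic in $\Re\lambda>-\epsilon$ that coincide for $\Re\lambda>0$ coincide throughout, and at $\lambda=0$ we get $r\,M^f_k=\tau_*M^{\tau^*f}_k$.

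For \eqref{listigt2}, recall from Section~\ref{sectionsegre} that $e_k(x)=\ell_x(M^f_k)$, and that Proposition~\ref{heldag}, applied to the single tuple $f$ and the tuple $\xi$ generating the maximal ideal at $x$, gives $M^\xi_{n-k}\w M^f_k=e_k(x)[x]$. I would next establish the ``product'' version of \eqref{listigt1},
$$\tau_*\big(M^{\tau^*\xi}_{n-k}\w M^{\tau^*f}_k\big)=r\,M^\xi_{n-k}\w M^f_k .$$
One way is to run the previous argument on the regularized product of Section~\ref{products}: for $\Re\lambda$ large $M^{\tau^*\xi,\lambda}_{n-k}=\tau^*M^{\xi,\lambda}_{n-k}$, so $M^{\tau^*\xi,\lambda}_{n-k}\w M^{\tau^*f}_k=\tau^*M^{\xi,\lambda}_{n-k}\w M^{\tau^*f}_k$, whose push-forward under $\tau$ is $M^{\xi,\lambda}_{n-k}\w\tau_*M^{\tau^*f}_k=r\,M^{\xi,\lambda}_{n-k}\w M^f_k$ by the projection formula and \eqref{listigt1}; analytic continuation to $\lambda=0$ then yields the displayed identity. (Alternatively one lifts $\tau$ to a common normal modification resolving both $\J(f)$ and the maximal ideal at $x$ and uses \eqref{uppeform}.) By Proposition~\ref{heldag} the right-hand side equals $r\,e_k(x)[x]$, and since $\tau$ is proper,
$$\int_{X'}M^{\tau^*\xi}_{n-k}\w M^{\tau^*f}_k=\int_X\tau_*\big(M^{\tau^*\xi}_{n-k}\w M^{\tau^*f}_k\big)=r\,e_k(x)\int_X[x]=r\,e_k(x),$$
which is \eqref{listigt2}.

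The delicate point, and the step I would be most careful about, is the interplay between pullback, push-forward and the $\lambda$-regularization when the currents involved are merely $L^1_\loc$ (and not closed) — in particular making precise that $\tau^*M^{\bullet,\lambda}_\bullet=M^{\tau^*\bullet,\lambda}_\bullet$ and that the projection formula may legitimately be used in the product identity; taking $\Re\lambda$ large enough that the relevant forms are as regular as needed, or passing to a common resolution, removes these difficulties. The degree identity $\int_{X'}\tau^*\eta=r\int_X\eta$ is exactly where all three hypotheses on $\tau$ are used; the rest is routine bookkeeping with analytic continuation and the continuity of the proper push-forward $\tau_*$.
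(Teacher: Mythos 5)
Your proof is correct. The first half is exactly the paper's argument: identify $M_k^{\tau^*f,\lambda}$ with $\tau^*M_k^{f,\lambda}$ for $\Re\lambda\gg0$, use that a proper, surjective, generically $r$-to-$1$ map multiplies integrals by $r$, and analytically continue to $\lambda=0$. For the second half you take a genuinely different, though equally legitimate, route. The paper invokes Proposition~\ref{prop3-4} to write $M^\xi_{n-k}\w M^f_k$ (and its pullback) as the value at $\lambda=0$ of a \emph{single} one-parameter family $M^{\xi,\lambda}_{n-k}\w M^{f,\lambda^2}_k$, which for $\Re\lambda\gg0$ is an honest integrable form; the change-of-variables identity then applies verbatim and \eqref{listigt2} drops out in one line after continuation. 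You instead keep the iterated regularization of Section~\ref{products}, prove the intermediate push-forward identity $\tau_*(M^{\tau^*\xi}_{n-k}\w M^{\tau^*f}_k)=r\,M^\xi_{n-k}\w M^f_k$ via the projection formula applied to the outer factor $M^{\xi,\lambda}_{n-k}$ together with \eqref{listigt1} for the inner current, and then conclude with Proposition~\ref{heldag}. The cost of your route is precisely the point you flag: one must check that the projection formula is licit when the outer factor is only a $C^j$ form (which it is for $\Re\lambda$ large, since $M^{\tau^*f}_k$ has order zero) and that both sides remain holomorphic down to $\lambda=0$; the benefit is that you obtain the push-forward formula for the product itself, which is a slightly stronger statement than \eqref{listigt2} and does not require the different-powers-of-$\lambda$ device. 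Either argument is acceptable.
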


\begin{proof}
Since $\tau^* M_k^{f,\lambda}=M_k^{\tau^*f,\lambda}$  if $\Re\lambda\gg0$, we have that then 
$$
\int_X M_k^{f,\lambda}\w \psi =\frac{1}{r}\int_{X'} M_k^{\tau^*f,\lambda}\w\tau^*\psi
$$
for test forms $\psi$. Taking analytic continuations to $\lambda=0$, we get \eqref{listigt1}.
In view of Proposition~\ref{prop3-4} we have  
\begin{multline*}
e_k(x)=\ell_x M^f_k=\int_X M^{\xi,\lambda}_{n-k}\w M_k^{f,\lambda^2}\big|_{\lambda=0}=\\
\frac{1}{r}
\int_{X'} M^{\tau^*\xi,\lambda}_{n-k}\w M_k^{\tau^*f,\lambda^2}\big|_{\lambda=0}=
\frac{1}{r}\int_{X'} M^{\tau^*\xi}_{n-k}\w M^{\tau^* f}_k.
\end{multline*}
\end{proof}

In particular, it follows from Lemma~\ref{listigt} that 
\begin{equation}\label{lelongtal} 
\mult_x X=\int_X M^\xi_n=\frac{1}{r}\int_{X'} M_n^{\tau^*\xi}.
\end{equation}


\begin{ex}\label{cuspen}
Let $r,s$ be relatively prime integers and consider the cusp $X=\{ z_1^r-z_2^s=0\}$ in $\C^2_z$.
Since we have the parametrization $\tau\colon t\mapsto (t^s,t^r)$ of $X$, using \eqref{lelongtal} 
we get 
$$
\mult_0 X=\int_X M^{(z_1,z_2)}_1 =\int_{\C_t}M^{(t^s,t^r)}_1=\int_{\C_t}M^{t^{\min(s,r)}}_1
=\min(s,r).
$$
This multiplicity is of course well-known, and can be computed in various other ways.
\end{ex}

We will now proceed with some computations of local intersection
numbers. 

\begin{ex}\label{ex1}
Let $X=\{x_2x_1^m-x_3^2=0\}\subset\C^3_{x}$, where $m\geq 1$, and let $A=\{x_2=x_3=0\}$. 
Since $A$ is smooth and contained in $X$, and $X$ is smooth outside the origin in $\C^3$, we must have 
that $A\diamond X(x)=(\mult_x A,0,\ldots,0)$ for $x\neq 0$, cf., \eqref{likhet}.

We shall now  compute the local intersection numbers at $0$.
To this end we consider a generic Vogel sequence of $\J_A$ on $X$ at the origin 
and compute the corresponding Vogel cycle. 
Let $H_1$ be a generic hyperplane that contains $A$, defined by $h_1= \alpha x_2-x_3$. 
Then $X_1=H_1\cdot X$ is the curve
$\{x_2x_1^m-(\alpha x_2)^2=0,\alpha x_2- x_3 =0\}$. 
It follows that $X_1^A$ is equal to $A$, whereas 
$X_1^{Z\setminus A}$ is the curve $\{x_1^m-\alpha^2x_2=0, \alpha x_2-x_3=0\}$. 
Next, let $h_2=\beta x_2-x_3$. Then $X_2=H_2\cdot X_1^{X\setminus A}$ is the cycle 
$\{ x_3=x_2=0,\ x_1^m=0\}$. Since its support is contained in $A$, it is equal
to $X_{2}^A$ and it has order $m$ at the origin. 
We conclude that $V^h=A+m[0]$. Thus
$\epsilon_k(A,X,0)$  is equal to $1$ when $k=\dim A=1$ and $m$ when
$k=0$.

As an illustration, let us also compute $\epsilon_0(A, X,0)=e_2(\J_A,X,0)$ as the Lelong
number of a certain Bochner-Martinelli current. 
Notice that $\tau: (t_1,t_2)\mapsto(t_1^2,t_2^2, t_1^mt_2)$ 
is a surjective, generically $2-1$, mapping $\C^2_t\to X$. 
If $i\colon X \hookrightarrow \C^3$ is the inclusion map we have by Lemma~\ref{listigt} that 
\begin{multline*}
e_2(\J_A,X,0)=\ell_x(M_2^{(x_2,x_3)}\w[X])=
\int_{\C^3}M^{(x_1,x_2,x_3)}_0\w M_2^{(x_2,x_3)}\w [x_2x_1^m-x_3^2]=\\
\int_X M^{(i^*x_1,i^*x_2,i^*x_3)}_0\w M^{(i^*x_2,i^*x_3)}_2=
\frac{1}{2}\int_{\C^2_{t_1,t_2}}M^{(t_1^2,t_2^2,t_1^mt_2)}_0\w  M_2^{(t_2^2,t_1^mt_2)}.
\end{multline*}
According to Theorem~\ref{meanvaluethm}, $M_2^{(t_2^2,t_1^mt_2)}$ is the mean value of all
$$
[(\beta t_2-t_1^m)t_2]\w[(\alpha t_2-t_1^m)t_2]
$$
for generic choices of $\alpha,\beta\in\C$. For generic $\alpha,\beta$,
using  the new  variables $v_1=t_1, v_2=\alpha t_2-t_1^m$,  we get
$$
[\beta t_2-t_1^m]\w[\alpha t_2-t_1^m]=[\beta'v_2-\alpha' v_1^m]\w[v_2]=
[v_1^m]\w[v_2]=m[0]
$$
for some $\alpha',\beta'\in \C$. 
Since $[(\alpha t_2-t_1^m)t_2]=[t_2]+[\alpha t_2-t_1^m]$,
by \eqref{tundra} and \eqref{rakneregel},  we thus have that
$$
[(\beta t_2-t_1^m)t_2]\w[(\alpha t_2-t_1^m)t_2]=
\big([\beta t_2-t_1^m]+[t_2]\big)\w [\alpha t_2-t_1^m]=2m[0].
$$
Now, $M^{(t_1^2,t_2^2,t_1^mt_2)}_0={\bf 1}_{(0,0)}$, so $\epsilon_0(A,X,0)=m$ as expected. 
\end{ex}

The following example is related to Example \ref{hak1} above. 

\begin{ex}\label{hak2}
The mapping $\gamma\colon\C^3_t\to\C^6_z$ defined by
$$
(t_1,t_2,t_3)\mapsto \gamma(t)=(t_1, t_2, t_3t_1, t_3t_2, t_3^2,t_3^3)
$$
is  proper and injective, so that $X:=\gamma(\C^3)$ is a subvariety of $\C^6$. 
Let $A=\{z_3=z_4=z_5=z_6=0\}$. Then $A$ is smooth and contained in $X$ and, since
$X$ is smooth outside $0$, it follows from \eqref{likhet} that
$A\diamond X(x)=(\mult_x A,0,\ldots,0)$
for $x\neq 0$. We want to determine the local intersection numbers
$\epsilon_{3-k}(A,X,0)=e_k(\J_A,X,0)$ at $0$.
Since  $\J_A$ has codimension $1$ in $X$,  
$e_0(0)=0$. Moreover, by King's formula, $M_1^{z_3,z_4,z_5,z_6}$ is a Lelong current on $X$,
and at a point  $x\neq 0$ we know that the Lelong number is $1$ if $x\in A$ and $0$
otherwise. We conclude that  $M_1^{z_3,z_4,z_5,z_6}=[A]$, and hence
$e_1(1)=\mult_0 A=1$.
By Lemma~\ref{listigt},
$$
e_k(A,X,0)=\int_{\C^3_t} M^{\gamma^*z}_{3-k}\w M_k^{\gamma^*(z_3,z_4,z_5,z_6)}=
\int_{\C^3_t}M_{3-k}^{(t_1,t_2,t_3^2)}\w M^{t_3(t_1,t_2,t_3)}_k,
$$
where we have used that 
the ideal $\gamma^* z$ is generated by $t_1,t_2,t_3^2$, that the ideal
$\gamma^*\J_A$ is generated by $t_3(t_1,t_2,t_3)$, and that $e_k(A,X,0)$ only depends on the ideals. 
In light of Example~\ref{hak1} thus,
\begin{multline*}
e_2(A,X,0)=\int_{\C^3_t}M_{1}^{(t_1,t_2,t_3^2)}\w [t_3]\w dd^c\log|t|^2=
\int_{\C^3_t}M_{1}^{(t_1,t_2)}\w [t_3]\w dd^c\log|t|^2 \\
=\int_{\C^2_{(t_1,t_2)}}M_{1}^{t_1,t_2}\w dd^c\log|t'|^2=\ell_0(dd^c\log|t'|^2)=1,
\end{multline*}
where $t'=(t_1,t_2)$.
To see the last equality, in view of Theorem \ref{meanvaluethmlelong}, 
one can replace $dd^c\log|t|^2$ by a 
generic hyperplane $[\alpha\cdot t]$. 
In a similar way one concludes that $e_3(A,X,0)=2$.
\end{ex}



\def\listing#1#2#3#4#5#6{ { #1},\   {\it #2} \   #3
{\bf#4}, \ {#5} \  #6}

\end{document}